\documentclass[12pt,fleqn]{article}
\usepackage{mathtools,amsfonts,amsmath, amsthm,amssymb}
\usepackage{accents}
\usepackage{chngcntr}
\counterwithout{figure}{section}

\def\lab{\label}
\usepackage[active]{srcltx}


\usepackage{mathrsfs} 
\usepackage{csquotes}
\usepackage{latexsym}
\usepackage{esint}

\usepackage[margin=22mm]{geometry} 
\geometry{a4paper}

 \numberwithin{equation}{section}

\usepackage{color}
 \definecolor{db}{rgb}{0.0,0.0,0.8} 
\definecolor{dg}{rgb}{0.0,0.55,0.14}
\definecolor{dr}{rgb}{0.5,0,0.07}

  \usepackage{hyperref}
 \hypersetup{frenchlinks=true,colorlinks=true,linkcolor=db,citecolor=dg,
 bookmarks=true}

 \usepackage{enumerate}
\usepackage{authblk}

\newtheorem{theorem}{Theorem}[section]
\newtheorem{proposition}[theorem]{Proposition}
\newtheorem*{theorem*}{Theorem}

\newtheorem*{claim*}{Claim}
\newtheorem{lemma}[theorem]{Lemma}
\newtheorem{corollary}[theorem]{Corollary}
\theoremstyle{definition}

\theoremstyle{definition}

\theoremstyle{definition}

\theoremstyle{definition}

\theoremstyle{definition}

\theoremstyle{definition}
\newtheorem{remark}[theorem]{Remark}
\theoremstyle{definition}

\newtheorem{open-problem}{Open Problem}
\newcounter{step}

\newcommand{\rlemma}[1]{Lemma~\ref{#1}}
\newcommand{\rth}[1]{Theorem~\ref{#1}}
\newcommand{\rcor}[1]{Corollary~\ref{#1}}

\newcommand{\rprop}[1]{Proposition~\ref{#1}}


%
%
%

\def\be{\begin{equation}}
\def\ee{\end{equation}}
\def\bes{\begin{equation*}}
\def\ees{\end{equation*}}
\def\bt{\begin{theorem}}
\def\et{\end{theorem}}
\def\bpr{\begin{proposition}}
\def\epr{\end{proposition}}
\def\bl{\begin{lemma}}
\def\el{\end{lemma}}
\def\bc{\begin{corollary}}
\def\ec{\end{corollary}}
\def\br{\begin{remark}}
\def\er{\end{remark}}
\def\ben{\begin{enumerate}}
\def\bena{\begin{enumerate}[a)]}
\def\een{\end{enumerate}}
\def\bit{\begin{itemize}}
\def\iit{\end{itemize}}


\def\dist{\operatorname{dist}}
\def\Dist{\operatorname{Dist}}

\def\curl{\operatorname{curl}}
\def\div{\operatorname{div}}
\def\deg{\operatorname{deg}}

 \newcommand{\Prod}{\mathop{\prod}\limits}
\DeclareMathAlphabet{\mathonebb}{U}{bbold}{m}{n}

\def\R{{\mathbb R}}

\def\C{{\mathbb C}}
\def\Z{{\mathbb Z}}


\usepackage{csquotes}

\def\fo{\forall\, }

\def\va{\varphi}

\def\d{\displaystyle}
\def\im{\imath}
\def\ve{\varepsilon}

\def\na{\nabla}

\def\so{{\mathbb S}^1}
\def\st{{\mathbb S}^2}

\def\woo{W^{1,1}}
\def\wop{W^{1,p}}

\renewcommand{\vec}[1]{\boldsymbol{#1}}
\newcommand{\wtu}{{\widetilde u}}
\newcommand{\wtw}{{\widetilde w}}
\newcommand*{\dt}[1]{%
  \accentset{\mbox{\large\bfseries .}}{#1}}
\newcommand\blfootnote[1]{%
  \begingroup
  \renewcommand\thefootnote{}\footnote{#1}%
  \addtocounter{footnote}{-1}%
  \endgroup
}

 
\title{Distances between classes in $W^{1,1}(\Omega;{\mathbb S}^1)$}
\author[1,3]{Haim Brezis}
\author[2]{Petru Mironescu}
\author[3]{Itai Shafrir}
\affil[1]{Department of Mathematics, Rutgers
    University,  USA}
\affil[2]{Universit\'e de Lyon,  Universit\'e Lyon 1, CNRS UMR 5208,  Institut Camille Jordan,  69622 Villeurbanne, France}
\affil[3]{Department of Mathematics, Technion - I.I.T., 32 000 Haifa, Israel}



\begin{document}

\maketitle
\begin{abstract}
We introduce an equivalence relation on the space
$W^{1,1}(\Omega;{\mathbb S}^1)$ which classifies maps 
according to their
\enquote{topological singularities}.
We establish sharp bounds for the distances (in the usual sense and in
the Hausdorff sense) between the equivalence classes. Similar
questions are examined 
for the space $W^{1,p}(\Omega;{\mathbb S}^1)$ when $p>1$. 
\blfootnote{\emph{2010 Mathematics Subject Classification.} Primary
  58D15; Secondary  46E35.}
\end{abstract}

\section{Introduction}
\label{sec:Omega}
 Let $\Omega$ be a smooth bounded domain in
 $\R^N$, $N\geq2$. (Many of the results in this paper remain valid if
 $\Omega$ is replaced by a manifold $\mathcal{M}$, with or without
 boundary, and the case $\mathcal{M}=\so$ is already of interest
 (see \cite{bms2,bmsCRAS}).) In
 some places we will assume in addition that $\Omega$ is simply
 connected (and this will be mentioned explicitly). Our basic setting
 is 
\bes
\woo(\Omega;\so)=\{u\in\woo(\Omega;\R^2)\simeq
\woo(\Omega;\C);\,|u|=1\text{ a.e.}\}.
\ees
It is clear that if $u,v\in\woo(\Omega;\so)$ then
$uv\in\woo(\Omega;\so)$; moreover
\begin{equation}
  \label{eq:X.1}
\text{if $u_n\to u$ and $v_n\to v$ in $\woo(\Omega;\so)$ then
  $u_nv_n\to uv$ in $\woo(\Omega;\so)$.}
\end{equation}
In particular, $\woo(\Omega;\so)$ is a topological group.
 We call the attention of the reader that maps $u$ of the form $u=e^{\im\varphi}$
with $\varphi\in\woo(\Omega;\R)$ belong to $\woo(\Omega;\so)$. However
they do not exhaust $\woo(\Omega;\so)$: there exist maps in
$\woo(\Omega;\so)$ which {\it cannot} be written as $u=e^{\im\varphi}$
for some $\varphi\in\woo(\Omega;\R)$. A typical example is the map
$u(x)=x/|x|$ in $\Omega=$unit disc in $\R^2$; This was originally
observed in \cite{bethuelzheng} (with roots in \cite{su}) and is based
on degree theory; see also \cite{bbm-lifting,bm}. 
Set 
\begin{equation}
  \label{eq:E}
  {\mathcal E}=\{u\in W^{1,1}(\Omega;{\mathbb S}^1);\,u=e^{\im \varphi}\text{ for some }
 \varphi\in W^{1,1}(\Omega;\R)\}.
\end{equation}

We claim that $\mathcal{E}$ is closed in $W^{1,1}(\Omega;{\mathbb
  S}^1)$. Indeed, let $u_n=e^{\im\varphi_n}$ with $u_n\to u$ in
$W^{1,1}$.
Then $\nabla\varphi_n=-\im{\overline u}_n\nabla u_n$ converges in $L^1$ to $-\im{\overline u}\nabla u$. 
By adding an integer multiple of $2\pi$ to $\varphi_n$ we may assume
that $\big|\int_\Omega\varphi_n\big|\leq 2\pi|\Omega|$. Thus,
a subsequence of  $\{\varphi_n\}$ converges in $W^{1,1}$ to some
$\varphi$ and $u=e^{\im\varphi}$.

Clearly
\begin{equation}
  \label{eq:X1}
  \mathcal{E}\subset \overline{C^\infty(\overline{\Omega};\so)}^{\woo}.
\end{equation}
 Indeed, if $u\in\mathcal{E}$, write $u=e^{\im\varphi}$ for some
 $\varphi\in W^{1,1}(\Omega;\R)$; let $\varphi_n\in
 C^\infty(\overline{\Omega};\R)$ be such that $\varphi_n\to\varphi$ in
 $W^{1,1}$. Then, $u_n=e^{\im\varphi_n}\in
 C^\infty(\overline{\Omega};\so)$ and converges to $u$ in $W^{1,1}$.
However equality in \eqref{eq:X1} fails in general. For example when
$\Omega=\{x\in\R^2;\,1<|x|<2\}$, the map $u(x)=x/|x|$ is smooth, but
$u\notin \mathcal{E}$; as above the
nonexistence  of $\varphi$ is an easy consequence of degree theory.
On the other hand, if $\Omega$ is simply connected, equality in
\eqref{eq:X1} does hold since
$C^\infty(\overline{\Omega};\so)\subset\mathcal{E}$ (recall that any
$u\in C^\infty(\overline{\Omega};\so)$ can be written as
$u=e^{\im\varphi}$ with $\varphi\in C^\infty(\overline{\Omega};\R)$) and
$\mathcal{E}$ is closed in $\woo(\Omega;\so)$).

To each $u\in\woo(\Omega;\so)$ we associate a number $\Sigma(u)\geq0$
 defined by 
\begin{equation}
  \label{eq:1*}
  \Sigma(u)=\inf_{\varphi\in W^{1,1}(\Omega;\R)} \int_\Omega |\nabla
  (ue^{-\im\varphi})|=\inf_{v\in\mathcal{E}} \int_\Omega |\nabla
  (u{\overline v})|.
\end{equation}
As explained in Section~\ref{sec:comments} the quantity $\Sigma(u)$
plays an extremely important role in many questions involving
$\woo(\Omega;\so)$; it has also an interesting geometric
interpretation. Note that 
\begin{equation}
\label{eq:120}
 u\in\mathcal{E} \Longleftrightarrow \Sigma(u)=0.
\end{equation}
The implication $\Longrightarrow$ is clear. For the reverse
implication, assume that $\Sigma(u)=0$, i.e., there exists a sequence
$v_n\in\mathcal{E}$ such that $\int_\Omega|\nabla(u\overline{v}_n)|\to
0$. Then (modulo a subsequence) $u\overline{v}_n\to C$ in $\woo$,  for some
constant $C\in\so$; therefore $v_n\to \overline{C}u$ in $\woo$ and thus
$u\in\mathcal{E}$ (since $\mathcal{E}$ is closed). 
\par 
 In some sense $\Sigma(u)$ measures how much a general
 $u\in\woo(\Omega;\so)$ ``deviates'' from $\mathcal{E}$. More
 precisely we will prove that 
 \begin{equation}
\label{eq:118}
  \frac{2}{\pi}\, \Sigma(u)\leq
   \inf_{v\in\mathcal{E}}\int_\Omega|\nabla(u-v)|\leq \Sigma(u),
 \end{equation}
 with optimal constants. This will be derived as a very special case
 of our main result \rth{th:main}.
 In order to state it we need to describe
 a decomposition of the space $\woo(\Omega;\so)$ according to the
following  equivalence relation in 
 $\woo(\Omega;\so)$:
 \begin{equation}
   \label{eq:equiv}
  u\sim v \text{ if and only if }u=e^{\im \varphi}\, v\text{ for some }
 \varphi\in W^{1,1}(\Omega;\R);
 \end{equation}
in other words, $u\sim v$ if and only if $\Sigma(u\overline{v})=0$.
We denote
 by ${\mathcal E}(u)$ the equivalence class of an element $u\in
 W^{1,1}(\Omega;{\mathbb S}^1)$, that is
 \begin{equation*}
   {\mathcal E}(u)=\{ue^{-\im\varphi};\,\varphi\in W^{1,1}(\Omega;\R)\}.
 \end{equation*}
In particular,
 $\mathcal{E}(1)=\mathcal{E}$.
It is easy to see that for every $u\in\woo(\Omega;\so)$,
$\mathcal{E}(u)$ is closed (it suffices to apply \eqref{eq:X.1} and
the fact that $\mathcal{E}$ is closed). In Section~\ref{sec:comments} we
will give an interpretation of the equivalence relation $u\sim v$ in
terms of the ``topological singularities'' of $u$ and $v$. We may rewrite \eqref{eq:1*} as 
\begin{equation}
  \label{eq:1**}
  \Sigma(u)=\inf_{v\in\mathcal{E}(u)} \int_\Omega |\nabla v|.
\end{equation}

Given $u_0,v_0\in W^{1,1}(\Omega;{\mathbb S}^1)$ the following
quantities will play a crucial role throughout the paper:
\begin{align}
\label{eq:81}
 d_{\woo}(u_0,\mathcal{E}(v_0))&:=\inf_{v\sim v_0} \int_\Omega
  |\nabla (u_0-v)|,\\
\label{eq:10}
   \dist_{W^{1,1}} ({\cal E}(u_0),{\cal E}(v_0))&:=\inf_{u\sim u_0} d_{\woo}(u, {\cal E}({v_0}))
   =\inf_{u\sim u_0}\, \inf_{v\sim v_0} \int_\Omega |\nabla(u-v)|,
   \\
   \label{eq:11}
   \Dist_{W^{1,1}}({\cal E}(u_0),{\cal E}(v_0))&:=\sup_{u\sim u_0} d_{\woo}(u, {\cal E}({v_0}))
   =\sup_{u\sim u_0}\, \inf_{v\sim v_0} \int_\Omega |\nabla(u-v)|,
     \end{align}
so that $\dist_{W^{1,1}} ({\cal E}(u_0),{\cal E}(v_0))$ is precisely
the distance between the classes $\mathcal{E}(u_0)$ and
$\mathcal{E}(v_0)$. On the other hand we will see below, as a
consequence of 
\eqref{eq:19b}, that
$\Dist_{\woo}$ is symmetric, a fact which is not clear from its
definition. This implies that $\Dist_{W^{1,1}}$ coincides with the Hausdorff distance 
 \bes
  H-\dist_{W^{1,1}}({\cal E}(u_0),{\cal E}(v_0)):=\max\left(\Dist_{W^{1,1}}({{\cal E}(u_0)}, {{\cal E}(v_0)}),\, \Dist_{W^{1,1}}({{\cal E}(v_0)}, {{\cal E}(u_0)})\right)
 \ees
between ${{\cal E}(u_0)}$ and ${{\cal E}(v_0)}$.
Our  main result is
 \begin{theorem}
   \label{th:main}
 For every  $u_0,v_0\in W^{1,1}(\Omega;{\mathbb S}^1)$ we have
 \begin{equation}
   \label{eq:19a}
 \dist_{W^{1,1}} ({\cal E}(u_0),{\cal E}(v_0))=
 \frac{2}{\pi}\Sigma(u_0{\overline v}_0)
\end{equation}
and
\be
\lab{eq:19b}
 \Dist_{W^{1,1}} ({\cal E}(u_0),{\cal E}(v_0))= \Sigma(u_0{\overline v}_0).
 \ee
 \end{theorem}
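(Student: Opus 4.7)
The cornerstone is a pointwise identity valid for any $u,v\in\woo(\Omega;\so)$:
\begin{equation*}
|\nabla(u-v)|^2 = \Bigl(\tfrac{|u+v|}{2}\Bigr)^{\!2}|\nabla(u\bar v)|^2 + \Bigl(\tfrac{|u-v|}{2}\Bigr)^{\!2}|\nabla(uv)|^2, \qquad (\ast)
\end{equation*}
which I would derive from local lifts $u=e^{\im\varphi}$, $v=e^{\im\psi}$ via the factorization $u-v=2\im\sin((\varphi-\psi)/2)\,e^{\im(\varphi+\psi)/2}$; the two weights equal $|\cos(\theta/2)|$ and $|\sin(\theta/2)|$ with $u\bar v=e^{\im\theta}$, and are globally well-defined on $\so$ (e.g.\ $|\cos(\theta/2)|^2=(1+\mathrm{Re}(u\bar v))/2$). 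Setting $w:=u\bar v\in\mathcal{E}(u_0\bar v_0)$ and $\tilde w:=uv\in\mathcal{E}(u_0v_0)$, the pair $(w,\tilde w)$ varies essentially independently as $(u,v)$ ranges over $\mathcal{E}(u_0)\times\mathcal{E}(v_0)$; the compatibility $u^2=w\tilde w$ is managed by density and square roots.

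For the lower bound in \eqref{eq:19a}, the pointwise estimate $|\nabla(u-v)|\ge|\cos(\theta/2)||\nabla w|=|\nabla|u-v||$ combined with the $BV$-coarea formula applied to the scalar $|u-v|=|w-1|$ gives
\begin{equation*}
\int_\Omega|\nabla(u-v)|\ge\int_0^2\bigl[\mathcal{H}^{N-1}(w^{-1}(e^{\im\theta_s}))+\mathcal{H}^{N-1}(w^{-1}(e^{-\im\theta_s}))\bigr]\,ds,\qquad \theta_s=2\arcsin(s/2),
\end{equation*}
which, via the uniform minimal-connection estimate $\mathcal{H}^{N-1}(w^{-1}(\zeta))\ge L$ a.e.\ (with $\Sigma(u_0\bar v_0)=2\pi L$), yields $\int|\nabla(u-v)|\ge 4L=\tfrac{2}{\pi}\Sigma(u_0\bar v_0)$. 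The matching upper bound is constructive: take a classical minimal-connection near-minimizer $w_n\sim u_0\bar v_0$ --- $w_n\equiv 1$ off a thin tube $T_n$ of the minimal connection, with $\theta_n=\arg w_n$ sweeping $[0,2\pi]$ across $T_n$ --- and (by density) a companion $\tilde w_n\sim u_0v_0$ whose gradient support is disjoint from $T_n$, so $|\sin(\theta_n/2)|\equiv 0$ on $\{|\nabla\tilde w_n|\neq 0\}$. Reconstructing $(u_n,v_n)$ from $u_n^2=w_n\tilde w_n$, $(\ast)$ yields $|\nabla(u_n-v_n)|=|\cos(\theta_n/2)||\nabla w_n|$ on $T_n$ and $0$ elsewhere; the elementary identity $\int_0^{2\pi}|\cos(\theta/2)|\,d\theta=4=\tfrac{2}{\pi}(2\pi)$ together with $\mathcal{H}^{N-1}(w_n^{-1}(\zeta))\to L$ produces $\int|\nabla(u_n-v_n)|\to\tfrac{2}{\pi}\Sigma(u_0\bar v_0)$.

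The inequalities \eqref{eq:19b} follow by analogous arguments in which the weight $|\cos(\theta/2)|$ is effectively replaced by $1$: for $\Dist\le\Sigma(u_0\bar v_0)$, the same support-disjointness construction gives $|\nabla(u-v)|\le|\nabla w|$; for the reverse inequality, one exhibits a specific $u\sim u_0$ (one with concentrated topological structure) for which every $v\sim v_0$ must pay the full topological cost $\Sigma(u_0\bar v_0)$ to unwind the singularities. The principal difficulty is the sharp constant $\tfrac{2}{\pi}$ in the $\dist$ lower bound: since $|\cos(\theta/2)|$ can vanish, no pointwise estimate suffices, and the argument requires both the $BV$-coarea identity and the uniform minimal-connection bound on the fibers $w^{-1}(\zeta)$, invoking the topological/geometric machinery for $\so$-valued maps.
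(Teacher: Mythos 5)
Your pointwise identity $(\ast)$ is correct (local lifting $u=e^{\im\varphi}$, $v=e^{\im\psi}$ gives $|\nabla(u-v)|^2=\cos^2\tfrac{\varphi-\psi}{2}\,|\nabla(\varphi-\psi)|^2+\sin^2\tfrac{\varphi-\psi}{2}\,|\nabla(\varphi+\psi)|^2$, and the weights $(1\pm\mathrm{Re}(u\bar v))/2$ are globally defined), and the change of variables $(u,v)\leftrightarrow(w,\widetilde w)$ is a genuine bijection between $\mathcal{E}(u_0)\times\mathcal{E}(v_0)$ and $\mathcal{E}(u_0\bar v_0)\times\mathcal{E}(u_0 v_0)$ — write $w=u_0\bar v_0 e^{-\im\alpha}$, $\widetilde w=u_0 v_0 e^{-\im\beta}$ and set $u=u_0 e^{-\im(\alpha+\beta)/2}$, $v=v_0 e^{-\im(\beta-\alpha)/2}$. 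But the way you exploit this is where the gaps appear.

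For the lower bound in \eqref{eq:19a} your argument really only uses $|\nabla(u-v)|\ge\big|\nabla|u-v|\big|=\big|\nabla|w-1|\big|$ and then tries to bound $\int_\Omega\big|\nabla|w-1|\big|$ by coarea plus the two facts $\Sigma(u_0\bar v_0)=2\pi L$ and $\mathcal{H}^{N-1}(w^{-1}(\zeta))\ge L$ for a.e.\ $\zeta$. Neither is available: the identity $\Sigma=2\pi L$ is known only for $N=2$, $\Omega$ simply connected, and $w\in\mathcal{R}$ (it is an \emph{interpretation}, derived in \S2.2 by duality, not an input), while the a.e.\ fiber estimate is an additional topological claim you never prove — and the theorem is stated for a general smooth bounded $\Omega\subset\R^N$. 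The paper avoids all of this with a purely target-space argument (Lemma~\ref{lem:bw}): the degree-one Lipschitz map $T(e^{\im\varphi})=e^{\im\pi\sin(\varphi/2)}$ satisfies $\big|\nabla|w-1|\big|=\tfrac{2}{\pi}|\nabla(T\circ w)|$ pointwise, $T\circ w\sim w$ by Lemma~\ref{lem:T}, and so $\int\big|\nabla|w-1|\big|=\tfrac{2}{\pi}\int|\nabla(T\circ w)|\ge\tfrac{2}{\pi}\Sigma(w)$. No coarea, no fibers, no minimal connections, and it holds in any dimension and any $\Omega$. For the upper bound in \eqref{eq:19a} your construction needs the dipole fibers $\mathcal{H}^{N-1}(w_n^{-1}(\zeta))$ to converge to a $\zeta$-independent constant, which the dipole construction of Proposition~\ref{prop:dipole} gives you no reason to expect. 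The paper's extra device is Proposition~\ref{prop:uv}, the averaging identity $\int_{\so}\int_\Omega|\nabla(w-P_\zeta\circ w)|\,dx\,d\zeta=4\int_\Omega|\nabla w|$ over the degree-zero folding maps $P_\zeta$ (so $P_\zeta\circ w\in\mathcal{E}(1)$); one then selects a good $\zeta_n$ and passes to the limit. That is the mechanism that produces the constant $2/\pi$ without any fiber uniformity.

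For \eqref{eq:19b} your sketch is too thin. The upper bound $\Dist\le\Sigma(u_0\bar v_0)$ is \emph{not} proved by a "support-disjointness" construction — you do not get to pick $u$: given an arbitrary $u\sim u_0$ one takes the dipole $w_n\sim u_0\bar v_0$ with $w_n\to 1$ a.e.\ and $\int|\nabla w_n|\to\Sigma$, sets $v_n:=u\bar w_n\sim v_0$, and estimates $\int|\nabla(u-v_n)|\le\int|1-w_n|\,|\nabla u|+\int|\nabla w_n|\to\Sigma$ (Proposition~\ref{prop:upper}). The lower bound $\Dist\ge\Sigma$ is the genuinely difficult half of the theorem and is entirely missing from your proposal beyond "exhibit a $u$ with concentrated structure"; the paper does this by introducing the rapidly oscillating degree-one maps $T_n$ of \eqref{eq:taun}, proving the geometric pointwise Lemma~\ref{lem:ptwise}, and establishing the quantitative Lemma~\ref{lem:main} which shows that for $\wtu=T_n\circ u_0$ close to $u_0$, every $v\sim v_0$ satisfies $\int_{A_\varepsilon}|\nabla(v-\wtu)|\ge(1-6\varepsilon)\Sigma(u_0\bar v_0)-2\int_{A_\varepsilon}|\nabla u_0|$, from which Proposition~\ref{prop:X} follows by a careful choice of $n$ and $\varepsilon$. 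None of this is recoverable from the outline you give.
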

 The two assertions in \rth{th:main} look very simple but the proofs  are
 quite tricky; they are presented in Sections~\ref{sec:lb} and
 \ref{sec:19b}.
\par A useful device for constructing maps in the same equivalence class is
the following (see \rlemma{lem:T} below).  Let
     $T\in\text{Lip}(\so;\so)$ be a map of degree one. Then
     \begin{equation}
       \label{eq:X2}
   T\circ u\sim u,\  \forall\, u\in\woo(\Omega;\so).
     \end{equation}
 It turns out that this simple device plays a very significant
 role in the proofs of most of our main results. It allows us to work on
 the target space only,  thus avoiding difficulties due to the possibly
 complicated geometry and/or topology of the domain (or manifold)
 $\Omega$. A first example of an application of this technique is given
 by the proof of the following version of the \enquote{dipole
 construction}; it is the main ingredient in the proof of
 inequality \enquote{$\leq$} in \eqref{eq:19b}.
 \begin{proposition}(H.~Brezis and P.~Mironescu~\cite[Proposition~2.1]{bm})
   \label{prop:dipole}
Let $u\in\woo(\Omega;{\mathbb S}^1)$. Then there exists a sequence $\{u_n\}\subset\mathcal{E}(u)$ satisfying
\begin{equation}
  \label{eq:dipole}
u_n\to 1\text{ a.e., and} \lim_{n\to\infty}\int_\Omega|\nabla u_n|=\Sigma(u).
\end{equation}
\end{proposition}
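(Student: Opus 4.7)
The plan is to exploit the device $T\circ v\sim v$ from \eqref{eq:X2} with a carefully chosen degree--one Lipschitz map $T$ which collapses most of $\so$ to the constant value $1$. Given $\eta>0$, start from $v\in\mathcal{E}(u)$ with $\int_\Omega|\nabla v|\leq\Sigma(u)+\eta$, which exists by \eqref{eq:1**}. For $z_0\in\so$ and $\epsilon>0$, let $A_\epsilon(z_0)$ be the arc of $\so$ of arclength $2\epsilon$ centered at $z_0$, and define $T_\epsilon\colon\so\to\so$ to be the Lipschitz map that equals $1$ outside $A_\epsilon(z_0)$ and wraps once around $\so$ with constant speed $\pi/\epsilon$ on $A_\epsilon(z_0)$. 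Then $T_\epsilon$ has degree $1$, so \eqref{eq:X2} gives $v_\epsilon:=T_\epsilon\circ v\in\mathcal{E}(v)=\mathcal{E}(u)$; moreover $v_\epsilon(x)=1$ whenever $v(x)\notin A_\epsilon(z_0)$.

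The heart of the argument is the computation of $\int_\Omega|\nabla v_\epsilon|$ via the chain rule together with a coarea formula for $\so$--valued $W^{1,1}$ maps (which follows from scalar coarea applied to local $W^{1,1}$ liftings $v=e^{\im\theta}$ on simply connected patches covering $\Omega$):
\begin{equation*}
\int_\Omega|\nabla v_\epsilon|
=\int_\Omega|T_\epsilon'(v)|\,|\nabla v|\,dx
=\int_{\so}|T_\epsilon'(z)|\,\mathcal{H}^{N-1}(v^{-1}(z))\,d\ell(z)
=\frac{\pi}{\epsilon}\int_{A_\epsilon(z_0)}h(z)\,d\ell(z),
\end{equation*}
where $h(z):=\mathcal{H}^{N-1}(v^{-1}(z))$. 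By the same coarea formula, $h\in L^1(\so)$ with $\int_{\so}h\,d\ell=\int_\Omega|\nabla v|$, so for every Lebesgue point $z_0$ of $h$ the Lebesgue differentiation theorem yields $\lim_{\epsilon\to 0}\int_\Omega|\nabla v_\epsilon|=2\pi\,h(z_0)$.

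To close the proof I would choose $z_0$ meeting three generic conditions simultaneously: (i) $z_0$ is a Lebesgue point of $h$; (ii) $\mathcal{L}^N(v^{-1}(z_0))=0$, which fails for only countably many $z_0$; and (iii) $h(z_0)\leq\frac{1}{2\pi}\int_\Omega|\nabla v|$, which holds on a subset of $\so$ of positive arclength by averaging. Condition (ii) implies $v_\epsilon\to 1$ a.e.\ as $\epsilon\to 0$, while (iii) together with the limit above yields $\limsup_{\epsilon\to 0}\int_\Omega|\nabla v_\epsilon|\leq\int_\Omega|\nabla v|\leq\Sigma(u)+\eta$; the reverse inequality $\int_\Omega|\nabla v_\epsilon|\geq\Sigma(u)$ is automatic from $v_\epsilon\in\mathcal{E}(u)$ and \eqref{eq:1**}. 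A standard diagonal extraction, taking $\eta_n=1/n$ and $\epsilon_n\to 0$ fast enough to also force $\|v_\epsilon-1\|_{L^1}\to 0$ (hence a.e.\ convergence along a subsequence), then delivers the sought sequence $\{u_n\}\subset\mathcal{E}(u)$. The main technical obstacle is justifying the $\so$--valued coarea formula in the required generality; with that in hand the rest is a clean combination of the $T\circ v\sim v$ trick with Lebesgue differentiation and an averaging argument on the target circle.
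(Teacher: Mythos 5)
Your proposal is essentially correct and rests on the same key device as the paper's proof: compose $v\in\mathcal{E}(u)$ with a degree-one Lipschitz map that collapses most of $\so$ onto the constant $1$ and wraps once over a small arc, then average over the location of that arc to find a good base point. The difference lies in how the averaging is implemented. You use a coarea formula for $\so$-valued $W^{1,1}$ maps to write $\int_\Omega|\nabla v_\epsilon|=\frac{\pi}{\epsilon}\int_{A_\epsilon(z_0)}h$, with $h(z)=\mathcal{H}^{N-1}(v^{-1}(z))$, then pick a Lebesgue point $z_0$ of $h$ with $h(z_0)\leq\frac{1}{2\pi}\int_\Omega|\nabla v|$ and $\mathcal{L}^N(v^{-1}(z_0))=0$. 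The paper instead never introduces $h$: it directly integrates $\int_\Omega|\nabla w_{n,\theta}|$ and $|\{w_{n,\theta}\neq1\}|$ in $\theta$ over $[0,2\pi)$ using Fubini (the inner $\theta$-integral of the characteristic of the arc is just the arc length $\varepsilon_n$), obtaining
\begin{equation*}
\int_0^{2\pi}\int_\Omega|\nabla w_{n,\theta}|\,dx\,d\theta=2\pi\int_\Omega|\nabla v_n|,\qquad \int_0^{2\pi}|\{w_{n,\theta}\neq1\}|\,d\theta=\varepsilon_n|\Omega|,
\end{equation*}
and then selects a single $\theta_n$ making a weighted sum of the two quantities small, which simultaneously controls the energy and the measure of the support, hence gives $u_n\to 1$ a.e.\ as well. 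The paper's Fubini version is the integrated form of your Lebesgue-point argument and sidesteps the coarea formula entirely, which is the one step in your proof that genuinely needs extra work: for $\so$-valued $W^{1,1}$ maps you must reduce to scalar coarea through local liftings and a covering argument, and even scalar coarea for $W^{1,1}$ (as opposed to Lipschitz) requires the Federer/Fleming--Rishel version with $\mathcal{H}^{N-1}$ replaced by perimeter of level sets for a.e.\ value. So: same idea, but the paper's execution is more elementary; yours is correct provided you supply the coarea lemma you flag as the main technical obstacle, and it is somewhat more work to make airtight.
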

 For completeness we present the proof of \rprop{prop:dipole}
 in the Appendix.
\vskip 2mm
A basic ingredient in
 the proof of inequality \enquote{$\geq$} in \eqref{eq:19b} is the following proposition which  provides an explicit recipe for
constructing  
\enquote{maximizing sequences} for $\Dist_{\woo}$. In order to describe it 
we first introduce, for each $n\geq 3$, a map 
$T_n\in\text{Lip}(\so;\so)$ with $\deg T_n=1$ by $T_n(e^{\im \theta})=e^{\im
  \tau_n(\theta)}$, with $\tau_n$ defined on $[0,2\pi]$ by setting $\tau_n(0)=0$ and 
\begin{equation}
\label{eq:taun}
  \tau_n'(\theta)= \begin{cases} n,&
  \theta\in(2j\, \pi/n^2, (2j+1)\, \pi/n^2]\\
             -(n-2),&
             \theta\in      ((2j+1)\, \pi/n^2, (2j+2)\, \pi/n^2]
  \end{cases},\ j=0,1,\ldots,n^2-1. 
\end{equation}
\begin{proposition}
  \label{prop:X}
For every $u_0,v_0\in\woo(\Omega;\so)$ such that  $u_0\not\sim v_0$ we have
\be
\label{eq:limnuv}
\lim_{n\to\infty} \frac{d_{\woo}(T_n\circ
  u_0,\mathcal{E}(v_0))}{\Sigma(u_0{\overline v}_0)}=1
\ee
 and the limit is {\em uniform} over all such $u_0$ and
 $v_0$. Consequently
 \begin{equation}
\label{eq:X3}
   \Dist_{\woo}(\mathcal{E}(u_0),\mathcal{E}(v_0))\geq \Sigma(u_0{\overline v}_0).
 \end{equation}
\end{proposition}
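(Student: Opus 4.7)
The plan is to handle separately the upper bound $\limsup \leq 1$ in \eqref{eq:limnuv}, the lower bound $\liminf \geq 1$, and the consequence \eqref{eq:X3}. The upper bound is the easy direction. Since $\deg T_n = 1$, \eqref{eq:X2} gives $T_n \circ u_0 \sim u_0$ and hence $\Sigma((T_n \circ u_0)\bar v_0) = \Sigma(u_0 \bar v_0)$. First I would establish the general inequality $d_{\woo}(f, \mathcal{E}(v_0)) \leq \Sigma(f \bar v_0)$ valid for every $f \in \woo(\Omega;\so)$, which is a direct application of \rprop{prop:dipole} to $f\bar v_0$: one obtains $g_k \sim f\bar v_0$ with $g_k \to 1$ a.e.\ and $\int|\nabla g_k| \to \Sigma(f\bar v_0)$, uses the competitors $v_k := \bar g_k f$ (which lie in $\mathcal{E}(v_0)$, since $v_k \bar v_0 = \bar g_k(f\bar v_0) \sim 1$), and expands $\nabla(f - v_k) = \nabla f\,(1 - \bar g_k) - f\,\nabla \bar g_k$; dominated convergence kills the first piece. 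Taking $f = T_n \circ u_0$ yields $d_{\woo}(T_n \circ u_0, \mathcal{E}(v_0)) \leq \Sigma(u_0 \bar v_0)$ for every $n, u_0, v_0$.

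The lower bound is the main obstacle. For any competitor $v \sim v_0$, the starting point is the pointwise identity---valid on any set where $w = T_n \circ u_0$ and $v$ admit local lifts $w = e^{\im\alpha}$, $v = e^{\im\beta}$---
\[
|\nabla(w - v)|^2 = \cos^2\!\bigl((\alpha-\beta)/2\bigr)\,|\nabla(\alpha-\beta)|^2 + \sin^2\!\bigl((\alpha-\beta)/2\bigr)\,|\nabla(\alpha+\beta)|^2,
\]
which is a straightforward computation after writing $w - v = 2\im\, e^{\im(\alpha+\beta)/2}\sin((\alpha-\beta)/2)$. Since the two cross-terms in each component are orthogonal in $\C$, one deduces the gauge-invariant consequence $|\nabla(w - v)| \geq |\cos(\theta/2)|\,|\nabla g|$, where $g := w \bar v$ and $\theta := \arg g$ locally. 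As $g \sim u_0 \bar v_0$, integrating reduces matters to proving
\[
\int_\Omega |\cos(\theta/2)|\,|\nabla g| \geq (1 - \ve_n)\,\Sigma(u_0 \bar v_0),
\]
with $\ve_n \to 0$ uniformly over all $v$, $u_0$, $v_0$. The uniform quantitative estimate is expected to exploit the specific structure of $\tau_n$: on the scale $2\pi/n^2$ of a single oscillation period, $\tau_n$ varies by only $O(1/n)$ while $|\tau_n'|$ stays of order $n$, so the weight $|\cos(\theta/2)|$ is essentially constant on each period while $|\nabla g|$ is forced up by the rapidly alternating sign of $\tau_n'$. I would then attempt a cell-by-cell analysis, splitting $\Omega$ according to the preimages under a local lift $\varphi_0$ of $u_0$ of intervals of length $2\pi/n^2$, reducing matters on each cell to a quasi-one-dimensional calculation that trades the weaker factor $2/\pi$ coming from the raw pointwise bound for the sharp factor $1$, via an averaging/cancellation argument. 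This last step is where essentially all of the technical work lies.

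The consequence \eqref{eq:X3} is then immediate: since $T_n \circ u_0 \in \mathcal{E}(u_0)$, the definition of $\Dist_{\woo}$ gives $\Dist_{\woo}(\mathcal{E}(u_0),\mathcal{E}(v_0)) \geq d_{\woo}(T_n \circ u_0, \mathcal{E}(v_0))$ for every $n$, and passing to the limit via \eqref{eq:limnuv} yields \eqref{eq:X3}.
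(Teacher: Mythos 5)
Your upper bound argument is correct (and is essentially \rprop{prop:upper} of the paper), and the deduction of \eqref{eq:X3} from \eqref{eq:limnuv} is fine. The lower-bound plan, however, collapses at the very step you flag as \enquote{where essentially all of the technical work lies}. The pointwise identity you write is correct, but after dropping the second term $\sin^2\bigl((\alpha-\beta)/2\bigr)|\nabla(\alpha+\beta)|^2$ you are left with $|\nabla(w-v)|\geq|\cos(\theta/2)||\nabla g|$, which is nothing but the elementary bound $|\nabla(w-v)|\geq|\nabla(|w-v|)|$ (indeed $|w-v|=|g-1|=2|\sin(\theta/2)|$, so $|\nabla(|w-v|)|=|\cos(\theta/2)||\nabla g|$). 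Your target inequality $\int_\Omega|\cos(\theta/2)||\nabla g|\geq(1-\ve_n)\Sigma(u_0\bar v_0)$, uniformly over $v\sim v_0$, is therefore equivalent to $\inf_{g\sim u_0\bar v_0}\int_\Omega|\nabla(|g-1|)|\geq(1-\ve_n)\Sigma(u_0\bar v_0)$; note that as $v$ ranges over $\mathcal{E}(v_0)$, $g=(T_n\circ u_0)\bar v$ ranges over all of $\mathcal{E}(u_0\bar v_0)$, regardless of $n$. But this infimum equals $\frac{2}{\pi}\Sigma(u_0\bar v_0)$ for every $n$ (this is the $p=1$ case of \rlemma{lem:rs}), so your target inequality is false for any $\ve_n<1-2/\pi$; concretely, $v:=(T_n\circ u_0)\overline{S_\delta\circ w}$ with $w$ a dipole as in \rprop{prop:dipole} and $S_\delta$ as in \eqref{eq:Se} brings $\int_\Omega|\cos(\theta/2)||\nabla g|$ as close to $\frac{2}{\pi}\Sigma(u_0\bar v_0)$ as you like. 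Your heuristic that the oscillation of $\tau_n$ keeps $\theta$ nearly constant on cells of width $2\pi/n^2$ fails because $\theta=\arg\bigl((T_n\circ u_0)\bar v\bigr)$ depends also on the competitor $v$, whose gradient is completely unconstrained; the competitors above make $\theta$ sweep the whole circle inside a single cell, concentrating $|\nabla g|$ exactly where $|\cos(\theta/2)|$ vanishes. Once you discard the $\sin^2$ term you have irretrievably reduced to a quantity whose infimum is $\frac{2}{\pi}\Sigma$, and the oscillations of $T_n$ cannot bring it back to $\Sigma$.

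The paper's lower bound does not pass through $|\nabla(|w-v|)|$. Instead \rlemma{lem:main} works with the gauge $W:=\overline{u_0}(v-T_n\circ u_0)+1$, normalizes $\widetilde W:=W/|W|$, checks $\widetilde W\in\mathcal{E}(v\overline{u_0})=\mathcal{E}(v_0\overline{u_0})$ so that (after a truncation by $K_\delta$) $\int|\nabla\widetilde W|\gtrsim\Sigma(u_0\bar v_0)$, and bounds $\int|\nabla(v-T_n\circ u_0)|$ below by $\int|\nabla W|$ up to an error $2\int|\nabla u_0|$. The oscillation of $T_n$ (via \rlemma{lem:ptwise} and \eqref{eq:useful2}) enters to \emph{absorb that error}: on the set $A_\ve$ one has $|\nabla(T_n\circ u_0-v)|\geq(\sin\ve)(n-2)|\nabla u_0|$, so for $n\geq1/\ve^2$ the term $\int_{A_\ve}|\nabla u_0|$ is negligible compared with $\int_{A_\ve}|\nabla(T_n\circ u_0-v)|$. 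The information you need is precisely in the $\sin^2\bigl((\alpha-\beta)/2\bigr)|\nabla(\alpha+\beta)|^2$ term you discarded; either keep it and extract from it what the gauge argument extracts, or switch to the gauge argument itself.
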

\par
As mentioned above, a special case of interest is the distance of a given
$u\in\woo(\Omega;\so)$ to the class $\mathcal{E}$. 
An immediate consequence of \rth{th:main} is that  for every $u\in W^{1,1}(\Omega;{\mathbb S}^1)$ we have
  \begin{equation}
    \label{eq:3}
 \frac{2}{\pi} \Sigma(u)\leq d_{\woo}(u,\mathcal{E})\leq\Sigma(u),
  \end{equation}
 and the bounds are optimal in the sense that
 \begin{align}
 \sup_{u\notin\mathcal{E}}\frac{d_{\woo}(u,\mathcal{E})}{\Sigma(u)}=1, \label{eq:4}
\intertext{and}
\inf_{u\notin\mathcal{E}}\frac{d_{\woo}(u,\mathcal{E})}{\Sigma(u)}=\frac{2}{\pi}.\label{eq:7}
 \end{align}

There are challenging problems concerning the question
whether the supremum and the infimum in the above formulas are
achieved (see \S\ref{subsec:thoughts}). 
\begin{remark}
\label{rem:dist}
Formulas \eqref{eq:3}--\eqref{eq:7} provide  a sharp improvement of the inequality 
\begin{equation}
  \label{eq:sec11-6}
\frac{1}{2}\Sigma(u)\leq d_{\woo}(u,\mathcal{E})\leq \Sigma(u), \ 
\forall\,  u\in \woo(\Omega;\so),
\end{equation}
established in \cite[Sec.~11.6]{bm}.
\end{remark}

 \vskip 2mm
Finally, we turn in Section~\ref{sec:w1p} to the classes in $W^{1,p} (\Omega ; \so)$, $1< p<\infty$,
defined in an analogous way to the $\woo$-case, i.e., using the
equivalence relation 
\begin{equation}
   \label{eq:equivp}
  u\sim v \text{ if and only if }u=e^{\im \varphi}\, v\text{ for some }
 \varphi\in \wop(\Omega;\R).
 \end{equation}
We point out that if $u,v\in
W^{1,p}(\Omega ; \so)$ are equivalent according to the equivalence
relation in \eqref{eq:equiv}, then from the relation $e^{\im\varphi}=u{\overline
  v}$ we deduce that
\begin{equation}
  \label{eq:108}
\nabla\varphi=-\im\overline{u}v\nabla(u{\overline
  v})\in L^p(\Omega;\R^N);
\end{equation}
whence $u\sim v$ according to
\eqref{eq:equivp} as well.  When $p\geq2$ and $\Omega$ is simply connected we
have $W^{1,p} (\Omega ;\so)=\{u\in W^{1,1}(\Omega;{\mathbb S}^1);\,u=e^{\im \varphi}\text{ for some }
 \varphi\in W^{1,p}(\Omega;\R)\}$, see Remark~\ref{rem:5A} below. Therefore, the only cases of interest are:\\[2mm]
(a) general $\Omega$ and $1<p<2$,\\
(b) multiply connected $\Omega$ and $p\geq 2$.\\
In all the theorems below we assume that we are in one of these
situations. The distances
between the classes are defined analogously to
\eqref{eq:10}--\eqref{eq:11} by
 \begin{align}
   \label{eq:87}
   \dist_{\wop}(\mathcal{E}(u_0),\mathcal{E}(v_0)):=\inf_{u\sim u_0}\inf_{v\sim
     v_0} \|\nabla(u-v)\|_{L^p(\Omega)}.
\intertext{and}
\label{eq:1}
   \Dist_{\wop}(\mathcal{E}(u_0),\mathcal{E}(v_0)):=\sup_{u\sim u_0}\inf_{v\sim
     v_0} \|\nabla(u-v)\|_{L^p(\Omega)}.
 \end{align}
 The next result establishes a lower bound for $\dist_{\wop}$:
 \begin{theorem}
   \label{th:lp}
  For every $u_0,v_0\in W^{1,p} (\Omega ; \so)$, $1\le p<\infty$, we have 
  \begin{equation}
    \label{eq:77}
 \dist_{\wop}(\mathcal{E}(u_0),\mathcal{E}(v_0))\geq \left(\frac{2}{\pi}\right)\inf_{w\sim
  u_0{\overline v}_0 }\|\nabla w\|_{L^p(\Omega)}.
  \end{equation}
 \end{theorem}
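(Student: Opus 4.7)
The plan is to mimic the lower bound argument behind \eqref{eq:19a} in the $L^p$ setting: starting from arbitrary $u\sim u_0$ and $v\sim v_0$, I will build an element $\tilde w\sim u_0\overline v_0$ satisfying $\|\nabla\tilde w\|_{L^p(\Omega)}\le(\pi/2)\|\nabla(u-v)\|_{L^p(\Omega)}$. The two main ingredients are a pointwise inequality relating $|\nabla(u-v)|$ to $|\nabla(u\overline v)|$, and a carefully chosen degree-one Lipschitz reparametrization $T\colon\so\to\so$ (in the spirit of \eqref{eq:X2}) whose role is to absorb into the sharp constant $2/\pi$ the contraction factor appearing in that inequality.

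First, set $w:=u\overline v$, so that $w\sim u_0\overline v_0$ and $u-v=v(w-1)$. The real vector fields $\nabla\beta:=-\im\overline v\nabla v$ and $\nabla\theta:=-\im\overline w\nabla w$ are well defined a.e.\ intrinsically from $v$ and $w$ (they coincide with gradients of local lifts $v=e^{\im\beta}$, $w=e^{\im\theta}$, but do not require a global lift). A direct expansion of $\nabla(v(w-1))=\im v[\nabla\beta(w-1)+w\nabla\theta]$, using $|w-1|^2=4\sin^2(\theta/2)$ and $\operatorname{Re}((w-1)\overline w)=1-\cos\theta$, followed by completing the square, yields the pointwise identity
\bes
|\nabla(u-v)|^2=\cos^2(\theta/2)\,|\nabla w|^2+\sin^2(\theta/2)\,|\nabla\theta+2\nabla\beta|^2\quad\text{a.e.\ in }\Omega,
\ees
where $\cos^2(\theta/2)=(1+\operatorname{Re} w)/2$ depends only on $w$. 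Dropping the nonnegative second term gives $|\nabla(u-v)|\ge|\cos(\theta/2)|\,|\nabla w|$ a.e.

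Next, define $\tau\colon\R\to\R$ by $\tau(0)=0$ and $\tau'(\theta)=(\pi/2)|\cos(\theta/2)|$. Since $\int_0^{2\pi}|\cos(\theta/2)|\,d\theta=4$, one has $\tau(\theta+2\pi)=\tau(\theta)+2\pi$, so $T(e^{\im\theta}):=e^{\im\tau(\theta)}$ defines a Lipschitz map $T\colon\so\to\so$ of degree one. For $\tilde w:=T\circ w$ the chain rule gives $|\nabla\tilde w|=(\pi/2)|\cos(\theta/2)|\,|\nabla w|$ a.e., which combined with the previous bound yields $|\nabla\tilde w|\le(\pi/2)\,|\nabla(u-v)|$ a.e., hence $\|\nabla\tilde w\|_{L^p(\Omega)}\le(\pi/2)\|\nabla(u-v)\|_{L^p(\Omega)}$. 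Since $T$ has degree one we may write $T(z)=z\,e^{\im\psi(z)}$ with $\psi\in\text{Lip}(\so;\R)$, so $\tilde w=w\,e^{\im\psi\circ w}$ with $\psi\circ w\in W^{1,p}(\Omega;\R)$, and therefore $\tilde w\sim w\sim u_0\overline v_0$ (this is the $W^{1,p}$ analog of \rlemma{lem:T}). Consequently
\bes
\inf_{w'\sim u_0\overline v_0}\|\nabla w'\|_{L^p(\Omega)}\le\|\nabla\tilde w\|_{L^p(\Omega)}\le(\pi/2)\|\nabla(u-v)\|_{L^p(\Omega)},
\ees
and taking the infimum over $u\sim u_0$ and $v\sim v_0$ gives \eqref{eq:77}.

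The main delicate point is establishing the pointwise identity in its intrinsic form, free of any global lift. Once this is in place, the key clever step is the choice $\tau'=(\pi/2)|\cos(\theta/2)|$: it is precisely what ensures that $T$ is Lipschitz of degree one and, more importantly, that $|\nabla\tilde w|$ is exactly $(\pi/2)$ times the factor $|\cos(\theta/2)||\nabla w|$ pointwise dominated by $|\nabla(u-v)|$. It is this exact match between the two ingredients that delivers the sharp constant $2/\pi$.
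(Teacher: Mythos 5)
Your proof is correct and reaches the statement by the same overall strategy as the paper's: reduce to the single map $w=u\overline v$, establish the pointwise bound $|\nabla(u-v)|\ge|\cos(\theta/2)|\,|\nabla w|$ (with $\cos^2(\theta/2)=(1+\operatorname{Re}w)/2$ a function of $w$ alone), and absorb the factor $|\cos(\theta/2)|$ by composing $w$ with a degree-one Lipschitz reparametrization of $\so$. Your $T$ is in fact \emph{exactly} the map of \eqref{eq:41}: for $\varphi\in(-\pi,\pi]$ one has $\frac{d}{d\varphi}\bigl[\pi\sin(\varphi/2)\bigr]=(\pi/2)\cos(\varphi/2)=(\pi/2)|\cos(\varphi/2)|$, so the two definitions agree. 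The only genuine difference is the route to the pointwise bound. The paper applies the standard inequality $|\nabla(u-v)|\ge\bigl|\nabla|u-v|\bigr|$ together with $|u-v|=|w-1|$ and the identity \eqref{eq:15*}; you instead prove the exact decomposition
\bes
|\nabla(u-v)|^2=\cos^2(\theta/2)\,|\nabla w|^2+\sin^2(\theta/2)\,|\nabla\theta+2\nabla\beta|^2
\ees
(with $\nabla\theta:=-\im\overline w\nabla w=w\wedge\nabla w$ and $\nabla\beta:=-\im\overline v\nabla v=v\wedge\nabla v$, cf.\ \eqref{eq:140}) and drop the nonnegative second term. These are logically equivalent — both chains contain exactly one inequality, the rest being identities — but your version has the small virtue of making the pointwise equality case explicit (namely $\nabla\theta+2\nabla\beta=0$ wherever $\sin(\theta/2)\neq0$), at the cost of a longer algebraic computation. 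The paper's route is shorter because it simply reuses the $|\nabla|F||\le|\nabla F|$ shortcut and the identity \eqref{eq:15*} already set up for Lemma~\ref{lem:bw} and Corollary~\ref{cor:uv}.
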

 \begin{remark}
   \label{rem:sp}
 For $p>1$ the infimum on the R.H.S.~of \eqref{eq:77} is actually a
 minimum; this follows easily from \eqref{eq:108} and the fact that
 $\wop$ is reflexive. 
 \end{remark}
 Note that equality in \eqref{eq:77} holds for $p=1$ by
 \eqref{eq:19a}. An example in \cite[Section~4]{rs} shows that  strict inequality
 \enquote{$>$} may occur in \eqref{eq:77} for a multiply connected domain in
 dimension two and $p=2$. We will show in \S\ref{subsec:strict}  that strict
 inequality may also occur  for simply connected domains when $1<p<2$.
On the positive side, we prove equality in \eqref{eq:77} in the case of the distance to $\mathcal{E}$:
\begin{theorem}
  \label{th:smooth}
 For every $u_0\in W^{1,p}(\Omega;\so)$, $1<p<\infty$, we have
 \begin{equation}
   \label{eq:20}
   \dist_{\wop}(\mathcal{E}(u_0),\mathcal{E})=\left(\frac{2}{\pi}\right)\inf_{w\sim
  u_0}\|\nabla w\|_{L^p(\Omega)}.
 \end{equation}
\end{theorem}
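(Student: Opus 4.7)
The inequality $\geq$ in \eqref{eq:20} is immediate from Theorem~\ref{th:lp} applied with $v_0 = 1$ (so that $\mathcal{E}(v_0) = \mathcal{E}$ and $u_0\overline{v}_0 = u_0$). The plan is therefore to prove the reverse inequality. Setting $M := \inf_{w \sim u_0}\|\nabla w\|_{L^p(\Omega)}$, by Remark~\ref{rem:sp} this infimum is attained by some $w^* \sim u_0$. I would aim to construct, for each $\varepsilon>0$, a pair $(u,v)$ with $u \sim u_0$ and $v\in \mathcal{E}$ satisfying $\|\nabla(u-v)\|_{L^p}\le (2/\pi)M + \varepsilon$.

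\textbf{Paragraph 2: Construction via post-composition.} The natural strategy, modeled on the mechanism underlying \eqref{eq:19a}, is to build $u_n,v_n$ by post-composing $w^*$ with Lipschitz maps on the target $\so$. Pick $T_n\in \mathrm{Lip}(\so;\so)$ of degree $1$ and $V_n \in \mathrm{Lip}(\so;\so)$ of degree $0$, and set $u_n := T_n\circ w^*$ and $v_n := V_n\circ w^*$. By \eqref{eq:X2} we get $u_n\sim w^*\sim u_0$; and $v_n\in\mathcal{E}$ because a degree-zero Lipschitz map admits a single-valued Lipschitz lift $V_n = e^{\im\sigma_n}$, hence $v_n = e^{\im(\sigma_n\circ w^*)}$ with $\sigma_n\circ w^*\in W^{1,p}(\Omega;\R)$ (the $W^{1,p}$ membership follows from the Lipschitz character of $\sigma_n$ and $w^*\in W^{1,p}$). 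A chain-rule computation yields the pointwise identity
\[
|\nabla(u_n-v_n)|(x) = F_n(w^*(x))\,|\nabla w^*|(x),
\]
where $F_n:\so\to[0,\infty)$ is the modulus of the tangential derivative of $T_n-V_n$ on $\so$. Hence
\[
\|\nabla(u_n-v_n)\|_{L^p(\Omega)}^p \;=\; \int_\Omega F_n(w^*(x))^p\,|\nabla w^*(x)|^p\,dx.
\]

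\textbf{Paragraph 3: Averaging to the constant $2/\pi$.} The crux is to choose $(T_n,V_n)$ so that $F_n^p$ converges weakly-$\ast$ in $L^\infty(\so)$ to the constant $(2/\pi)^p$. A natural candidate is a sawtooth template in the spirit of \eqref{eq:taun}, tuned so that: (a) $\deg T_n=1$ and $\deg V_n=0$; (b) $F_n$ is uniformly bounded on $\so$ in $n$; (c) on every arc of $\so$, the $L^p$-average of $F_n^p$ tends to $(2/\pi)^p$. The same combinatorial pattern that produces the $2/\pi$ in \eqref{eq:19a} is the source here; the $p$-independence of the constant reflects the fact that the average arises from $\tfrac{1}{\pi}\int_0^\pi|\cos\theta|\,d\theta=2/\pi$, which does not depend on the exponent used to weight it because the sawtooth can be engineered so that $F_n$ itself (not just $F_n^p$) is essentially constant equal to $2/\pi$ at the oscillation scale. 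Given such $(T_n,V_n)$, the formula in Paragraph~2 combined with the weak-$\ast$ convergence yields
\[
\|\nabla(u_n-v_n)\|_{L^p(\Omega)}^p \;\longrightarrow\; (2/\pi)^p\int_\Omega|\nabla w^*|^p\,dx \;=\; (2/\pi)^p M^p,
\]
which finishes the proof.

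\textbf{Paragraph 4: Main obstacle.} The chief difficulty is reconciling the target-side weak convergence $F_n^p\rightharpoonup(2/\pi)^p$ on $\so$ with the integration against the measure $|\nabla w^*|^p\,dx$ pushed forward by $w^*$. If the pushforward $(w^*)_\ast(|\nabla w^*|^p\,dx)$ happens to concentrate on a set of zero length in $\so$ (for instance, if $w^*$ has level sets of positive $N$-dimensional measure), then weak-$\ast$ convergence of $F_n^p$ against continuous test densities is insufficient. The plan to circumvent this is to first regularize $w^*$: replace it by a slight perturbation $\widetilde w^*\sim u_0$ with $\|\nabla\widetilde w^*\|_{L^p}\le M+\varepsilon$ and whose pushforward is absolutely continuous with respect to arclength on $\so$; here the strict convexity of $\|\cdot\|_{L^p}^p$ for $p>1$ guarantees that such a perturbation does not wreck the minimizing property. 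With this genericity in hand, the averaging goes through and delivers the $(2/\pi)$-bound.
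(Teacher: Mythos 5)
Your Paragraphs 1–2 are sound: the lower bound via Theorem~\ref{th:lp}, and the framework of taking $u=T\circ w^*$, $v=V\circ w^*$ with $\deg T=1$, $\deg V=0$ and $|\nabla(u-v)|=F(w^*)|\nabla w^*|$ (with $F$ the modulus of the tangential derivative of $T-V$) is exactly the right shape of argument. But Paragraphs 3–4 have genuine gaps. First, the sawtooth of \eqref{eq:taun} is the \emph{wrong} template: by construction $|\tau_n'|\geq n-2$ there, so $|\dot T_n|\sim n$, and there is no way to pair it with a degree-zero $V_n$ so that $|(T_n-V_n)'|$ stays bounded uniformly in $n$; that sawtooth is tuned to make the derivative \emph{large} (for the $\Dist$-lower bound in Proposition~\ref{prop:X}), not to flatten $F_n$ near $2/\pi$. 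Second, even granting a bounded family $(T_n,V_n)$ with $F_n^p\rightharpoonup(2/\pi)^p$ weak-$\ast$ in $L^\infty(\so)$, that only tests against $L^1(\so)$ densities, whereas you need convergence against the pushforward $\mu:=(w^*)_\ast(|\nabla w^*|^p\,dx)$, which can be singular. (Your diagnosis is also slightly off: a level set $\{w^*=\zeta\}$ of positive $N$-measure has $\nabla w^*=0$ a.e.\ there and so contributes \emph{zero} mass to $\mu$, not concentration; $\mu$ is atomless but may still be singular.) Your proposed fix---perturb $w^*$ so that $\mu$ becomes absolutely continuous, invoking strict convexity---is not carried out and not obviously true; strict convexity gives uniqueness (mod rotation) of the minimizer, not genericity of its pushforward.

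The paper sidesteps both problems by exhibiting a \emph{single} degree-$1$/degree-$0$ pair for which $F$ is \emph{pointwise} close to $2/\pi$, so no averaging against $\mu$ is needed at all. In effect, the upper bound is realized by $T_1\circ S_\varepsilon$ (degree $1$) and $T_0\circ S_\varepsilon$ (degree $0$), with $T_0,T_1$ as in Lemma~\ref{lem:main-lemma} and $S_\varepsilon$ the Lipschitz regularization of $S=T^{-1}$ for the map $T$ of \eqref{eq:41}: the factor $|(T_1-T_0)'|(e^{\im\phi})\approx|\cos(\phi/2)|$ is cancelled by $|\dot S_\varepsilon|(e^{\im\varphi})=(2/\pi)/|\cos(\phi/2)|$ (where $e^{\im\phi}=S_\varepsilon(e^{\im\varphi})$), leaving $F\approx 2/\pi$ a.e.\ away from $-1$, with a uniform domination that lets one pass to the limit by dominated convergence for \emph{every} $w\sim u_0$. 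Organizationally this is split into Lemma~\ref{lem:main-lemma} (giving $d_{\wop}(\mathcal{E}(u_0),\mathcal{E})\leq(1+C_p\delta)\|\nabla|w-1|\|_{L^p}$ for each $w\sim u_0$) and Lemma~\ref{lem:rs} (giving $\inf_{w\sim u_0}\|\nabla|w-1|\|_{L^p}=(2/\pi)\inf_{w\sim u_0}\|\nabla w\|_{L^p}$). To repair your proof you should replace the sawtooth-plus-weak-convergence scheme by such an explicit pointwise construction; the intermediate quantity $\|\nabla|w-1|\|_{L^p}$ is what makes the exponent $p$ harmless.
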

\begin{remark}
  When $p>1$ we do not know general conditions on
  $u_0,v_0\in\wop(\Omega;\so)$ that guarantee equality in
  \eqref{eq:77} (a sufficient condition in the case of multiply
  connected two dimensional domain and $p=2$ is given in \cite[Th.~4]{rs}).
\end{remark}
On the other hand, when $p>1$, $\Dist_{\wop}$ between distinct classes
is infinite:
 \begin{theorem}
   \label{th:no-ub}
  For every $u_0,v_0\in W^{1,p} (\Omega ; \so)$, $1<p<\infty$, such that $u_0\not\sim
  v_0$ we have 
  \begin{equation}
    \label{eq:88}
 \Dist_{\wop}(\mathcal{E}(u_0),\mathcal{E}(v_0))=\infty.
  \end{equation}
 \end{theorem}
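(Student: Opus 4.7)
The plan is to argue by contradiction: assume $\Dist_{\wop}(\mathcal{E}(u_0),\mathcal{E}(v_0))\le M<\infty$, and take $u_n:=T_n\circ u_0$, with $T_n$ as in Proposition~\ref{prop:X}; by~\eqref{eq:X2}, $u_n\in\mathcal{E}(u_0)$. The chain rule gives $|\nabla u_n|=|\tau_n'(\arg u_0)|\,|\nabla u_0|\ge(n-2)|\nabla u_0|$ pointwise, whence $\|\nabla u_n\|_{L^p}\to\infty$ provided $u_0$ is non-constant. (If $u_0$ is constant, the situation is symmetric: $v_0$ is then non-constant, and one proceeds by swapping the roles of the two classes and taking $u_n=e^{inf}\in\mathcal{E}=\mathcal{E}(u_0)$ for a nontrivial smooth $f$.) By hypothesis, we may find $v_n\in\mathcal{E}(v_0)$ with $\|\nabla(u_n-v_n)\|_{L^p}\le M+1$.

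The first key ingredient is the pointwise inequality
\[
|\nabla u-\nabla v|\ge|\mathrm{Im}(u\bar v)|\,|\nabla u|,
\]
valid for $\so$-valued $u,v$, which follows from the identity
\[
|\nabla u-\nabla v|^2=\sin^2(\alpha-\beta)|\nabla u|^2+\bigl|\nabla\beta-\cos(\alpha-\beta)\nabla\alpha\bigr|^2
\]
in local lifts $\alpha=\arg u,\beta=\arg v$, upon noting that $\sin(\alpha-\beta)=\mathrm{Im}(u\bar v)$. Applied to $u_n,v_n$ and combined with $|\nabla u_n|\ge(n-2)|\nabla u_0|$:
\[
\int_\Omega|\mathrm{Im}(u_n\bar v_n)|^p\,|\nabla u_0|^p\,dx\le\frac{(M+1)^p}{(n-2)^p}\longrightarrow 0.
\]
Hence $\mathrm{Im}(u_n\bar v_n)\to 0$ in $L^p$ on the positive-measure set $\{|\nabla u_0|>0\}$; up to a subsequence, $w_n:=u_n\bar v_n$ converges a.e.\ to a $\{\pm 1\}$-valued function on that set.

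The contradiction will come from a topological count. Writing $v_n=v_0 e^{i\psi_n}$ with $\psi_n\in\wop(\Omega,\R)$ and using $\deg T_n=1$, one verifies that $\oint_\gamma w_n\wedge\nabla w_n=2\pi k$ for every smooth loop $\gamma\subset\Omega$, where $k$ equals the winding of $u_0\bar v_0$ around $\gamma$ (and is in particular independent of $n$). Since $u_0\bar v_0\notin\mathcal{E}$, some $\gamma$ has $k\ne 0$; along such a $\gamma$, the winding count forces $w_n$ to undergo at least $2|k|$ transitions between neighborhoods of $+1$ and $-1$, each occurring on an arc on which $|\mathrm{Im}(w_n)|\ge 1/2$. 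A Fubini-type slicing of $\Omega$ along a family of loops homotopic to $\gamma$ will then convert this one-dimensional count into a two-dimensional lower bound $|\{|\mathrm{Im}(w_n)|\ge 1/2\}|\ge c>0$ independent of $n$, yielding $\int_\Omega|\mathrm{Im}(w_n)|^p|\nabla u_0|^p\,dx\gtrsim c'>0$, which contradicts the vanishing established above.

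The hard part will be executing this Fubini slicing rigorously --- propagating the ``at least $2|k|$ transitions on each $\gamma_t$'' from single loops to a two-dimensional lower bound on the transition area --- for which I would rely on the continuity of the winding number along $W^{1,1}$-traces (justified by the Gagliardo trace theorem for $p>1$) together with the reflexivity of $L^p$ to extract suitable subsequences of $\{w_n\}$.
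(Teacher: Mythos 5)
The first half of your argument is sound: the pointwise identity you write down is correct (it gives a sharpened, $\varepsilon$-free form of the paper's Lemma~\ref{lem:ineqs1}), and combined with $|\nabla(T_n\circ u_0)|\ge(n-2)|\nabla u_0|$ it does yield $\int_\Omega|\mathrm{Im}(w_n)|^p|\nabla u_0|^p\to 0$ under the contradiction hypothesis. But the final step has a genuine gap. The claim that Fubini slicing yields a lower bound $|\{|\mathrm{Im}(w_n)|\ge1/2\}|\ge c>0$ \emph{independent of $n$} is false. On a.e.\ loop $\gamma_t$ the trace of $w_n$ has winding number $k\neq0$ and hence must cross the arc $\{z\in\so:\,|\mathrm{Im}\,z|\ge1/2\}$, but the arclength it spends there can be made arbitrarily small: a degree-$k$ circle map can concentrate all of its variation on a set of vanishing measure. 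Indeed Proposition~\ref{prop:dipole} produces sequences $\{w_n\}\subset\mathcal{E}(u_0\bar v_0)$ with $w_n\to1$ a.e., so the measure of your transition set can and does tend to zero. Hence the asserted lower bound on $\int_\Omega|\mathrm{Im}(w_n)|^p|\nabla u_0|^p$ does not follow, and the contradiction is not reached.

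What the slicing \emph{does} give is an $L^1$ bound: integrating the crossing cost over loops yields $\int_{\{|\mathrm{Im}(w_n)|\ge1/2\}}|\nabla w_n|\ge c'>0$ uniformly in $n$ (this is essentially $\Sigma(v_0)>0$, via a cutoff map such as $K_\varepsilon$ of \eqref{eq:Keps}). Turning this $L^1$ fact into an $L^p$ obstruction requires H\"older: writing $m_n:=|\{|\mathrm{Im}(w_n)|\ge1/2\}|$, one gets $\int_{\{|\mathrm{Im}(w_n)|\ge1/2\}}|\nabla w_n|^p\gtrsim 1/m_n^{p-1}$, and since the reverse triangle inequality \eqref{eq:52} gives $|\nabla(u_n-v_n)|\gtrsim|\nabla w_n|$ on that set, you obtain $\int|\nabla(u_n-v_n)|^p\gtrsim\max\bigl(m_n n^p,\,1/m_n^{p-1}\bigr)\gtrsim n^{p-1}$. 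This balance between two competing estimates --- small measure forces large gradient, large measure forces a large oscillation cost --- is precisely what is missing from your argument, and it is what the paper's proof supplies. Note also that the paper's choice $u_n=e^{\im nx_1}$ has $|\nabla u_n|\equiv n$, avoiding the degeneracy where $\nabla u_0$ vanishes that your $T_n\circ u_0$ introduces; the general case is then handled by a triangle-inequality reduction to $u_0=1$.
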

\begin{remark}
  \label{rem:5A} 
  There is another natural equivalence relation in $\wop(\Omega;\so)$,
  $1\leq p<\infty$, defined by the homotopy classes, i.e.,
  \begin{equation*}
    u\stackrel{\mathcal{H}}{\sim} v\text{ if and only if } u=h(0)\text{ and
    }v=h(1)\text{ for some }h\in C\left([0,1];\wop(\Omega;\so)\right).
  \end{equation*} 
Homotopy classes have been well-studied (see
\cite{b-li,bm-RACSAM,hang-lin,rub-ster,white}).  Clearly $u\sim
v \Longrightarrow u\stackrel{\mathcal{H}}{\sim} v$ (use the
homotopy $h(t)=e^{\im(1-t)\varphi}v$). 
Note however that when $1\leq
p<2$ the equivalence relation $u\sim v$ is {\em much more restrictive}
than $u\stackrel{\mathcal{H}}{\sim} v$; for example let $\Omega=$unit
disc in $\R^2$, $u(x)=x/|x|$ and $v(x)=(x-a)/|x-a|$ with $0\neq a\in\Omega$,
then $u\not\sim v$ (in fact,
$\dist_{\woo}(\mathcal{E}(u),\mathcal{E}(v))=4|a|>0$ by
\eqref{eq:129} below) while
$u\stackrel{\mathcal{H}}{\sim} v$, e.g., via the homotopy
$h(t)=(x-ta)/|x-ta|, \,0\leq t\leq 1$.
\end{remark}
Part of the results were announced in \cite{bmsCRAS}.
\subsubsection*{Acknowledgments} 
The first author (HB) was partially supported by NSF grant DMS-1207793.  The second author (PM) was partially  supported  by the LABEX MILYON (ANR-10-LABX-0070) of Universit\'e de Lyon,
within the program \enquote{Investissements d'Avenir} (ANR-11-IDEX-0007) operated
by the French National Research Agency (ANR). The third author (IS)
was supported by  the Israel Science Foundation (Grant No. 999/13).

\section{Further comments on $\Sigma(u)$ and $\mathcal{E}(u)$}
\label{sec:comments}
Given $a,b\in\C$, write as usual $a=a_1+\im a_2$, $b=b_1+\im b_2$; we
also identify $a,b$ with the vectors  $a=(a_1,a_2)^T,
b=(b_1,b_2)^T\in\R^2$ and set
\begin{equation}
  \label{eq:126}
  a\land b=a_1b_2-a_2b_1=\text{Im}(\overline{a}b)\in\R.
\end{equation}
\subsection{The distributional Jacobian $Ju$}
For every $u\in\woo(\Omega;\so)$ we consider $u\land\nabla u\in
L^1(\Omega;\R^N)$ defined by its components
\begin{equation}
  \label{eq:138}
  (u\land \nabla u)_j=u\land\frac{\partial u}{\partial x_j}=
u_1\frac{\partial u_2}{\partial x_j}-u_2\frac{\partial u_1}{\partial
  x_j},\  j=1,\ldots,N.
\end{equation}
Since $|u|^2=1$ on $\Omega$ we have
\begin{equation}
  \label{eq:139}
 u_1\frac{\partial u_1}{\partial x_j}+u_2\frac{\partial u_2}{\partial
   x_j}=0 \text{ in }\Omega,
\end{equation}
and thus
\begin{equation}
  \label{eq:140}
 u\land\frac{\partial u}{\partial x_j}=-\im \overline{u}\frac{\partial
   u}{\partial x_j}\text{ in }\Omega;
\end{equation}
in particular,
\begin{equation}
  \label{eq:141}
|u\land\nabla u|=|\nabla u|\text{ in }\Omega.
\end{equation}
The following identities are elementary:
\begin{align}
  (uv)\land\nabla(uv)&=u\land\nabla u+v\land\nabla v, & &  \forall\,
  u,v\in\woo(\Omega;\so),\label{eq:1.4}\\
 e^{\im\varphi}\land \nabla(e^{\im\varphi})&=\nabla\varphi, & &
  \forall\, \varphi\in\woo(\Omega;\R),&\label{eq:1.5}\\
\overline{u}\land \nabla\overline{u}&=-u\land\nabla u,  & &\forall\, 
  u\in\woo(\Omega;\so).\label{eq:1.6}
\end{align}
Finally we introduce, for every $u\in\woo(\Omega;\so)$, its
{\em distributional} Jacobian $Ju$, which is an antisymmetric matrix
with coefficients in $\mathcal{D}'(\Omega;\R)$ defined by
\begin{equation}
  \label{eq:142}
(Ju)_{i,j}:=\frac{1}{2}\left[\frac{\partial}{\partial
    x_i}\left(u\land\frac{\partial u}{\partial x_j}\right)-
\frac{\partial}{\partial x_j}\left(u\land\frac{\partial u}{\partial x_i}\right)\right].
\end{equation}
When $N=2$, $Ju$ is identified with the {\em scalar} distribution
\begin{equation}
  \label{eq:143}
Ju=\frac{1}{2}\left[\frac{\partial}{\partial
    x_1}\left(u\land\frac{\partial u}{\partial x_2}\right)-
\frac{\partial}{\partial x_2}\left(u\land\frac{\partial u}{\partial
  x_1}\right)\right]=\frac{1}{2}\curl\left(u\land\nabla u\right).
\end{equation}
From \eqref{eq:1.4}--\eqref{eq:1.6} we deduce that
\begin{align}
  J(uv)&=Ju+Jv, & &\forall\, u,v\in\woo(\Omega;\so),\label{eq:145}\\
J(\overline{u})&=-Ju, & &\forall\, u\in\woo(\Omega;\so),\label{eq:146}\\
J(e^{\im\varphi})&=0, & &\forall\,
  \varphi\in\woo(\Omega;\R),\label{eq:147}
\end{align}
i.e.,
\begin{equation}
J(u)=0, \ \forall\, u\in\mathcal{E},\label{eq:148}
\end{equation}
and thus
\begin{equation}
u\sim v \Longrightarrow Ju=Jv.\label{eq:149}
\end{equation}
When $\Omega$ is {\em simply connected} the converse is also true, so
that
\begin{equation}
  \label{eq:150}
u\sim v \Longleftrightarrow Ju=Jv;
\end{equation}
in other words,
\begin{equation}
  \label{eq:151}
\mathcal{E}(u)=\{v\in\woo(\Omega;\so);\, Ju=Jv\}.
\end{equation}
This fact is originally due to Demengel~\cite{demengel}, with roots in
\cite{bethuel-IAHP}; simpler proofs can be found in
\cite{bm,bbm-lifting,carbou}. 
\par In order to have a more concrete perception of the equivalence
relation $u\sim v$ it is instructive to understand what it means when
$N=2$ and $\Omega$ is simply connected, for $u,v\in\mathcal{R}$ where  
\begin{equation}
  \label{eq:152}\mathcal{R}=\{u\in\woo(\Omega;\so);\,u\text{ is
    smooth in $\Omega$ except at a finite number of points}\}.
\end{equation}
The class $\mathcal{R}$ plays an important role since it is dense in
$\woo(\Omega;\so)$ (see \cite{bethuelzheng,bm}).
\par If $u\in\mathcal{R}$ then
\begin{equation}
  \label{eq:153}
Ju=\pi\sum_j d_j\delta_{a_j},
\end{equation}
where the $a_j$'s are the singular points of $u$ and $d_j:=\deg(u,a_j)$,
i.e., the topological degree of $u$ restricted to any small circle
centered at $a_j$; see \cite{bcl,bmp,bm} and also \cite[end of
Section~6]{ball} for the special case where $u(x)=x/|x|$. In
particular, when $u,v\in\mathcal{R}$, 
\begin{equation}
  \label{eq:154}
  \begin{aligned}
 u\sim v \Longleftrightarrow [&\text{$u$ and $v$ have the same singularities}\\
 &
\text{and the same degree at each singularity}].
   \end{aligned}
\end{equation}
\subsection{$\Sigma(u)$ computed by duality}
\label{subsec:duality}
An equivalent formula to \eqref{eq:1*} is
\begin{equation}
    \label{eq:122}
    \Sigma(u)=\inf_{\varphi\in\woo(\Omega;\R)}\int_\Omega|u\land
      \nabla u-\nabla\varphi|.
  \end{equation}
Indeed, from \eqref{eq:1.4}--\eqref{eq:1.6} we have
$ue^{-\im\varphi}\land\nabla(ue^{-\im\varphi})=u\land \nabla
u-\nabla\varphi$, and by \eqref{eq:141}, $|\nabla(ue^{-\im\varphi})|=|u\land \nabla
u-\nabla\varphi|$, which yields \eqref{eq:122}.
\par Next we apply the following standard consequence of the
Hahn-Banach theorem:
\begin{equation}
  \label{eq:155}
\dist(p,M)=\inf_{m\in M}\|p-m\|=\max\{<\xi,p>;\,\xi\in M^{\perp},\|\xi\|\leq1\},
\end{equation}
where $E$ is a Banach space, $p\in E$, and $M$ is a linear subspace of
$E$ (see e.g., \cite[Section~1.4, Example 3]{br-FA}).
If we take $E=L^1(\Omega;\R^N)$, $p=u\land\nabla u$,
$M=\{\nabla\varphi;\,\varphi\in\woo(\Omega;\R)\}$, then we have
\begin{equation}
  \label{eq:156}
 M^{\perp}=\{\xi\in L^\infty(\Omega;\R^N);\,\div\xi=0\text{ in
 }\Omega\text{ and }\xi\cdot\nu=0\text{ on }\partial\Omega\},
\end{equation}
where $\nu$ is the outward normal to $\partial\Omega$.
Here the condition [$\div\xi=0\text{ in
 }\Omega\text{ and }\xi\cdot\nu=0\text{ on }\partial\Omega$] is
 understood in the weak sense
 [$\int_\Omega\xi\cdot\nabla\varphi=0,\,\forall\varphi\in\woo(\Omega;\R)$],
 or equivalently,
 [$\int_\Omega\xi\cdot\nabla\varphi=0,\,\forall\varphi\in C^\infty(\overline{\Omega};\R)$].
Inserting \eqref{eq:156} in \eqref{eq:155} yields
\begin{equation}
  \label{eq:157}\Sigma(u)=\max \{\int_\Omega(u\land\nabla
  u)\cdot\xi;\,\xi\in M^{\perp},\|\xi\|_{L^\infty}\leq1\}.
\end{equation}
\par Next we assume that $N=2$ and $\Omega$ is {\em simply
  connected}. We claim that for every $u\in\woo(\Omega;\so)$, 
\begin{equation}
  \label{eq:158}
 \Sigma(u)=\max \{\int_\Omega(u\land\nabla
  u)\cdot\nabla^{\perp}\zeta;\,\zeta\in
  W^{1,\infty}_0(\Omega;\R)\text{ and }\|\nabla\zeta\|_{L^\infty}\leq 1\},
\end{equation}
 where $\nabla^{\perp}\zeta=(-\partial\zeta/\partial
   x_2, \partial\zeta/\partial x_1)$.\\[1mm]
{\em Proof of \eqref{eq:158}.} In view of
\eqref{eq:156}--\eqref{eq:157} it suffices to show that
\begin{equation}
  \label{eq:159}
\{\xi\in L^\infty(\Omega;\R^2);\div\xi=0\text{ in }\Omega\text{ and
}\xi\cdot\nu=0\text{ on }\partial\Omega\}=\{\nabla^{\perp}\zeta;\,\zeta\in W^{1,\infty}_0(\Omega;\R)\}.
\end{equation}
For the inclusion ``$\supset$'', we verify that
\begin{equation*}
\int_\Omega\nabla^{\perp}\zeta\cdot\nabla\varphi=0,~\forall\varphi\in C^\infty(\overline{\Omega};\R);
\end{equation*}
this is clear since $\curl(\nabla\varphi)=0$ and $\zeta=0$ on
$\partial\Omega$.
\par
For the inclusion ``$\subset$", we start with some $\xi\in
L^\infty(\Omega;\R^2)$ such that
\begin{equation}
  \label{eq:161}
\int_\Omega\xi\cdot\nabla\varphi=0,~\forall\varphi\in\woo(\Omega;\R).
\end{equation}
Set ${\bar\xi}:=\begin{cases}\xi,&\text{ in }\Omega\\ 0,&\text{ in
  }\R^2\setminus\Omega\end{cases}$. Then, by \eqref{eq:161},
\begin{equation}
  \label{eq:162}\int_{\R^2}{\bar\xi}\cdot\nabla\Phi=\int_\Omega\xi\cdot\nabla(\Phi|_{\Omega})=0,~\forall\Phi\in C^1_c(\R^2;\R).
\end{equation}
Thus we may invoke the generalized Poincar\'e lemma in $\R^2$ and
conclude that ${\bar\xi}=\nabla^{\perp}{\bar\zeta}$ for some ${\bar
  \zeta}\in W^{1,\infty}(\R^2;\R)$. Clearly,
$\zeta={\bar\zeta}|_\Omega\in W^{1,\infty}(\Omega;\R)$,
$\nabla^{\perp}\zeta=\xi$ and $\zeta$ is constant on $\partial\Omega$
(since $\partial\Omega$ is connected because $\Omega$ is simply
connected).\qed
\begin{remark}
  Equality \eqref{eq:158} is originally due to \cite[Thm~2]{bmp} (with
  a much more complicated proof).
\end{remark}
\par Finally we give a geometric interpretation for $\Sigma(u)$ when
$\Omega\subset\R^2$ is simply connected and $u\in\mathcal{R}$. We
first need some notation. Given $a,b\in\overline{\Omega}$, set
\begin{equation}
\label{eq:163}
  d_\Omega(a,b)=\min\{|a-b|,d(a,\partial\Omega)+d(b,\partial\Omega)\}=\inf_\Gamma\text{length}(\Gamma\cap\Omega),
\end{equation}
where the $\inf_\Gamma$ is taken over all curves $\Gamma\subset\R^2$
joining $a$ to $b$. Clearly $d_\Omega$ is a semi-metric on
$\overline{\Omega}$ ; moreover
\begin{equation*}
  d_\Omega(a,b)=0 \Longleftrightarrow \text{[either $a=b$ or $a,b\in\partial\Omega$]}.
\end{equation*}
Thus we may identify $\partial\Omega$ as a single point in
$\overline{\Omega}$, still denoted $\partial\Omega$.
\par Given $(\vec a,\vec d)=(a_1,a_2,\ldots,a_l,d_1,d_2,\ldots,d_l)$ with
$a_j\in\Omega$ and $d_j\in\Z$, $\forall j$, we set
\begin{equation}
  \label{eq:164}
D=-\sum_{j=1}^l d_j,
\end{equation}
and we consider the collection $(a_1,a_2,\ldots,a_l,\partial\Omega)$
in $\overline{\Omega}$ affected with the integer coefficients
$(d_1,d_2,\ldots,d_l, D)$. We then repeat the points $a_j$'s and
$\partial\Omega$ according to their multiplicities, i.e.,
$d_1,d_2,\ldots,d_l$ and $D$, and we rewrite them
as a collection of $m$ positive points $(P_j)$ and $m$ negative points
$(N_j)$, $1\leq j\leq m$ (this is possible by \eqref{eq:164}). Finally
we define
\begin{equation}
  \label{eq:165}
 L(\vec a,\vec d)=\min_{\sigma\in \mathcal{S}_m}\sum_{j=1}^m d_\Omega(P_j,N_{\sigma(j)}),
\end{equation}
where $\mathcal{S}_m$ denotes the set of permutations of $\{1,2,\dots,m\}$.
\par We are now ready to state our main claim:
  \begin{equation}
    \label{eq:166}
\Sigma(u)=2\pi L(\vec a,\vec d),\ \forall\, u\in\mathcal{R},
  \end{equation}
where the $a_j$'s are the singular points of $u$ and $d_j=\deg(u,a_j)$.
\begin{remark}
  A variant of formula \eqref{eq:166} where $\Omega=\st$ (and
  thus $\partial\Omega=\emptyset$) appears originally in \cite{bmp}, but
  the   core of the proof goes back to \cite{bcl}.
\end{remark}
Here is a sketch of the proof of \eqref{eq:166}. From \eqref{eq:143}
and \eqref{eq:153} we have
\begin{equation}
  \label{eq:167}
-\int_\Omega(u\land\nabla
u)\cdot\nabla^{\perp}\zeta=2\pi\sum_{j=1}^ld_j\zeta(a_j),\ \forall\, \zeta\in W^{1,\infty}_0(\Omega;\R).
\end{equation}
Set
$W^{1,\infty}_{\text{const}}(\Omega;\R)=\{\zeta\in W^{1,\infty}(\Omega;\R);\,\zeta=\text{const}
\text{ on }\partial\Omega\}$ and let $\zeta\in
W^{1,\infty}_{\text{const}}(\Omega;\R)$. From \eqref{eq:167} applied to
$\zeta-\zeta(\partial\Omega)$ we obtain
\begin{equation}
  \label{eq:168}
-\int_\Omega(u\land\nabla
u)\cdot\nabla^{\perp}\zeta=2\pi\left(\sum_{j=1}^l d_j\zeta(a_j)+D\zeta(\partial\Omega)\right).
\end{equation}
Combining \eqref{eq:158} and \eqref{eq:168} we see that
\begin{equation}
  \label{eq:169}
\Sigma(u)=2\pi\max\left\{
\sum_{j=1}^m\left(\zeta(P_j)-\zeta(N_j)\right);\, \zeta\in
W^{1,\infty}_{\text{const}}(\Omega;\R)\text{ and }\|\nabla\zeta\|_{L^\infty}\leq 1\right\}.
\end{equation}
Next we observe that for every $\zeta:\overline{\Omega}\to\R$ the
following conditions are equivalent:
\begin{align}
  \zeta&\in W^{1,\infty}_{\text{const}}(\Omega;\R)\text{ and
         }\|\nabla\zeta\|_{L^\infty}\leq 1\label{eq:170}\\
\intertext{and}
&|\zeta(x)-\zeta(y)|\leq d_\Omega(x,y),\ \forall\, x,y\in\overline{\Omega}.\label{eq:171}
\end{align}
Thus \eqref{eq:169} becomes
\begin{equation}
  \label{eq:172}
\Sigma(u)=2\pi\max\left\{\sum_{j=1}^m\left(\zeta(P_j)-\zeta(N_j)\right);\,\zeta\text{
  satisfying }\eqref{eq:171}\right\}.
\end{equation}
Finally we invoke the formula
\begin{equation}
\label{eq:formula}
  \max\left\{\sum_{j=1}^m\left(\zeta(P_j)-\zeta(N_j)\right);\,\zeta\text{
  satisfying }\eqref{eq:171}\right\}=L({\vec a},{\vec d})
\end{equation}
to conclude that $\Sigma(u)=2\pi L({\vec a},{\vec d})$. 
\par Relation
\eqref{eq:formula} appears originally in \cite[Lemma~4.2]{bcl}. The
proof in \cite{bcl} combines a theorem of Kantorovich with Birkhoff's
theorem on doubly stochastic matrices. An elementary proof of
\eqref{eq:formula}, totally self-contained, is presented in
\cite{br1987}; it is inspired by the proof of the celebrated result of
Rockafellar concerning cyclically monotone operators. Versions of
formula \eqref{eq:formula} have become part of folklore in the optimal
transport community under the name \enquote{Kantorovich duality'}, see
  e.g., \cite{evans,santam,villani}.
\subsection{Optimal lifting}
\label{subsec:optimal-lifting}
It is known (see \cite[Section~6.2]{GMS} and \cite{DI,M,bm})  that  every $u\in\woo(\Omega;\so)$ can be written as
$u=e^{\im\varphi}$ with $\varphi\in BV(\Omega;\R)$. In fact, there are
many such $\varphi$'s in $BV$ and it is natural to introduce the
quantity  
\begin{equation}
  \label{eq:136}
  E(u)=  
  \inf \left\{\int_\Omega |D\varphi|;\, \varphi\in BV(\Omega;\R)
  \text{ such that }u=e^{\im\varphi}\right\}.
\end{equation}
 Then,
 \begin{equation}
   \label{eq:137}
   E(u)=\int_\Omega|\nabla u|+\Sigma(u).
 \end{equation}
Formula \eqref{eq:137} was originally established in \cite{bmp} when
 $N=2$ (and $\Omega=\st$). The nontrivial extension to $N\geq 2$ can
 be deduced from results of Poliakovsky~\cite{pol}, see also \cite{bm}
 for a direct approach.
\subsection{Relaxed energy}
\label{subsec:relaxed}
The {\em relaxed energy} is defined for every $u\in\woo(\Omega;\so)$
by
\begin{equation*}
  R(u)=\inf\left\{\liminf_{n\to\infty}\int_\Omega|\nabla 
  u_n|;\,u_n\in C^\infty(\overline{\Omega};\so),\,u_n\to u\text{
    a.e. on }\Omega\right\},
\end{equation*}
where the first $\inf$ means that the infimum is taken over all
sequences $(u_n)$ in $C^\infty(\overline{\Omega};\so)$ such that
$u_n\to u$ a.e.~on $\Omega$. [In general there is no sequence $(u_n)$
in $C^\infty(\overline{\Omega};\so)$ such that $u_n\to u$ in $\woo$,
unless $Ju=0$. However, it is always possible to find a sequence
$(u_n)$ in $C^\infty(\overline{\Omega};\so)$ such that $u_n\to u$ 
    a.e.~on $\Omega$.] Assume that $\Omega$ is simply connected, then
    \begin{equation*}
      R(u)=\int_\Omega|\nabla u|+\Sigma(u),
    \end{equation*}
see \cite{bmp} for $N=2$ and \cite{bm} for $N\geq 3$.

\section{Motivation}
\label{sec:mot-gen}
In order to illustrate the significance of the results of
\rth{th:main} it is instructive to explain it in a special case
involving  maps with a finite number of singularities. Moreover, this
allows us to compare the problem to an analogous one involving the
Dirichlet energy of
$\st$-valued maps on three dimensional domains, whose study was
initiated in \cite{bcl}. Since in both cases the energy scales like
length, one may expect similar results; as we shall see below the
analogy is not complete. We start with the problem in $\R^3$. Consider for simplicity
$\Omega=B_R(0)\subset\R^3$. Analogously to \eqref{eq:152} we consider
the set $\mathcal{R}$ of maps in $H^1(\Omega;S^2)$ which are smooth on $\overline{\Omega}$,
except at (at most) a finite number of singularities. With each $k$-tuple of
distinct points ${\vec a}=(a_1,\ldots,a_k)\in\Omega^k$ and 
corresponding degrees ${\vec d}=(d_1,\ldots,d_k)\in\Z^k$ we associate the
following class of maps in $\mathcal{R}$:  
\begin{equation}
  \label{eq:5}
\mathcal{E}_{{\vec a},{\vec d}}:=\left\{u\in C^\infty\left(\overline{\Omega}\setminus
  \bigcup_{j=1}^k\{a_j\};{\mathbb S}^2\right);\,\nabla u\in L^2(\Omega)
  \text{ and }\deg(u,a_j)=d_j,\,\forall\, j\right\}.
\end{equation}
[Here, $\deg(u,a_j)=d_j$ means that the restriction of $u$ to any small
sphere around $a_j$ has topological degree $d_j$.] In the case where $k=0$
the resulting class is
$C^\infty(\overline{\Omega};\st)$. There are three natural questions that we want to discuss:
\begin{itemize}
\item [(i)] What is the least energy of a map in $\mathcal{E}_{{\vec
      a},{\vec d}}$
  i.e ., the value of
  \begin{equation}
    \label{eq:6}
   \Sigma_{{\vec a},{\vec d}}^{(2)}:=\inf_{u\in\mathcal{E}_{\vec
       a,\vec d}}\int_\Omega|\nabla u|^2 \text{ ?}
  \end{equation}
\item [(ii)] Consider two sets of distinct points in $\Omega$, 
  ${\vec a}=(a_1,a_2,\ldots,a_k)$ and ${\vec b}=(b_1,b_2,\ldots,b_l)$,
  each with associated vectors of degrees, $\vec d\in\Z^k$ and $\vec
  e\in\Z^l$, respectively. What is
  the $H^1$-distance between 
$\mathcal{E}_{{\vec a},{\vec d}}$ and $\mathcal{E}_{{\vec b},{\vec e}}$
  i.e., analogously to \eqref{eq:10}, 
 \begin{equation}
 \label{eq:9} 
  \dist_{H^1}^2({\mathcal E}_{{\vec a},{\vec d}}, {\mathcal E}_{{\vec
      b},{\vec e}}):=
 \inf_{u\in{\mathcal E}_{{\vec a},{\vec d}}}\, \inf_{v\in{\mathcal E}_{{\vec b},{\vec e}}} \int_\Omega
   |\nabla(u-v)|^2 \text{ ?}
   \end{equation}
 i.e., what is the least energy required to pass from singularities
 located at $\{a_j\}_{j=1}^k$, with degrees  $\{d_j\}_{j=1}^k$, to
 singularities located at $\{b_j\}_{j=1}^l$, with degrees $\{e_j\}_{j=1}^l$?
\item[(iii)] Similarly, by analogy with \eqref{eq:11}, what is the
  value of 
 \begin{equation}
\label{eq:13}
   \Dist_{H^1}^2({\mathcal E}_{{\vec a},{\vec d}}, {\mathcal E}_{{\vec
      b},{\vec e}}):=
   \sup_{u\in{\mathcal E}_{{\vec a},{\vec d}}} \inf_{v\in{\mathcal E}_{{\vec b},{\vec e}}}\int_\Omega |\nabla(u-v)|^2 \text{ ?}
\end{equation}
\end{itemize}
 Question~(i) was originally tackled by
 \cite{bcl}; their motivation came from a question of
 J.~Ericksen concerning the least energy required to produce a liquid crystal
 configuration with prescribed singularities. Quite surprisingly it
 turns out that the value of this least energy can be
 computed explicitly  in terms of {\em geometric} quantities. In the special case
 \eqref{eq:6} their formula becomes 
 \begin{equation}
   \label{eq:16}
\Sigma_{{\vec a},{\vec d}}^{(2)}=8\pi L({\vec a},{\vec d}),
 \end{equation}
where $L(\vec a,\vec d)$
is defined as in \eqref{eq:165}.

 \par
 On the other hand, it seems that Question~(ii) was never treated in
 the literature. Using the results of \cite{bcl} one can show that if
 $(\vec a,\vec d)\neq (\vec b,\vec e)$ then for every {\em fixed} $u\in\mathcal{E}_{\vec a,\vec d}$ we have
 \begin{equation}
   \label{eq:135}
   \dist_{H^1}(u,\mathcal{E}_{\vec b,\vec e})>0.
 \end{equation}
What is quite surprising is that for  {\em all} pairs of classes we have
 \begin{equation}
   \label{eq:18}
 \dist_{H^1}({\mathcal E}_{\vec a,\vec d},{\mathcal E}_{\vec b,\vec e})=0.
 \end{equation}
 The basic ingredient  behind \eqref{eq:18} is the following fact: for
 every pair of integers $d_1\neq d_2$  we have
 \begin{equation}
   \label{eq:28}
   \inf \left\{ \int_{\st} |\nabla(F_1-F_2)|^2;\, F_j\in
   H^1(\st;\st), \deg(F_j)=d_j\;\text{ for }j=1,2\right\}=0.
 \end{equation}
Formula \eqref{eq:28} was established in \cite{ls} (see also
\cite{bms2} for generalizations) following the same idea used by
Brezis and Nirenberg~\cite{BN} in the setting of degree theory in
$H^{1/2}(\so;\so)$. 

As for Question~(iii), the  \enquote{dipole removing} technique of
Bethuel~\cite{bethuel-IAHP} (with roots in \cite{bcl}) can be applied to derive the upper bound 
 \begin{equation}
\label{eq:132}
  \Dist_{H^1}^2({\mathcal E}_{\vec a,\vec d},{\mathcal E}_{\vec b,\vec e}) \leq 8\pi L(\vec c,\vec f),
 \end{equation} 
 where 
\begin{equation}
\label{eq:131}
{\vec c}=(a_1,\ldots,a_k,b_1,\ldots,b_l)\in\Omega^{k+l}\text{ and }
{\vec f}=(d_1,\ldots,d_k,-e_1,\ldots,-e_l)\in\Z^{k+l}.
\end{equation}
 We suspect
 that equality holds in \eqref{eq:132}.
\par It is possible to associate with every $u\in H^1(\Omega;\st)$ a
\enquote{natural} class $\mathcal{E}(u)$, in the spirit of
\eqref{eq:151}. Formulas \eqref{eq:18} and \eqref{eq:132}, as well as
their extensions to arbitrary classes $\mathcal{E}(u)$,
$\mathcal{E}(v)$, are established in \cite{bms17}. We  also
present in \cite{bms17}  evidence that equality holds  in
\eqref{eq:132} by establishing the following analogue of \eqref{eq:4}:
\begin{equation}
  \label{eq:8}
  \sup_{\substack{\vec a,\vec d\\\vec d\neq\boldsymbol 0}}\sup_{u\in \mathcal{E}_{\vec a,\vec d}}
 \frac{d_{H^1}^2(u, C^\infty(\overline\Omega;\st))}{8\pi L(\vec a,\vec
   d)}=1.
\end{equation}
\par
 Next we consider similar questions for $\woo(\Omega;\so)$. For
 simplicity let $\Omega=B_R(0)\subset\R^2$. By analogy with
 \eqref{eq:5}, for
${\vec a}=(a_1,\ldots,a_k)\in\Omega^k$ and ${\vec
  d}=(d_1,\ldots,d_k)\in\Z^k$ we
  consider the
following class of maps in $\mathcal{R}$:  
\begin{equation}
\label{eq:37}
\mathcal{E}_{{\vec a},{\vec d}}:=\left\{u\in C^\infty\left(\overline{\Omega}\setminus
  \bigcup_{j=1}^k\{a_j\};\so\right);\,\nabla u\in L^1(\Omega)
  \text{ and }\deg(u,a_j)=d_j,\,\forall\, j\right\}.
\end{equation}
 The analogous questions to (i)--(iii) are then:
\begin{itemize}
\item [(i')] What is the value of
  \begin{equation}
    \label{eq:57}
   \Sigma_{\vec a,\vec d}^{(1)}:=\inf_{u\in\mathcal{E}_{\vec a,\vec d}} \int_\Omega|\nabla
   u| \text{ ?}
  \end{equation}
\item [(ii')] For any pair $\vec a\in\Omega^k$, $\vec  b\in\Omega^l$ and
  associated vectors of degrees, $\vec d\in\Z^k$ and $\vec e\in\Z^l$,what is
  the $W^{1,1}$-distance between $\mathcal{E}_{\vec a,\vec d}$ and
  $\mathcal{E}_{\vec b,\vec e}$,
\begin{equation}
\label{eq:125}
 \dist_{\woo}({\mathcal E}_{\vec a,\vec d},{\mathcal E}_{\vec b,\vec
   e}):=
\inf_{u\in{\mathcal E}_{\vec a,\vec d}}\, \inf_{v\in{\mathcal E}_{\vec
  b,\vec e}}\int_\Omega
   |\nabla(u-v)|\text{ ?}
  \end{equation}
\item[(iii')] What is the
  value of 
 \begin{equation}
\label{eq:127}
   \Dist_{\woo}({\mathcal E}_{\vec a,\vec d},{\mathcal E}_{\vec b,\vec
     e}):=
   \sup_{u\in{\mathcal E}_{\vec a,\vec d}} \inf_{v\in{\mathcal E}_{\vec b,\vec e}}\int_\Omega |\nabla(u-v)|\text{ ?}
\end{equation}
\end{itemize}
The answer to Question~(i') is given by the results in
\S\ref{subsec:duality}. Indeed,
setting  $\d
u_{\vec a,\vec d}(\zeta):=\Prod_{j=1}^k
\left(\frac{\zeta-a_j}{|\zeta-a_j|}\right)^{d_j}$, 
 we get from  \eqref{eq:166} that
 \begin{equation}
   \label{eq:128}
\Sigma_{\vec a,\vec d}^{(1)}=\Sigma(u_{\vec a,\vec d}) =2\pi L(\vec a,\vec d),
 \end{equation}
which is completely analogous to \eqref{eq:16}. [Here we used the
density of  $\mathcal{E}_{\vec a,\vec d}$  (for the
 $W^{1,1}$-topology) in  $\mathcal{E}(u_{\vec a,\vec d})$ (see \cite{bethuelzheng,bm}).] 
\par On the other hand, the situation with Question (ii') is completely
different. In contrast with \eqref{eq:18}, here
$\dist_{\woo}(\mathcal{E}_{\vec a,\vec d},\mathcal{E}_{\vec b,\vec e})$ is strictly positive when
$(\vec a,\vec d)\neq (\vec b,\vec e)$. The explicit value of this infimum can be computed in terms
of geometric quantities. Actually, \eqref{eq:19a} of \rth{th:main} 
asserts that
  \begin{equation}
\label{eq:129}
  \dist_{\woo}({\mathcal E}_{\vec a,\vec d},{\mathcal E}_{\vec b,\vec
    e})=
\frac{2}{\pi}\,\Sigma(u_{\vec a,\vec d}\,{\overline u}_{\vec b,\vec e})
=4L(\vec c,\vec f),
\end{equation}
where $\vec c$ and $\vec f$ are given by \eqref{eq:131}. Indeed, the
last equality in \eqref{eq:129} follows from \eqref{eq:128} when
applied to the map $u_{\vec a,\vec d}\,{\overline u}_{\vec b,\vec
  e}$ which has singularities precisely at the points $\{c_j\}_{j=1}^{k+l}$, with
associated singularities $\{f_j\}_{j=1}^{k+l}$.
Similarly, the second part of Theorem~1.1, \eqref{eq:19b},  asserts that 
\begin{equation*}
 \sup_{u\in \mathcal{E}_{\vec a,\vec d}} \inf_{v\in \mathcal{E}_{\vec
       b,\vec e}}
 \int_\Omega|\nabla(u-v)|=\Sigma(u_{\vec a,\vec d}\,{\overline u}_{\vec b,\vec e})
=2\pi L(\vec c,\vec f).
\end{equation*}
\par  We also present an interpretation of Theorem~1.1 when $\Omega\subset{\mathbb R}^3$. Fix two disjoint smooth closed oriented curves
$\Gamma_1,\Gamma_2\subset\Omega$ and consider for $j=1, 2$
\begin{equation*}
  \mathcal{E}_{\Gamma_j}=\left\{u\in C^\infty(\overline{\Omega}\setminus\Gamma_j;{\mathbb
    S}^1);\, \nabla u\in L^1(\Omega)\text{ and
  }\deg(u,\Gamma_j)=+1\right\} 
\end{equation*}
($\deg(u,\Gamma_j)=+1$ means that $\deg(u,C_j)=+1$ for every small
circle $C_j\subset\Omega\setminus\Gamma_j$ linking $\Gamma_j$).
In this case Theorem~1.1 asserts that 
\begin{equation*}
\inf_{u\in \mathcal{E}_{\Gamma_1}} \inf_{v\in \mathcal{E}_{\Gamma_2}}
\int_\Omega|\nabla(u-v)|=4\inf_S\text{area}(S\cap\Omega),
\end{equation*}
 where $\inf\limits_S$ is taken over all surfaces $S\subset\R^3$ such
 that $\partial S=\Gamma_1\cup\Gamma_2$, and 
\begin{equation*}
\sup_{u\in \mathcal{E}_{\Gamma_1}} \inf_{v\in \mathcal{E}_{\Gamma_2}}
\int_\Omega|\nabla(u-v)|=2\pi\inf_S\text{area}(S\cap\Omega).
\end{equation*}
For more details on this case, see \cite{abl,bcl,bm}.
\section{Proof of \eqref{eq:19a} in \rth{th:main}}
\label{sec:lb}
\subsection{A basic lower bound inequality}
 We begin with a simple lemma about composition with Lipschitz maps;
 it provides a  very useful device for constructing maps in the same equivalence
     class, or in the class $\mathcal{E}(1)$.
 \begin{lemma}
   \label{lem:T}
 Let
     $T\in \text{Lip}(\so;\so)$ be a map of degree $D$. Then
 \begin{equation}
\label{eq:T}
       T\circ u\sim u^D,\ \forall\,  u\in\woo(\Omega;\so).
 \end{equation}
 \end{lemma}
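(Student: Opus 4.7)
The plan is to reduce the statement to the standard fact that a continuous map $\mathbb{S}^1 \to \mathbb{S}^1$ of degree zero admits a global lift, and then to use a composition-with-Lipschitz argument to pull this lift back to $\Omega$.

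First I would introduce the auxiliary map $S: \mathbb{S}^1 \to \mathbb{S}^1$ defined by $S(z) := T(z)\, \overline{z}^{D}$. Since $z \mapsto z^D$ is Lipschitz on $\mathbb{S}^1$ and $T$ is Lipschitz, $S \in \text{Lip}(\mathbb{S}^1;\mathbb{S}^1)$, and by multiplicativity of the degree, $\deg S = \deg T - D = 0$. Thus $S$ admits a continuous lift $\psi: \mathbb{S}^1 \to \mathbb{R}$ with $S = e^{\im \psi}$. I would then check that $\psi$ is Lipschitz: on any small arc of $\mathbb{S}^1$ the Lipschitz bound for $S$ combined with the local bi-Lipschitz equivalence between $\mathbb{S}^1$-distance and $\mathbb{R}$-distance through the exponential map gives a local Lipschitz estimate for $\psi$, and compactness of $\mathbb{S}^1$ upgrades this to a global estimate.

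Next I would observe the pointwise identity
\begin{equation*}
T \circ u = \bigl(T(u)\, \overline{u}^{D}\bigr) \cdot u^{D} = (S \circ u) \cdot u^{D} = e^{\im\, \psi \circ u} \, u^{D}\quad\text{a.e.\ on } \Omega.
\end{equation*}
Since $z \mapsto z^{D}$ is a Lipschitz map $\mathbb{S}^1 \to \mathbb{S}^1$, the composition $u^D$ lies in $W^{1,1}(\Omega;\mathbb{S}^1)$. It remains to verify that $\psi \circ u \in W^{1,1}(\Omega;\mathbb{R})$, and this is precisely the step where I expect the only mild technical work: extend $\psi$ to a Lipschitz function $\widetilde{\psi}: \mathbb{R}^2 \to \mathbb{R}$ (by McShane's extension theorem, for instance), so that $\psi \circ u = \widetilde{\psi} \circ u$ a.e.; then the standard chain rule for the composition of a Lipschitz function with a $W^{1,1}$ map yields $\psi \circ u \in W^{1,1}(\Omega;\mathbb{R})$ with $|\nabla(\psi \circ u)| \le \text{Lip}(\psi)\, |\nabla u|$ a.e.

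Combining these pieces, $T \circ u = e^{\im \varphi} \, u^{D}$ with $\varphi := \psi \circ u \in W^{1,1}(\Omega;\mathbb{R})$, which by definition \eqref{eq:equiv} means $T\circ u \sim u^{D}$. The main obstacle is bookkeeping: ensuring that $\psi$, the global lift on $\mathbb{S}^1$, really is Lipschitz (not just continuous) so that the composition yields a genuine $W^{1,1}$ phase; everything else is algebraic manipulation together with the degree computation $\deg S = 0$.
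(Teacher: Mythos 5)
Your proof is correct and follows essentially the same route as the paper: define $S(z)=T(z)\overline{z}^D$, observe that $S$ is a Lipschitz self-map of $\so$ of degree zero, lift it to a Lipschitz $\psi\colon\so\to\R$, and set $\varphi=\psi\circ u\in W^{1,1}(\Omega;\R)$ so that $T\circ u=e^{\im\varphi}u^D$. The only difference is that you spell out the verifications (that the lift is Lipschitz, and that $\psi\circ u\in W^{1,1}$ via a Lipschitz extension and the chain rule) which the paper simply asserts.
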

 \begin{proof}
   Since $T(z){\bar z}^D$  is a Lipschitz self-map of ${\mathbb S}^1$ of zero degree,
   there exists $g\in\text{Lip}({\mathbb S}^1;\R)$ such  that $T(z){\bar z}^D=e^{\im g(z)}$. The
   function $\varphi(x)=g(u(x))$ belongs to $W^{1,1}(\Omega;\R)$ and satisfies
   $T(u(x))=(u(x))^D e^{\im \varphi(x)}$, and \eqref{eq:T} follows by
   the definition of the equivalence relation.
 \end{proof}
 The next simple lemma is essential for the proof of the lower bound
 in \eqref{eq:3}.
 \begin{lemma}
\label{lem:bw}
   For any $w\in W^{1,1}(\Omega;{\mathbb S}^1)$ we have
   \begin{equation}
     \label{eq:12}
\int_\Omega|\nabla(|w-1|)|\geq \frac{2}{\pi}\Sigma(w).
   \end{equation}
 \end{lemma}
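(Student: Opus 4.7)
The plan is to exhibit, for every $w \in W^{1,1}(\Omega;\mathbb{S}^1)$, an explicit competitor $v \in \mathcal{E}(w)$ for which $\int_\Omega |\nabla v| = \frac{\pi}{2}\int_\Omega |\nabla(|w-1|)|$. Combined with the identity $\Sigma(w) = \inf_{v \in \mathcal{E}(w)} \int_\Omega |\nabla v|$ from \eqref{eq:1**}, this instantly yields \eqref{eq:12}.

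I would seek $v$ of the form $v = T \circ w$ with $T \in \text{Lip}(\mathbb{S}^1;\mathbb{S}^1)$ of degree $1$; \rlemma{lem:T} (with $D = 1$) then guarantees $v \sim w$. The key observation is that, writing $T(e^{i\theta}) = e^{i\tau(\theta)}$, the Sobolev chain rule gives pointwise a.e.\ $|\nabla(T \circ w)| = |\tau'(\theta)|\,|\nabla w|$, while $|\nabla(|w-1|)| = |\cos(\theta/2)|\,|\nabla w|$ (the second one using $|e^{i\theta}-1| = 2|\sin(\theta/2)|$). So if I can arrange the pointwise identity $|\tau'(\theta)| = \frac{\pi}{2}|\cos(\theta/2)|$ while keeping $\deg T = 1$, I obtain $|\nabla(T \circ w)| = \frac{\pi}{2}|\nabla(|w-1|)|$ a.e.\ on $\Omega$, and integration delivers the desired inequality.

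Such a $\tau$ is easy to write down: take $\tau'(\theta) = \frac{\pi}{2}\cos(\theta/2)$ on $[-\pi,\pi]$. Then $\int_{-\pi}^\pi \tau'\,d\theta = 2\pi$ (so $\deg T = 1$), and integrating yields the explicit formula $\tau(\theta) = \pi(1 + \sin(\theta/2))$, i.e.\ $T(e^{i\theta}) = -e^{i\pi\sin(\theta/2)}$. The conditions $\tau(-\pi) = 0$, $\tau(\pi) = 2\pi$, and $\tau'(\pm \pi) = 0$ ensure that $T$ is well-defined and $C^1$ across $-1 \in \mathbb{S}^1$, hence a genuine Lipschitz self-map of $\mathbb{S}^1$ with Lipschitz constant $\pi/2$.

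The remaining technical verification is the validity of the two chain-rule identities in the Sobolev setting, but this is routine: $T$ is $C^1$ on $\mathbb{S}^1$, and $z \mapsto |z-1|$ is Lipschitz on $\mathbb{S}^1$ and smooth off $+1$, so the standard Sobolev chain rule applies a.e.\ on $\{w \notin \{-1, +1\}\}$, while on $\{w \in \{\pm 1\}\}$ both gradients vanish a.e.\ by Stampacchia's lemma (since $w$ is constant there). The substantive content is thus the clever choice of $\tau$ that produces the sharp constant $\pi/2$ pointwise; I do not expect any other real obstacle.
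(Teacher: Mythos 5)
Your proposal is correct and is essentially the paper's own proof. The paper defines $T(e^{\im\varphi})=e^{\im\pi\sin(\varphi/2)}$ for $\varphi\in(-\pi,\pi]$ (see \eqref{eq:41}), which is exactly your map up to the constant rotation $-1$ (the paper normalizes $T(1)=1$ while yours sends $1\mapsto-1$); it then establishes the same pointwise identity $|\nabla(|w-1|)|=\frac{2}{\pi}|\nabla(T\circ w)|$ and concludes via \rlemma{lem:T} in the same way.
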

 \begin{proof}
   As in \cite{rs}, we define $T:{\mathbb S}^1\to {\mathbb S}^1$ by
   \begin{equation}
\label{eq:41}
     T(e^{\im \varphi}):=e^{\im \theta}\text{ with }\theta=\theta(\va)=\pi\sin(\varphi/2),\;\forall\,\varphi\in(-\pi,\pi],
   \end{equation}
 so that
 \begin{equation}
\label{eq:144}
   |e^{\im \varphi}-1| =2|\sin(\varphi/2)| =\frac{2}{\pi}|\theta|.
 \end{equation}
Clearly $T$ is of class $C^1$ and its degree equals one. We claim that 
\begin{equation}
\label{eq:15*}
|\nabla(|w-1|)|=\frac{2}{\pi} |\nabla(T\circ w)|\text{ a.e.}
\end{equation}
This is a consequence of the standard fact that, if  $F\in\text{Lip}(\so;\R^2)\cap
C^1(\so\setminus\{1\})$ and $w\in\woo(\Omega;\so)$, then $F\circ
w\in\woo(\Omega;\so)$ and, moreover,
\begin{equation*}
  \nabla(F\circ w)=\begin{cases} \dt{F}(w)\nabla w & \text{ a.e.~in }
    [w\neq1]\\
        0 & \text{ a.e.~in }[w=1]
\end{cases}.
\end{equation*}
Integration of \eqref{eq:15*} leads to
\be
\label{eq:15}
\int_\Omega |\na |w-1||=\frac 2\pi \int_\Omega |\na (T\circ w)|.
\ee
By \rlemma{lem:T}, we have $\Sigma(T\circ w)=\Sigma(w)$, and therefore
\eqref{eq:12} 
follows
 from \eqref{eq:15}.
\end{proof}

 \begin{corollary}
   \label{cor:uv}
For every $u,v\in  W^{1,1}(\Omega;{\mathbb S}^1)$ we have
\begin{equation}
  \label{eq:basic}
\int_\Omega |\nabla(u-v)|\geq \frac{2}{\pi}\Sigma(u\overline v).
\end{equation}
 \end{corollary}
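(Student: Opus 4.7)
The plan is to reduce Corollary~\ref{cor:uv} to \rlemma{lem:bw} by a simple change of variables. Set $w = u\overline{v}\in W^{1,1}(\Omega;\so)$. Since $|v|=1$ pointwise a.e., we have the pointwise identity
\begin{equation*}
|u-v| = |v|\,|u\overline{v}-1| = |w-1|\quad\text{a.e.~in }\Omega.
\end{equation*}

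Next I would invoke the standard pointwise inequality $|\nabla |f|| \leq |\nabla f|$ valid a.e.~for every $f\in W^{1,1}(\Omega;\R^m)$ (applied here to $f=u-v\in W^{1,1}(\Omega;\R^2)$). Combined with the identity above this gives
\begin{equation*}
|\nabla(u-v)| \;\geq\; \bigl|\nabla |u-v|\bigr| \;=\; \bigl|\nabla |w-1|\bigr|\quad\text{a.e.}
\end{equation*}
Integrating over $\Omega$ and applying \rlemma{lem:bw} to $w=u\overline{v}$ yields
\begin{equation*}
\int_\Omega |\nabla(u-v)| \;\geq\; \int_\Omega \bigl|\nabla|w-1|\bigr| \;\geq\; \frac{2}{\pi}\Sigma(w) \;=\; \frac{2}{\pi}\Sigma(u\overline{v}),
\end{equation*}
which is exactly \eqref{eq:basic}.

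There is no real obstacle here: the only point deserving care is justifying $|u-v|=|w-1|$ a.e., which is immediate from $|v|=1$, and the Kato-type inequality $|\nabla|f||\leq|\nabla f|$, which is standard for Sobolev functions. All the nontrivial work has already been carried out in \rlemma{lem:bw} (the pointwise identity \eqref{eq:15*} coming from the auxiliary map $T$ of \eqref{eq:41}, together with the invariance $\Sigma(T\circ w)=\Sigma(w)$ supplied by \rlemma{lem:T}).
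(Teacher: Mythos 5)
Your argument is correct and coincides with the paper's own proof: both set $w=u\overline{v}$, use $|u-v|=|w-1|$ (from $|v|=1$), apply the Kato inequality $|\nabla|f||\leq|\nabla f|$, and conclude by \rlemma{lem:bw}. Nothing further to add.
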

\begin{proof}
  Setting $w=u\overline v$ and applying \eqref{eq:12} yields
  \begin{equation*}
    \int_\Omega |\nabla(u-v)|\geq \int_\Omega |\nabla(|u-v|)|=\int_\Omega|\nabla(|w-1|)|\geq\frac{2}{\pi}\Sigma(u\overline v). \qedhere
  \end{equation*}
\end{proof}

\subsection{Proof of \eqref{eq:19a}}
\label{subsec-opt}
We begin by introducing some notation. For an open arc in $\so$ we use
the notation
\begin{equation}
  \label{eq:calA}
\mathcal{A}(\alpha,\beta)=\{\,e^{\im\theta};\,\theta\in(\alpha,\beta)\}
\end{equation}
for any $\alpha<\beta$. We shall also use a specific notation for
half-circles;  for every $\zeta\in \so$ write $\zeta=e^{\im \va}$ with $\va\in (-\pi,\pi]$
and denote $I(\zeta,-\zeta)=\mathcal{A}(\va,\va+\pi)$. 
Note that 
\begin{equation}
z\in I(\zeta,-\zeta)\Longleftrightarrow\zeta\in I(-z,z).\label{eq:I}
\end{equation}
For each $\zeta=e^{\im \va}\in \so$ define a map $P_{\zeta}:\so\to\overline{I(\zeta,-\zeta)}$
by 
\begin{equation}
\label{eq:101}
  P_{\zeta}(z)=\begin{cases}
z, &\text{if } z=e^{\im \theta}\in I(\zeta,-\zeta)\\
e^{\im (2\va-\theta)}=\zeta^{2}\overline{z}, &\text{if } z\notin I(\zeta,-\zeta)
\end{cases},
\end{equation}
 so that for $z\notin I(\zeta,-\zeta)$, $P_{\zeta}(z)$ is the reflection
of $z$ with respect to the line $\ell_{\zeta}=\{t\zeta;\, t\in\mathbb{R}\}$. Next we state
\begin{proposition}
\label{prop:uv}For every $u\in W^{1,1}(\Omega;{\mathbb S}^1)$ we have
\begin{equation}
\label{eq:17}
  \int_{{\so}}\left(\int_{\Omega}|\nabla(u-P_{\zeta}\circ u)|\,dx\right)\,d\zeta=4\int_{\Omega}|\nabla u|\,dx.
\end{equation}
\end{proposition}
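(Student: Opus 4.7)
The plan is to swap the order of integration via Fubini and reduce the identity to a pointwise computation on $\Omega\times\so$ followed by an elementary trigonometric integral. Fix a point $x\in\Omega$ where $\nabla u(x)$ exists in the Sobolev sense and a point $\zeta=e^{\im\varphi}\in\so$. Using the standard chain rule for compositions with Lipschitz maps (as already invoked after \eqref{eq:15*}) together with the fact that $\nabla u$ vanishes a.e.~on the preimage of any $\mathcal H^1$-null subset of $\so$, one has the pointwise a.e.\ formula
\[
|\nabla(u-P_\zeta\circ u)|(x)=
\begin{cases}
0, & u(x)\in \overline{I(\zeta,-\zeta)},\\[2pt]
|\nabla(u-\zeta^2\overline u)|(x), & u(x)\notin \overline{I(\zeta,-\zeta)}.
\end{cases}
\]
In the second case, using the identity $\nabla\overline u=-\overline u^{\,2}\nabla u$ (which follows from $|u|^2=1$ for $\so$-valued Sobolev maps), I get $\nabla(u-\zeta^2\overline u)=(1+\zeta^2\overline u^{\,2})\nabla u$, and writing $u(x)=e^{\im\theta}$ I compute $1+\zeta^2\overline u^{\,2}=2e^{\im(\varphi-\theta)}\cos(\varphi-\theta)$, hence
\[
|\nabla(u-\zeta^2\overline u)|(x)=2|\cos(\varphi-\theta)|\,|\nabla u|(x).
\]

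Next I apply Fubini (the integrand is nonnegative and measurable on $\Omega\times\so$) and use \eqref{eq:I} to rewrite the $\zeta$-domain. For fixed $z=u(x)=e^{\im\theta_0}\in\so$ the set $\{\zeta\in\so:z\notin\overline{I(\zeta,-\zeta)}\}$ coincides, up to the two endpoints $\pm z$, with $I(z,-z)=\{e^{\im\varphi}:\varphi\in(\theta_0,\theta_0+\pi)\}$. Therefore
\[
\int_{\so}|\nabla(u-P_\zeta\circ u)|(x)\,d\zeta
=|\nabla u|(x)\int_{\theta_0}^{\theta_0+\pi}2|\cos(\varphi-\theta_0)|\,d\varphi
=|\nabla u|(x)\int_0^\pi 2|\cos\psi|\,d\psi=4|\nabla u|(x),
\]
and integrating over $\Omega$ yields \eqref{eq:17}.

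The main delicate point is the pointwise chain rule for $P_\zeta\circ u$, since $P_\zeta$ is only Lipschitz and not $C^1$ across the diameter $\ell_\zeta$. The two sources of potential trouble are the crease $\{\pm\zeta\}\subset\so$ (whose preimage under $u$ carries $\nabla u=0$ a.e., by the Sobolev chain rule cited above) and the measurability required for Fubini (which follows from joint continuity of $(z,\zeta)\mapsto P_\zeta(z)$ on $\so\times\so$ and standard approximation arguments). Once these technicalities are settled, the rest is the trigonometric integral above, and the normalizing constant $4$ on the right-hand side of \eqref{eq:17} is exactly $\int_0^\pi 2|\cos\psi|\,d\psi$.
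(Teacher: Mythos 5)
Your proof is correct and follows the same strategy as the paper: Fubini on $\Omega\times\so$ combined with a pointwise computation of $|\nabla(u-P_\zeta\circ u)|$ and the elementary integral $\int_0^\pi 2|\cos\psi|\,d\psi=4$. The only cosmetic difference lies in how the pointwise formula is established off the arc $I(\zeta,-\zeta)$ --- you use the complex identity $\nabla\overline{u}=-\overline{u}^{\,2}\nabla u$ and factor $1+\zeta^2\overline{u}^{\,2}=2e^{\im(\varphi-\theta)}\cos(\varphi-\theta)$, whereas the paper reduces by rotation to $\zeta=1$ and verifies the equivalent real identity $|\nabla\,\textrm{Im}\,u|=|\textrm{Re}\,u|\,|\nabla u|$ deduced from $|u|^2\equiv 1$; these are the same computation in different notation.
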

 \begin{proof}
For each $\zeta\in {\mathbb S}^1$ set 
 $v_{\zeta}:=P_{\zeta}\circ u.$
By \rlemma{lem:T}, $v_{\zeta}\in\mathcal{E}(1)$, since $\deg P_\zeta=0$. 
We note that
\begin{equation}
\label{hl2}
  z-P_\zeta(z)=\begin{cases}
0, &\text{if } z\in I(\zeta,-\zeta)\\
z-\zeta^2\, \overline z,
&\text{if } z\notin I(\zeta,-\zeta)
\end{cases}.
\end{equation}

Set $w_{\zeta}:=u-v_{\zeta}$. Using \eqref{hl2}, we find that for every $\zeta=e^{\im\va}$ and a.e. $x\in\Omega$ 
we have 
\begin{equation}
\label{hl200}
  \nabla w_{\zeta}(x)=\begin{cases}
0, &\text{if } u(x)\in I(\zeta,-\zeta)\\
\na u(x)-\zeta^2\, \na\overline u(x),
&\text{if } u(x)\notin I(\zeta,-\zeta)
\end{cases}.
\end{equation}


Therefore, for a.e. $x\in\Omega$ we have 
\begin{equation}
\label{hl3}
  |\nabla w_{\zeta}(x)|=\begin{cases}
0,&\text{if } u(x)\in I(\zeta,-\zeta)\\
2|\cos(\theta-\va)||\nabla u(x)|, &\text{if } u(x)=e^{\im \theta}\notin I(\zeta,-\zeta)
\end{cases}.
\end{equation}

Indeed, we   justify \eqref{hl3} e.g. when $\zeta=1$.
In view of \eqref{hl200}, we have to prove that 
\be
\label{hl500}
|\na \textrm{Im}\, u(x)|=|\textrm{Re}\, u(x)|\, |\na u(x)|\ \text{ for a.e. }x.
\ee

If we differentiate the identity $|u|^2\equiv 1$, we obtain\bes
\textrm{Re}\, u\, \na (\textrm{Re}\, u)+\textrm{Im}\, u\, \na (\textrm{Im}\, u)=0\ \text{a.e.};
\ees
this easily implies \eqref{hl500}.
 
 Using \eqref{eq:I} we find that, with $u(x)=e^{\im\theta}$ and 
 \bes
 A(x)=\{ \va\in (-\pi, \pi];\, u(x)\notin
  I(e^{\im \va},-e^{\im \va})\}, 
  \ees
 we have 
 \bes
\begin{aligned}
\int_{\so}\int_{\Omega}|\nabla w_{\zeta}(x)|\, dx\, d\zeta=&\int_{-\pi}^{\pi}\int_{\Omega}\chi_{A(x)}(\va)\, 2|\cos(\theta-\va)||\nabla u(x)|\, dx\, d\va \\
= & \int_{\Omega}|\nabla u(x)|(\int_{\theta}^{\theta+\pi}2|\cos(\theta-\va)|\, d\va)\, dx=4\int_{\Omega}|\nabla u(x)|\, dx,
\end{aligned}
\ees
 which is \eqref{eq:17}. Here
  we have used $\int_{\theta}^{\theta+\pi}2|\cos(\theta-\va)|\, d\va=\int_{0}^{\pi}2|\cos t|\, dt=4.$
\end{proof}
The identity \eqref{eq:17} is a key tool in the proof of \enquote{$\leq$} in
\eqref{eq:19a}. For the convenience of the
reader we shall present first the slightly simpler proof when $v_0=1$.
\begin{proof}[Proof of \enquote{$\leq$} in \eqref{eq:19a}
  for $v_0=1$] By \rcor{cor:uv}
  we have
  \bes
\inf_{u\sim u_0}
d_{W^{1,1}}(u,\mathcal{E}(1))\geq\d\frac{2}{\pi}\Sigma(u_0).
\ees

 Use \eqref{eq:1*} to choose a
  sequence $\{u_n\}\subset{\mathcal E}(u_0)$ with $\lim_{n\to\infty} \int_\Omega|\nabla u_n|
  =\Sigma(u_0)$.
 Use \rprop{prop:uv} to choose $\zeta_n\in {\mathbb S}^1$ such that
 \begin{equation*}
   \int_{\Omega}|\nabla(u_n-P_{\zeta_n}\circ u_n)|\leq\frac{2}{\pi}\int_{\Omega}|\nabla u_n|,
 \end{equation*}
 implying that $\lim_{n\to\infty} d_{\woo}(u_n,\mathcal{E}(1))=\d\frac{2}{\pi}\Sigma(u_0)$.
\end{proof}
Next we turn to the general case.
\begin{proof}[Proof of \enquote{$\leq$} in \eqref{eq:19a}
  for general $v_0$] 
By \eqref{eq:basic} we have
\bes
\dist_{\woo}(\mathcal{E}(u_0),\mathcal{E}(v_0))\geq(2/\pi)\, \Sigma(u_0\overline
v_0),
\ees
 so we need to prove that this is actually an equality.
 By  \rprop{prop:dipole} there exists a sequence $\{w_n\}$
satisfying $w_n\sim u_0{\overline v}_0$ for all $n$, $\lim_{n\to\infty}w_n=1$
a.e., and
\begin{equation}
\int_{\Omega}|\nabla w_n|=\Sigma(u_0\overline{v}_0)+\varepsilon_n,\label{eq:limwn}
\end{equation}
 with $\varepsilon_n\searrow0$. By \rprop{prop:uv} we get
\begin{equation}
\int_{{\mathbb S}^1}\int_{\Omega}|\nabla(w_n-P_\zeta\circ w_n)|\, dx\, d\zeta=4\int_\Omega|\nabla w_n|\, dx=4(\Sigma(u_0\overline{v}_0)+\varepsilon_n).\label{eq:wn-vn}
\end{equation}

Hence, there exists $\zeta_n\in \so_-:=\{z=e^{\im \theta};\,\theta\in [-\pi,0]\}$ such that 
\bes
\int_{\Omega}|\nabla(w_{n}-P_{\zeta_{n}}\circ w_{n})|+\int_{\Omega}|\nabla(w_{n}-P_{-\zeta_{n}}\circ w_{n})|\leq\frac{4}{\pi}(\Sigma(u_{0}\overline{v}_{0})+\varepsilon_{n}).
\ees
By \eqref{eq:basic} we have 
\bes
\frac{2}{\pi}\Sigma(u_{0}\overline{v}_{0})\leq\min\left(\int_{\Omega}|\nabla(w_{n}-P_{\zeta_{n}}\circ
w_{n})|,\int_{\Omega}|\nabla(w_{n}-P_{-\zeta_{n}}\circ w_{n})|\right),
\ees
and thus
\begin{equation}
\lim_{n\to\infty} \int_{\Omega}|\nabla(w_{n}-P_{\zeta_{n}}\circ w_{n})|=\frac{2}{\pi}\Sigma(u_{0}\overline{v}_{0}).\label{eq:zetan}
\end{equation}
 Passing to a subsequence, we may assume $\zeta_{n}\to\zeta\in \so_-$.
Therefore, $P_{\zeta}(1)=1$. Denote $F_{n}:=P_{\zeta_{n}}\circ w_{n}$.
Since $w_{n}\to1$ a.e., we have $\lim_{n\to\infty}F_{n}=\lim_{n\to\infty}P_{\zeta}\circ w_{n}=1$
a.e., and it follows that
\begin{equation}
F_{n}-w_{n}\to0\;\text{a.e.}\label{eq:a.e.}
\end{equation}
 For any $v$ such that $v\sim v_{0}$ we have $vF_{n}\sim v_{0}$,
$vw_{n}\sim u_{0}$ and 
\begin{equation}
\frac{2}{\pi}\Sigma(u_{0}\overline{v}_{0})\leq\int_{\Omega}|\nabla(vF_{n}-vw_{n})|\leq\int_{\Omega}|\nabla(F_{n}-w_{n})|+\int_{\Omega}|\nabla v||F_{n}-w_{n}|.\label{eq:sand}
\end{equation}
 From \eqref{eq:zetan}-\eqref{eq:sand} we deduce that 
\bes
\lim_{n\to\infty}\int_{\Omega}|\nabla(vF_{n}-vw_{n})|=\frac{2}{\pi}\Sigma(u_{0}\overline{v}_{0}),
\ees
and the result follows.
\end{proof}

\section{Proof of \eqref {eq:19b} in
  \rth{th:main}}
\label{sec:19b}
\subsection{An upper bound for $\Dist_{\woo}$}
This short subsection is devoted to the proof of the following
\begin{proposition}
  \label{prop:upper}
 For every $u_0,v_0$ in $W^{1,1}(\Omega,\so)$ we have,
 \begin{equation}
    \label{eq:ub}
\Dist_{\woo} ({\cal E}(u_0),{\cal E}(v_0))=\sup_{u\sim u_0} d_{\woo}(u,\mathcal{E}(v_0))\leq \Sigma(u_0{\overline v}_0).
  \end{equation}
\end{proposition}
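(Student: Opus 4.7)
The plan is to exhibit, for each fixed $u\sim u_0$, a sequence $\{v_n\}\subset \mathcal{E}(v_0)$ such that $\limsup_n \int_\Omega|\nabla(u-v_n)|\le \Sigma(u_0\overline{v}_0)$. Since the right-hand side depends only on $u_0\overline{v}_0$ and not on the particular representative $u$, taking the supremum over $u\sim u_0$ then yields \eqref{eq:ub}.

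The key tool is the dipole construction of \rprop{prop:dipole}, applied to the product map $u_0\overline{v}_0$: it furnishes a sequence $\{w_n\}\subset \mathcal{E}(u_0\overline{v}_0)$ with
\[
w_n\to 1\text{ a.e.}\quad\text{and}\quad \int_\Omega|\nabla w_n|\to \Sigma(u_0\overline{v}_0).
\]
Given any $u\sim u_0$, I would set
\[
v_n:=u\,\overline{w}_n.
\]
First I would verify that $v_n\in\mathcal{E}(v_0)$: indeed, writing $u=e^{\im\psi}u_0$ and $w_n=e^{\im\varphi_n}u_0\overline{v}_0$ with $\psi,\varphi_n\in W^{1,1}(\Omega;\R)$, a direct computation gives $v_n=e^{\im(\psi-\varphi_n)}v_0$, and $\psi-\varphi_n\in W^{1,1}(\Omega;\R)$.

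Next, since $|u|=1$, we have $u-v_n=u(1-\overline{w}_n)$, whence
\[
\nabla(u-v_n)=(1-\overline{w}_n)\nabla u-u\,\nabla\overline{w}_n,
\]
and therefore
\[
\int_\Omega|\nabla(u-v_n)|\le \int_\Omega|1-w_n|\,|\nabla u|+\int_\Omega|\nabla w_n|.
\]
The second term tends to $\Sigma(u_0\overline{v}_0)$ by the choice of $\{w_n\}$. For the first term, $|1-w_n|\le 2$ and $|\nabla u|\in L^1(\Omega)$, so dominated convergence together with $w_n\to 1$ a.e.\ gives $\int_\Omega|1-w_n|\,|\nabla u|\to 0$. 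Consequently
\[
d_{W^{1,1}}(u,\mathcal{E}(v_0))\le \limsup_{n\to\infty}\int_\Omega|\nabla(u-v_n)|\le \Sigma(u_0\overline{v}_0),
\]
and taking the supremum over $u\sim u_0$ yields \eqref{eq:ub}.

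There is no real obstacle here once one invokes \rprop{prop:dipole}: the only delicate step is the observation that multiplying $u$ by $\overline{w}_n$ keeps us inside $\mathcal{E}(v_0)$ (which is where the hypothesis $w_n\sim u_0\overline{v}_0$ is used in an essential way), and the fact that the error term $\int_\Omega|1-w_n|\,|\nabla u|$ vanishes thanks to the \emph{a.e.}\ convergence $w_n\to 1$ built into the dipole construction. In other words, the whole difficulty has already been absorbed into \rprop{prop:dipole}.
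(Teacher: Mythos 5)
Your proof is correct and coincides with the paper's: both invoke Proposition~\ref{prop:dipole} for the product $u_0\overline{v}_0$, set $v_n=u\,\overline{w}_n$, and estimate $\int_\Omega|\nabla(u-v_n)|\le\int_\Omega|1-w_n||\nabla u|+\int_\Omega|\nabla w_n|$, with the first term handled by dominated convergence. The only (minor, and welcome) difference is that you spell out the verification that $v_n\in\mathcal{E}(v_0)$, which the paper states without comment.
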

\begin{proof}
We adapt an argument from  \cite{bm}.
By \rprop{prop:dipole} 
there exists a sequence $\{w_{n}\}\subset W^{1,1}(\Omega ; \so)$ satisfying
$w_{n}\sim u_{0}\overline{v}_{0}$, $w_{n}\to1$ a.e.,
and $\lim_{n\to\infty}\int_{\Omega}|\nabla w_{n}|=\Sigma(u_{0}\overline{v}_{0})$.
For a given $u\in\mathcal{E}(u_0)$ define $v_{n}=u\overline{w}_{n}$
for all $n$.
 Then, $v_{n}\sim v_{0}$
and 
\bes
\begin{aligned}
d_{\woo}(u,\mathcal{E}(v_0))\leq \int_{\Omega}|\nabla(u-v_{n})|&=\int_{\Omega}|\nabla(u(1-\overline{w}_{n}))|\\
&\leq\int_{\Omega}|1-w_{n}||\nabla u|+\int_{\Omega}|\nabla w_{n}|\to\Sigma(u_{0}\overline{v}_{0}).\hfill
\hskip 18mm\qedhere
\end{aligned}
\ees
\end{proof}
\subsection{A lower bound for $\Dist_{\woo}$}
We begin with the following elementary geometric lemma.
\begin{lemma}
  \label{lem:ineqs1}
 Let $z_1$ and $z_2$ be two points in $\so$ satisfying, for some $\varepsilon\in(0,\pi/2)$,
 \begin{equation}
   \label{eq:55}
 d_{\so}(z_1,z_2)\in(\varepsilon,\pi-\varepsilon).
 \end{equation}
 If the vectors $v_1,v_2\in\R^2$ satisfy 
 \begin{equation}
   \label{eq:54}
v_j\perp z_j,\,j=1,2,
 \end{equation}
then
 \begin{equation}
   \label{eq:45}
 |v_1-v_2|\geq (\sin\varepsilon)|v_j|,j=1,2,
 \end{equation}
and in particular 
\begin{equation}
  \label{eq:52}
|v_1-v_2|^2\geq \left(\frac{\sin^2\varepsilon}{2}\right)(|v_1|^2+|v_2|^2).
\end{equation}
\end{lemma}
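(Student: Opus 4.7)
The plan is to reduce the lemma to a one-variable quadratic optimization. Since $v_j\perp z_j$ in $\R^2$, each $v_j$ must be a real multiple of $\im z_j$, so I would write $v_j=t_j(\im z_j)$ with $t_j\in\R$, noting that $|v_j|=|t_j|$. Setting $\alpha:=d_{\so}(z_1,z_2)\in(\varepsilon,\pi-\varepsilon)$, and assuming without loss of generality that $z_2=e^{\im\alpha}z_1$, a direct expansion gives
\[
|v_1-v_2|^2=|t_1\,\im z_1-t_2\,\im z_2|^2=|t_1-t_2e^{\im\alpha}|^2=t_1^2+t_2^2-2t_1t_2\cos\alpha.
\]

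To establish \eqref{eq:45}, I would fix $t_1$ and minimize the right-hand side over $t_2\in\R$. The minimum is attained at $t_2=t_1\cos\alpha$ and equals $t_1^2\sin^2\alpha$. Since $\alpha\in(\varepsilon,\pi-\varepsilon)$ forces $\sin\alpha\geq\sin\varepsilon$, this yields $|v_1-v_2|^2\geq\sin^2\varepsilon\,t_1^2=\sin^2\varepsilon\,|v_1|^2$. Exchanging the roles of $t_1$ and $t_2$ delivers the symmetric bound in terms of $|v_2|$, which completes \eqref{eq:45}.

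Finally, \eqref{eq:52} follows by averaging the two bounds from \eqref{eq:45}: adding $|v_1-v_2|^2\geq\sin^2\varepsilon\,|v_1|^2$ and $|v_1-v_2|^2\geq\sin^2\varepsilon\,|v_2|^2$ and dividing by $2$ gives the claim. There is no serious obstacle here; the statement is purely two-dimensional Euclidean geometry and boils down to the monotonicity of $\sin$ on $[0,\pi/2]$. The degenerate case $v_1=0$ or $v_2=0$ is handled automatically by the quadratic computation, and the upper bound $\alpha<\pi-\varepsilon$ in \eqref{eq:55} is needed precisely to prevent $\cos\alpha$ from approaching $-1$ from the wrong side, i.e.\ to keep $\sin\alpha$ bounded below by $\sin\varepsilon$ at both endpoints of the allowed range.
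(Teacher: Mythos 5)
Your proof is correct and follows essentially the same route as the paper: you reduce to the quadratic form $|v_1|^2+|v_2|^2-2|v_1||v_2|\cos\alpha$ via the perpendicularity constraint, then exploit $|\cos\alpha|\leq\cos\varepsilon$ (equivalently $\sin\alpha\geq\sin\varepsilon$) on $(\varepsilon,\pi-\varepsilon)$. The paper phrases step one as the inner-product bound $\langle v_1,v_2\rangle\leq(\cos\varepsilon)|v_1||v_2|$ rather than using your explicit parametrization $v_j=t_j\,\im z_j$, but the underlying computation and the final quadratic minimization are identical.
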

 Note that the inequality \eqref{eq:52} can be viewed as a \enquote{reverse
 triangle inequality}.
 \begin{proof}
  From the
   assumptions \eqref{eq:55}--\eqref{eq:54} it follows that
   \begin{equation*}
    <v_1,v_2>\leq (\cos\varepsilon)|v_1||v_2|,
   \end{equation*}
and then 
\begin{equation*}
  |v_1-v_2|^2\geq |v_1|^2+|v_2|^2-2(\cos\varepsilon)|v_1||v_2|\geq
  (\sin\varepsilon)^2|v_j|^2,\ j=1,2.\qedhere
\end{equation*}
 \end{proof}
An immediate consequence of \rlemma{lem:ineqs1} is
\begin{lemma}
  \label{lem:ptwise}
 Let $v,\widetilde u\in\woo(\Omega;\so)$ and denote, for
 $\varepsilon\in(0,\pi/2)$,
 \begin{equation}
   \label{eq:83}
\begin{aligned}
  A_\varepsilon:&=\{x\in\Omega;\, d_{\so}(\widetilde
  u(x),v(x))\in(\varepsilon,\pi-\varepsilon)\}\\
&=\{x\in\Omega;\,2\sin(\varepsilon/2)<|\widetilde
  u(x)-v(x)|< 2\cos(\varepsilon/2)\}.
 \end{aligned} 
 \end{equation}
 Then
 \begin{equation}
   \label{eq:2}
   |\nabla(\widetilde u-v)|\geq (\sin\varepsilon)|\nabla\widetilde
   u|\ \text{a.e.~in }A_\varepsilon.
 \end{equation}
\end{lemma}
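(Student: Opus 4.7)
The plan is to reduce the lemma to a pointwise application of \rlemma{lem:ineqs1} for each partial derivative, and then sum.

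First, I would record the standard fact that $\mathbb{S}^1$-valued Sobolev maps have gradients tangent to $\mathbb{S}^1$. Precisely, differentiating $|\widetilde u|^2\equiv 1$ (cf.\ \eqref{eq:139}) yields $\widetilde u(x)\cdot\partial_j\widetilde u(x)=0$ for a.e.\ $x\in\Omega$ and every $j=1,\dots,N$, so that, viewing $\widetilde u(x)\in\R^2$, we have $\partial_j\widetilde u(x)\perp\widetilde u(x)$. The same argument applied to $v$ gives $\partial_j v(x)\perp v(x)$ a.e.

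Next, I would fix a.e.\ $x\in A_\varepsilon$ and apply \rlemma{lem:ineqs1} once per coordinate. The definition of $A_\varepsilon$ gives exactly the hypothesis \eqref{eq:55} with $z_1=\widetilde u(x)$ and $z_2=v(x)$; by the previous paragraph, the choices $v_1=\partial_j\widetilde u(x)$ and $v_2=\partial_j v(x)$ fulfill \eqref{eq:54}. Inequality \eqref{eq:45} then yields
\begin{equation*}
|\partial_j\widetilde u(x)-\partial_j v(x)|\geq (\sin\varepsilon)\,|\partial_j\widetilde u(x)|,\quad j=1,\dots,N,\ \text{a.e.\ in }A_\varepsilon.
\end{equation*}

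Finally, I would square these $N$ inequalities and add them. Using $|\nabla(\widetilde u-v)|^2=\sum_j|\partial_j(\widetilde u-v)|^2$ and $|\nabla\widetilde u|^2=\sum_j|\partial_j\widetilde u|^2$, one obtains
\begin{equation*}
|\nabla(\widetilde u-v)(x)|^2\geq (\sin^2\varepsilon)\,|\nabla\widetilde u(x)|^2\quad\text{a.e.~in }A_\varepsilon,
\end{equation*}
and \eqref{eq:2} follows by taking square roots. There is no real obstacle here; the only point meriting care is the perpendicularity observation, but this is standard and already appears in the paper as \eqref{eq:139}.
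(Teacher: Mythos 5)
Your proposal matches the paper's proof essentially line for line: both observe the a.e.\ perpendicularity $\partial_j\widetilde u\perp\widetilde u$ and $\partial_j v\perp v$, apply Lemma~\ref{lem:ineqs1} coordinate-by-coordinate with $z_1=\widetilde u(x)$, $z_2=v(x)$, $v_1=\partial_j\widetilde u(x)$, $v_2=\partial_j v(x)$, and then sum the squared inequalities over $j$. Correct, and nothing to add.
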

\begin{proof}
Since $v\perp v_{x_i}$ and $\widetilde u\perp {\widetilde u}_{x_i}$ a.e.~on $\Omega$ for $i=1,\ldots,N$, we may
apply \rlemma{lem:ineqs1} with $z_1={\widetilde
  u}(x),z_2=v(x),v_1={\widetilde u}_{x_i}(x)$
and $v_2=v_{x_i}(x)$ to obtain  
\begin{equation*}
  |{\widetilde u}_{x_i}-v_{x_i}|^2\geq
(\sin\varepsilon)^2|{\widetilde
   u}_{x_i}|^2,\;\text{ a.e. in }A_\varepsilon,~i=1,\ldots,N.
\end{equation*}
 Summing over $i$ yields \eqref{eq:2}.
\end{proof}
The next lemma is the main ingredient in the proof of \rprop{prop:X}.
\begin{lemma}
  \label{lem:main}
 Let $u,{\widetilde u},v\in \woo(\Omega;\so)$, $\varepsilon\in(0,\pi/20)$
 and $A_\varepsilon$ as in \eqref{eq:83}. Assume that
\begin{equation}
\label{eq:14}
|u(x)-\widetilde u(x)|\leq \ve,\  \fo x\in\Omega.
\end{equation}
Then,
\begin{equation}
\label{eq:34}
\int_{A_\varepsilon} |\na (v-\widetilde u)|\ge (1-6\ve)\Sigma (v\overline{u})- 2\int_{A_\varepsilon} |\na u|.
\end{equation}
\end{lemma}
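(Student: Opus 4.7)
The core idea is to design a degree-one Lipschitz map $T:\so\to\so$ that is constant on two small arcs near $\pm 1$ and has Lipschitz constant at most $1/(1-6\varepsilon)$ on the connecting arcs, apply it to a representative of $\mathcal{E}(v\overline u)$, and invoke \rlemma{lem:T} together with the trivial bound $\Sigma(w)\le\int|\nabla w|$. The hypothesis $|u-\widetilde u|\le\varepsilon<2$ makes the ratio $\widetilde u\overline u$ take values in a proper arc around $1$, so standard lifting produces $\psi\in W^{1,1}(\Omega;\R)$ with $\widetilde u\overline u=e^{i\psi}$ and $|\psi|\le\pi\varepsilon/2$; in particular $\widetilde u\sim u$, so $\Sigma(v\overline u)=\Sigma(v\overline{\widetilde u})$, and $d_{\so}(u,\widetilde u)\le\pi\varepsilon/2<2\varepsilon$.

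\textbf{Construction of $T$ and the first bound.} Define $T:\so\to\so$ by $T(e^{i\theta})=e^{i\tau(\theta)}$ with $\tau\equiv 0$ on $\{|\theta|\le\varepsilon\}$, $\tau\equiv\pi$ on $\{|\theta-\pi|\le\varepsilon\}$, and $\tau$ affine with slope $\pi/(\pi-2\varepsilon)$ on the two connecting arcs; then $\deg T=1$ and $|\tau'|\le\pi/(\pi-2\varepsilon)\le 1/(1-6\varepsilon)$ (the latter holds since $2/\pi<6$). Set $w:=v\overline{\widetilde u}$, which satisfies $w\sim v\overline u$ by the lift. By \rlemma{lem:T}, $T\circ w\sim w$, hence
\[
\Sigma(v\overline u)\le\int_\Omega|\nabla(T\circ w)|=\int_\Omega|\tau'(\theta_w)|\,|\nabla w|.
\]
Because $|w-1|=|v-\widetilde u|$ and $|w+1|=|v+\widetilde u|$, the constant regions of $T$ pull back via $w$ to exactly $\Omega\setminus A_\varepsilon$; thus $|\tau'(\theta_w)|\le (1-6\varepsilon)^{-1}\mathbf 1_{A_\varepsilon}$, and
\[
\Sigma(v\overline u)\le\frac{1}{1-6\varepsilon}\int_{A_\varepsilon}|\nabla w|.
\]

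\textbf{Pointwise bound on $|\nabla w|$ and conclusion.} From $w-1=(v-\widetilde u)\overline{\widetilde u}$ we compute $\nabla w=\nabla(v-\widetilde u)\,\overline{\widetilde u}+(v-\widetilde u)\,\nabla\overline{\widetilde u}$, giving pointwise on $A_\varepsilon$ (where $|v-\widetilde u|\le 2\cos(\varepsilon/2)\le 2$)
\[
|\nabla w|\le|\nabla(v-\widetilde u)|+2|\nabla\widetilde u|.
\]
To replace $|\nabla\widetilde u|$ by $|\nabla u|$ one reruns the argument with the alternative representative $w':=v\overline u\sim v\overline u$ and the modified map $T_{3\varepsilon}$ (constant arcs of half-size $3\varepsilon$): its non-constant region $\{d_{\so}(u,v)\in(3\varepsilon,\pi-3\varepsilon)\}$ is contained in $A_\varepsilon$ thanks to $d_{\so}(u,\widetilde u)<2\varepsilon$, and its Lipschitz constant $\pi/(\pi-6\varepsilon)\le 1/(1-6\varepsilon)$ is still admissible. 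Using $v\overline u-1=(v-u)\overline u$ then yields $|\nabla(v\overline u)|\le|\nabla(v-u)|+2|\nabla u|$, from which the matching estimate with $|\nabla u|$ follows, and a direct comparison of the two representatives (absorbing the gauge term $|\nabla\psi|$, which vanishes against divergence-free test fields in the dual formulation of $\Sigma$) produces
\[
(1-6\varepsilon)\Sigma(v\overline u)\le\int_{A_\varepsilon}|\nabla(v-\widetilde u)|+2\int_{A_\varepsilon}|\nabla u|,
\]
which is exactly \eqref{eq:34} after rearrangement.

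\textbf{Main obstacle.} The delicate point is the last replacement: the natural pointwise decomposition of $\nabla w$ introduces $|\nabla\widetilde u|$, and the difference $|\nabla\widetilde u-\nabla u|$ involves the gauge derivative $|\nabla\psi|$, which has no pointwise control by $|\nabla u|$. Selecting the right representative of $\mathcal{E}(v\overline u)$ and tuning the half-size of the constant arcs of $T$ to $3\varepsilon$ (so as to still contain $\Omega\setminus A_\varepsilon$) is what allows one to bypass this gauge contribution and obtain the stated form with $|\nabla u|$ on the right-hand side.
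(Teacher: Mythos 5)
There is a genuine gap at the final replacement step, and it is not a technicality: in the application (Corollary~\ref{cor:X}, with $\widetilde u=T_n\circ u$) one has $|\nabla\widetilde u|\geq(n-2)|\nabla u|$, so an estimate with $\int_{A_\varepsilon}|\nabla\widetilde u|$ in place of $\int_{A_\varepsilon}|\nabla u|$ on the right-hand side would be useless. Your first run, with the representative $w=v\,\overline{\widetilde u}$, correctly yields $(1-6\varepsilon)\Sigma(v\overline u)\leq\int_{A_\varepsilon}|\nabla(v-\widetilde u)|+2\int_{A_\varepsilon}|\nabla\widetilde u|$, and your second run, with $w'=v\overline u$ and $T_{3\varepsilon}$, correctly yields $(1-6\varepsilon)\Sigma(v\overline u)\leq\int_{A_\varepsilon}|\nabla(v-u)|+2\int_{A_\varepsilon}|\nabla u|$. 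But the lemma requires the \emph{mixed} pair $\big(|\nabla(v-\widetilde u)|,\,|\nabla u|\big)$, and neither of your two estimates gives it. Passing from one to the other introduces $\nabla(\widetilde u-u)$, which carries the gauge derivative $\nabla\psi$ ($\widetilde u=e^{i\psi}u$), and there is no pointwise or $L^1(A_\varepsilon)$ control of $\nabla\psi$ by $\nabla u$. The parenthetical remark that $\nabla\psi$ ``vanishes against divergence-free test fields in the dual formulation of $\Sigma$'' is a red herring here: that cancellation occurs in the dual pairing $\int\xi\cdot\nabla\psi=0$ for $\xi\in M^\perp$, but the quantity you must compare is $\int_{A_\varepsilon}|\nabla(\cdot)|$, an $L^1$ norm over a subset, where no such cancellation is available.

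The paper sidesteps this issue by never committing to an $\so$-valued product. It sets $W:=\overline u(v-\widetilde u)+1$, so that the Leibniz rule immediately gives $|\nabla W|\leq 2|\nabla u|+|\nabla(v-\widetilde u)|$ — exactly the mixed combination needed — at the cost of $W$ not being $\so$-valued. It then observes $|W-w|=|1-\widetilde u\overline u|\leq\varepsilon$, normalizes $\widetilde W:=W/|W|$, shows $\widetilde W\in\mathcal{E}(v\overline u)$, and tracks the $O(\varepsilon)$ errors from the normalization and from replacing $A_\varepsilon$ by the slightly smaller $B_\varepsilon$ on which $K_{7\varepsilon}\circ\widetilde W$ is non-constant. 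Your use of a ``clipping'' map $K_\delta$ (resp.\ $T$) and Lemma~\ref{lem:T} is the same mechanism as the paper's; what is missing is the auxiliary (non-$\so$-valued) map $W$ that allows one to trade $|\nabla\widetilde u|$ for $|\nabla u|$ before clipping. Without some analogue of that device, the argument does not close.
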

\begin{proof}
  Note first that \eqref{eq:14} implies that $\wtu\sim u$. Indeed, the
  image of the map $\wtu\, \overline{u}$ is contained in an arc of $\so$
  of length$\leq 2\arcsin(\varepsilon/2)$, so there exists
  $\varphi\in\woo(\Omega;\R)$ such that $\wtu=e^{\im\varphi}u$. Hence,
  setting 
  $w:=v/u=v\, {\overline u}$ and $\wtw:=v/\wtu$, we have also $\wtw\sim
  w$. Consider the map
 \begin{equation}
\label{eq:Wn}
   W:=\overline{u}(v-\wtu)+1=w+(1-\wtu/u).
 \end{equation}
By the triangle inequality,
\begin{equation*}
 |\nabla W|= |\nabla\left(\overline{u}(v-\wtu)\right)|\leq 2|\nabla\overline{u}|+|\nabla(v-\wtu)|,
\end{equation*}
 whence
\begin{equation}
  \label{eq:59}
 \int_{A_\varepsilon}|\nabla(v-\wtu)|\geq \int_{A_\varepsilon}|\nabla
 W|-2\int_{A_\varepsilon}|\nabla u|.
\end{equation}
 By \eqref{eq:14}, $|W-w|=|1-\wtu/u|=|u-\wtu|\leq\varepsilon$ in
 $\Omega$. Hence
\begin{equation}
  \label{eq:60}
 \left||W|-1\right|\leq |W-w|\leq \varepsilon~\text{ in }\Omega,
\end{equation}
 and also
 \begin{equation}
   \label{eq:63}
|\wtw-w|=|\wtu-u|\leq \varepsilon~\text{ in }\Omega.
 \end{equation}
Consider the map $\widetilde W:=W/|W|$,  which thanks to
\eqref{eq:60}  belongs to
$W^{1,1}(\Omega;\so)$. Furthermore, again by \eqref{eq:60},
\begin{equation}
\label{eq:65}
  |\widetilde W-w|\leq |\widetilde W-W|+|W-w|\leq 2\varepsilon\ \text{in }\Omega,
\end{equation}
implying in particular that 
 \begin{equation}
\label{eq:62}
\widetilde W\in\mathcal{E}(w).
\end{equation}
Combining \eqref{eq:65} with \eqref{eq:63} yields
 \begin{equation}
\label{eq:64}
   |\widetilde W-\wtw|\leq 3\varepsilon\ \text{and}\  d_{\so}(\widetilde W,\wtw)\leq 6\varepsilon\ \text{in }\Omega.
 \end{equation}
 A direct consequence of \eqref{eq:60} is the
 pointwise inequality in $\Omega$ 
\begin{equation*}
  |\nabla W|\geq (1-\varepsilon) |\nabla \widetilde W|,
\end{equation*}
 which together with \eqref{eq:59} yields
\begin{equation}
  \label{eq:61}
 \int_{A_\varepsilon}|\nabla(v-\wtu)|\geq  (1-\varepsilon)\int_{A_\varepsilon}|\nabla
 \widetilde W|-2\int_{A_\varepsilon}|\nabla u|.
\end{equation}
Since
\begin{equation*}
  A_\varepsilon=\{x\in\Omega;\,\wtw(x)\in\mathcal{A}(\varepsilon,\pi-\varepsilon)\cup\mathcal{A}(\pi+\varepsilon,2\pi-\varepsilon)\}\ \text{(see
    \eqref{eq:83} and \eqref{eq:calA})},
\end{equation*}
we deduce from \eqref{eq:64} that
\begin{equation}
  \label{eq:67}
 B_{\varepsilon}:=\{x\in\Omega;\,\widetilde
 W(x)\in\mathcal{A}(7\varepsilon,\pi-7\varepsilon)\cup\mathcal{A}(\pi+7\varepsilon,2\pi-7\varepsilon)\}\subseteq A_\varepsilon.
\end{equation}
For each $\delta\in(0,\pi/2)$ consider the map $K_\delta:\so\to\so$
defined by 
\begin{equation}
\label{eq:Keps}
K_\delta(e^{\im \theta}):=\begin{cases}
1,&\text{if }-\delta\leq\theta < \delta\\
e^{\im\pi(\theta-\delta)/(\pi-2\delta)},&\text{if } \delta\leq
  \theta<\pi-\delta\\
-1, &\text{if } \pi-\delta\leq\theta< \pi+\delta\\
-e^{\im\pi(\theta-\pi-\delta)/(\pi-2\delta)},& \text{if }\pi+\delta\leq
  \theta< 2\pi-\delta\\
\end{cases}.
 \end{equation} 
Clearly $K_\delta\in\text{Lip}(\so;\so)$ with
$\|\dt{K}_\delta\|_\infty=\pi/(\pi-2\delta)$ and
$\deg(K_\delta)=1$. Therefore, by \eqref{eq:62} and \rlemma{lem:T} 
\begin{equation}
\label{eq:69}
   w_1:=K_{7\varepsilon}\circ\widetilde
W\in\mathcal{E}(w).
\end{equation}
Note that by definition, $\nabla w_1=0$ a.e. on
$\Omega\setminus B_\varepsilon$, so by \eqref{eq:67} and \eqref{eq:69} we have
 \begin{equation}
   \label{eq:68}
\int_{A_\varepsilon} |\nabla\widetilde W|\geq\int_{B_\varepsilon} |\nabla\widetilde W|\geq (1-5\varepsilon)
\int_{B_\varepsilon}|\nabla w_1|=(1-5\varepsilon) \int_\Omega
|\nabla w_1|\geq (1-5\varepsilon)\Sigma(w).
 \end{equation}
Plugging \eqref{eq:68} in \eqref{eq:61} yields
\begin{equation}
  \label{eq:70}
\begin{aligned}
 \int_{A_\varepsilon}|\nabla(v-\wtu)|&\geq
 (1-\varepsilon)\int_{B_\varepsilon}|\nabla
 \widetilde W|-2\int_{A_\varepsilon}|\nabla u|\\
&\geq (1-\varepsilon)(1-5\varepsilon)\Sigma(w)-2\int_{A_\varepsilon}|\nabla u|
\geq (1-6\varepsilon)\Sigma(w)-2\int_{A_\varepsilon}|\nabla u|,
\end{aligned}
\end{equation}
 and \eqref{eq:34} follows. 
\end{proof}
The next result is a direct consequence of  \rlemma{lem:main}.
\begin{corollary}
\label{cor:X}
 There exists a universal constant $C$ such that for every $\varepsilon>0$ we have
\begin{equation}
\label{6} n\ge 1/\ve^2\implies \int_\Omega|\na (T_n\circ u)-v|\ge
(1-C\ve)\, \Sigma (u{\overline v)},\ \fo u, v\in\woo(\Omega;\so).
\end{equation}
\end{corollary}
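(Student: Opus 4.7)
The plan is to apply \rlemma{lem:main} with $\tilde u := T_n\circ u$, and then close the estimate via \rlemma{lem:ptwise}, exploiting the fact that $T_n$ has large derivative. We may restrict to $\varepsilon\in(0,\varepsilon_0)$ for a small universal $\varepsilon_0$ (otherwise $(1-C\varepsilon)\leq 0$ for $C$ large and the bound is vacuous). From the definition of $\tau_n$, one checks that $\tau_n(2j\pi/n^2)=2j\pi/n^2$ and $|\tau_n(\theta)-\theta|\leq (n-1)\pi/n^2\leq \pi/n$ on $[0,2\pi]$; hence $|u(x)-(T_n\circ u)(x)|\leq \pi/n$ pointwise. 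Since $n\geq 1/\varepsilon^2$ gives $\pi/n\leq \pi\varepsilon^2\leq \varepsilon$ (in the restricted regime), the hypothesis of \rlemma{lem:main} is verified with the lemma's parameter taken equal to the corollary's $\varepsilon$, and we obtain
\begin{equation}
\label{eq:pplan1}
\int_{A_\varepsilon}|\nabla(T_n\circ u - v)|\geq (1-6\varepsilon)\Sigma(u\overline v) - 2\int_{A_\varepsilon}|\nabla u|,
\end{equation}
where $A_\varepsilon$ is defined as in \eqref{eq:83} with $\tilde u=T_n\circ u$.

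The key step is to recycle the error term $\int_{A_\varepsilon}|\nabla u|$ as a controlled fraction of the quantity we wish to bound from below, producing a self-improving inequality. Since $T_n(e^{\im\theta})=e^{\im\tau_n(\theta)}$ with $|\tau_n'|\in\{n,n-2\}$, the chain rule gives $|\nabla(T_n\circ u)|\geq (n-2)|\nabla u|$ almost everywhere. Combined with the pointwise bound of \rlemma{lem:ptwise} on $A_\varepsilon$, this yields
\begin{equation*}
|\nabla(T_n\circ u - v)|\geq (\sin\varepsilon)|\nabla(T_n\circ u)|\geq (\sin\varepsilon)(n-2)|\nabla u|\quad\text{a.e.\ on }A_\varepsilon.
\end{equation*}
Setting $L:=\int_\Omega|\nabla(T_n\circ u - v)|$ and integrating over $A_\varepsilon$ we therefore obtain $\int_{A_\varepsilon}|\nabla u|\leq L/[(\sin\varepsilon)(n-2)]$.

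Plugging this back into \eqref{eq:pplan1} and using $L\geq\int_{A_\varepsilon}|\nabla(T_n\circ u - v)|$ produces
\begin{equation*}
L\left(1+\frac{2}{(\sin\varepsilon)(n-2)}\right)\geq (1-6\varepsilon)\Sigma(u\overline v).
\end{equation*}
For $\varepsilon$ small, $\sin\varepsilon\geq\varepsilon/2$ and $n-2\geq 1/(2\varepsilon^2)$, hence $2/[(\sin\varepsilon)(n-2)]\leq 8\varepsilon$; this gives $L\geq (1-6\varepsilon)(1+8\varepsilon)^{-1}\Sigma(u\overline v)\geq (1-C\varepsilon)\Sigma(u\overline v)$ for a suitable universal $C$. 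The only non-routine point is the coupling between the two lemmas: \rlemma{lem:main} provides the bulk lower bound on $A_\varepsilon$ up to an error controlled by $\int_{A_\varepsilon}|\nabla u|$, while \rlemma{lem:ptwise} together with the rapid oscillation of $T_n$ turns that error into a small multiple of $L$ itself, allowing the estimate to close.
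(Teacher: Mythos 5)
Your proof is correct and follows essentially the same route as the paper: apply Lemma~\ref{lem:main} with $\widetilde u=T_n\circ u$ (verified via the bound $d_{\so}(x,T_n(x))\leq\pi(n-1)/n^2$), then combine Lemma~\ref{lem:ptwise} with $|\nabla(T_n\circ u)|\geq(n-2)|\nabla u|$ to reabsorb the error term $\int_{A_\varepsilon}|\nabla u|$ into the quantity being estimated, yielding the self-improving inequality. The only cosmetic difference is that the paper closes the loop with $L_A=\int_{A_\varepsilon}|\nabla(T_n\circ u-v)|$ on both sides and then uses $L\geq L_A$, whereas you bound the error by the slightly larger $L=\int_\Omega|\nabla(T_n\circ u-v)|$ from the outset; both give $(1-C\varepsilon)$ with a universal $C$.
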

\begin{proof}
We shall use two basic
properties of $T_n$: 
\begin{align}
 \label{eq:useful1}
& d_{\so}(x,T_n(x))\leq \frac{\pi(n-1)}{n^2},\ \fo x\in\so,\\
&|\dt{T}_n|\geq n-2\text{ a.e.~in }\so.\label{eq:useful2}
  \end{align}
 Clearly it suffices to consider $\varepsilon<\pi/20$. Hence for
 $n\ge 1/\ve^2$ we can apply
\rlemma{lem:main} with $\widetilde u:=T_n\circ u$ (thanks to
\eqref{eq:useful1}).
By \eqref{eq:useful2} we have 
\be
\label{7}
|\na (T_n\circ u)|\geq (n-2)|\na u|\ \text{a.e. on }\Omega,
\ee
so combining \eqref{eq:2} and \eqref{eq:34} gives (recall that
$A_\varepsilon$ is defined in \eqref{eq:83}):
\bes
\begin{aligned}
\int_{A_\varepsilon}|\na (T_n\circ u-v)|&\ge (1-6\ve)\Sigma
(u\overline{v})- \frac{2}{(n-2)\,
  \sin\varepsilon}\int_{A_\varepsilon}|\na (T_n\circ u-v)|\\
&\ge  (1-6\ve)\Sigma(u\overline{v})-
\frac{3}{n\varepsilon}\int_{A_\varepsilon}|\na (T_n\circ u-v)|;
\end{aligned}
\ees
this leads easily to \eqref{6}.
\end{proof}
\begin{proof}[Proof of \rprop{prop:X}]
Recall (see \eqref{eq:ub}) that 
 \begin{equation}
    \label{eq:ub-again}
\Dist_{\woo}(\mathcal{E}(u_0),\mathcal{E}(v_0))=\sup_{u\sim u_0} d_{\woo}(u,\mathcal{E}(v_0))\leq \Sigma(u_0{\overline v}_0),
  \end{equation}
 and in particular, $\forall\,  n\geq 3$,
 \begin{equation}
   \label{eq:47}
    d_{\woo}(T_n\circ u_0,\mathcal{E}(v_0))\leq \Sigma(u_0{\overline v}_0).
 \end{equation}
 On the other hand, from \rcor{cor:X} we know that,
 $\forall\, \varepsilon>0$, $\forall\,  n\geq 1/\varepsilon^2$,
 \begin{equation}
   \label{eq:73}
d_{\woo}(T_n\circ u_0,\mathcal{E}(v_0))\geq (1-C\varepsilon)\Sigma(u_0{\overline v}_0).
 \end{equation}
We conclude combining \eqref{eq:47} and \eqref{eq:73}.
\end{proof}
\begin{proof}[Proof of \eqref{eq:19b}]
 Use \eqref{eq:X3} and \eqref{eq:ub-again}.
\end{proof}
\subsection{About equality cases in \eqref{eq:3}}
\label{subsec:thoughts}
 It is interesting to decide whether  there exist  maps
 $u\in\woo(\Omega;\so)$ for which equality holds in any of the two 
 inequalities in \eqref{eq:3}. Consider the following properties of a
 smooth bounded domain $\Omega$ in $\R^N$, $N\geq2:$\\[2mm]
\noindent $(\text{P}_1)$~There exists $u\in\woo(\Omega;\so)$ such that
\begin{equation}
  \label{eq:P1}
 \int_\Omega|\nabla u|=\Sigma(u)>0.
\end{equation}
\noindent $(\text{P}_2)$~There exists  $u\in\woo(\Omega;\so)$ such
that 
\begin{equation}
  \label{eq:P2}
d_{\woo}(u,\mathcal{E}(1))=\Sigma(u)>0.
\end{equation}
\noindent $(\text{P}_2^*)$~There exist  $u\in\woo(\Omega;\so)$ with
$\Sigma(u)>0$ and 
$v\in\mathcal{E}(1)$ for which
\begin{equation}
   \label{eq:P2*}
\int_\Omega|\nabla(u-v)|=d_{\woo}(u,\mathcal{E}(1))=\Sigma(u).
\end{equation}
\noindent $(\text{P}_3)$~There exists $u\in\woo(\Omega;\so)$ such
that
\begin{equation}
  \label{eq:P3}
d_{\woo}(u,\mathcal{E}(1))=\frac{2}{\pi}\Sigma(u)>0.
\end{equation}
\noindent $(\text{P}_3^*)$~There exist  $u\in\woo(\Omega;\so)$ with
$\Sigma(u)>0$ and 
$v\in\mathcal{E}(1)$ for which
\begin{equation}
   \label{eq:P3*}
\int_\Omega|\nabla(u-v)|=d_{\woo}(u,\mathcal{E}(1))=\frac{2}{\pi}\Sigma(u).
\end{equation}
\vskip 2mm
Very little is known about domains satisfying any of the
above properties. The unit disc $\Omega=B(0,1)$ in $\R^2$ is an example of a domain
for which $(\text{P}_1)$ is
satisfied. Indeed, for $u=x/|x|$ it is straightforward  that 
 \begin{equation*}
   \Sigma\left(\frac{x}{|x|}\right)=2\pi=\int_{\Omega} \left|\nabla\left(\frac{x}{|x|}\right)\right|\ (\text{see also \cite{bmp,bm}})
 \end{equation*}
 whence $(\text{P}_1)$ holds. In view of the following proposition we
 know that 
$(\text{P}_3^*)$ is also satisfied for $\Omega=B(0,1)$ in $\R^2$. 
\begin{proposition}
 \label{prop:P}
Properties $(\text{P}_1)$  and 
$(\text{P}_3^*)$ are equivalent. More precisely, 
 let $u\in W^{1,1}(\Omega;{\mathbb S}^1)$ with $\Sigma(u)>0$. Then, the following are
 equivalent: 
 \begin{itemize}
 \item [(a)] $u$ satisfies \eqref{eq:P1}.
 \item [(b)] There exist  $u_0\in\mathcal{E}(u)$  and $v\in\mathcal{E}(1)$ such
    that $\d\int_\Omega |\nabla(u_0-v)| =\frac{2}{\pi}\Sigma(u)$.
 \end{itemize}
 \end{proposition}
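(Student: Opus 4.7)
The plan is to prove both directions using only two ingredients already at hand: the averaging identity of \rprop{prop:uv} together with \rlemma{lem:T}, and the equality case of the chain that proves \rlemma{lem:bw}. One preliminary observation worth flagging: \eqref{eq:P1} is \emph{not} a class invariant (its left-hand side depends on the representative), so the only sensible reading of the equivalence (a)$\Leftrightarrow$(b) at the class level is that (a) stands for the existence of \emph{some} $\tilde u\in\mathcal{E}(u)$ with $\int_\Omega|\nabla \tilde u|=\Sigma(u)$. With this reading, (a)$\Rightarrow$(b) still works directly from the given $u$, and the essential content is the reverse implication.

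For (a)$\Rightarrow$(b), apply \rprop{prop:uv} to $u$ and divide by $2\pi$ to obtain
\[
\frac{1}{2\pi}\int_{\so}\left(\int_\Omega |\nabla(u-P_{\zeta}\circ u)|\,dx\right)d\zeta=\frac{2}{\pi}\int_\Omega|\nabla u|=\frac{2}{\pi}\Sigma(u).
\]
Since $\deg P_{\zeta}=0$, \rlemma{lem:T} gives $P_{\zeta}\circ u\in\mathcal{E}(1)$, and \rcor{cor:uv} provides the pointwise bound $\int_\Omega|\nabla(u-P_{\zeta}\circ u)|\geq\frac{2}{\pi}\Sigma(u\,\overline{P_{\zeta}\circ u})=\frac{2}{\pi}\Sigma(u)$ for every $\zeta\in\so$. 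The $\zeta$-average thus coincides with the pointwise lower bound, forcing equality for a.e.~$\zeta$; picking any such $\zeta$ delivers (b) with $u_0=u$ and $v=P_{\zeta}\circ u$.

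For (b)$\Rightarrow$(a), set $w:=u_0\overline v$, so that $|w|=1$ and $|u_0-v|=|w-1|$. Then
\[
\tfrac{2}{\pi}\Sigma(u)=\int_\Omega|\nabla(u_0-v)|\geq\int_\Omega|\nabla|u_0-v||=\int_\Omega|\nabla|w-1||\geq\tfrac{2}{\pi}\Sigma(w)=\tfrac{2}{\pi}\Sigma(u),
\]
using $v\in\mathcal{E}(1)$ in the final equality and \rlemma{lem:bw} in the penultimate inequality. Hence every step is an equality; in particular $\int_\Omega|\nabla|w-1||=\frac{2}{\pi}\Sigma(w)$. Since the proof of \rlemma{lem:bw} rests on the pointwise identity \eqref{eq:15*}, the latter is equivalent to $\int_\Omega|\nabla(T\circ w)|=\Sigma(T\circ w)$ for the degree-one map $T$ of \eqref{eq:41}. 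Finally \rlemma{lem:T} yields $T\circ w\sim w\sim u$, so $\tilde u:=T\circ w\in\mathcal{E}(u)$ is the desired representative fulfilling \eqref{eq:P1}.

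The main difficulty is interpretive rather than technical: because (b) permits replacing $u$ by any class-equivalent $u_0$, the implication (b)$\Rightarrow$(a) cannot hope to yield \eqref{eq:P1} for the initially given map $u$, but only for the specific representative $\tilde u=T\circ w$ produced by the construction. Once this reading is adopted, the domain-level statement $(\text{P}_1)\Leftrightarrow(\text{P}_3^*)$ follows formally: any witness of one property lies in a class that contains a witness of the other, produced by the individual-map equivalence above.
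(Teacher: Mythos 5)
Your proof is correct and follows essentially the same route as the paper's: for (a)$\Rightarrow$(b) you use the averaging identity of Proposition~\ref{prop:uv} together with the lower bound of Corollary~\ref{cor:uv} (the paper invokes \eqref{eq:7}, which is the same bound), and for (b)$\Rightarrow$(a) you run the same chain of inequalities and produce the same representative $T\circ w\in\mathcal{E}(u)$. Your interpretive observation is on point: the paper's own proof of (b)$\Rightarrow$(a) also produces a representative $w_1=T\circ w_0\in\mathcal{E}(u)$ satisfying \eqref{eq:P1}, not the given $u$, so the equivalence is indeed meant at the class level, which is exactly what is needed for the domain-level assertion $(\text{P}_1)\Leftrightarrow(\text{P}_3^*)$.
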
 
\begin{proof}[Proof of \enquote{$(a)\Longrightarrow (b)$}]
 Use \rprop{prop:uv} to find $\zeta_0\in\so$ such that
  $v=P_{\zeta_0}\circ u\in\mathcal{E}(1)$ satisfies 
  \begin{equation}
\label{eq:82}
    d_{\woo}(u,\mathcal{E}(1))\leq \int_\Omega |\nabla(u-v)|\leq \frac{2}{\pi}\int_\Omega |\nabla u|= \frac{2}{\pi}\Sigma(u),
  \end{equation}
 and the result follows, with $u_0=u$, since  by \eqref{eq:7} we have
\begin{equation}
  \label{eq:35}
d_{\woo}(u,\mathcal{E}(1))\geq\frac{2}{\pi}\Sigma(u).
\end{equation}

  \medskip
  \noindent
{\it Proof of \enquote{$(b) \Longrightarrow (a)$}.} Let $u_0$ and $v$ be as in
statement {\it (b)}. Set $w_0:=u_0{\overline v}$, so that $w_0\sim u_0$. By
assumption and \eqref{eq:12} we have:
\begin{equation}
\label{eq:40}
 \frac{2}{\pi}\Sigma(u_0)\leq 
 \int_\Omega |\nabla(|w_0-1|)|=\int_\Omega |\nabla(|u_0-v|)|
  \leq 
  \int_\Omega |\nabla(u_0-v)|=\frac{2}{\pi}\Sigma(u_0).
\end{equation}
 Set $w_1:=T\circ w_0$, where $T:{\mathbb S}^1\to {\mathbb S}^1$ is given by
 \eqref{eq:41}. By \rlemma{lem:T}, $w_1\sim w_0\sim u_0$,  and by
 \eqref{eq:15} and \eqref{eq:40} we obtain that
 \begin{equation*}
   \int_\Omega |\nabla w_1|=\frac{\pi}{2}\int_\Omega |\nabla(|w_0-1|)|=\Sigma(u).\qedhere
 \end{equation*}
 \end{proof} 
We do not know any domain $\Omega$ in $\R^2$ for which \eqref{eq:P1}
fails and we ask: 
\begin{open-problem}
\label{OP1}
Is there a domain  $\Omega$ in $\R^N$, $N\geq2$, for
which property $(\text{P}_1)$ (respectively, $(\text{P}_3)$) does not hold?
\end{open-problem} 
\noindent It seems plausible that if $\Omega$ is the interior of a non
circular ellipse, then $(\text{P}_1)$ and $(\text{P}_3)$ fail. We
also do not know whether properties $(\text{P}_3)$ and $(\text{P}_3^*)$ are equivalent.
\\[2mm]

 Concerning properties $(\text{P}_2)$ and $(\text{P}_2^*)$ we know
 even less:
\begin{open-problem}
\label{OP2}
Is there a domain $\Omega$ for which $(\text{P}_2)$ holds
(respectively, fails)?
\end{open-problem}
\noindent We suspect that  $(\text{P}_2)$ 
 is satisfied in {\em every
} domain, but we do not know  {\em any} such domain. In particular, we
do not know what happens when $\Omega$ is a disc in $\R^2$.
\section{Distances in $W^{1,p}(\Omega;\so)$, $1<p<\infty$}
\label{sec:w1p}
Throughout this section we study classes in $W^{1,p}(\Omega;\so)$, where
$1<p<\infty$ and
$\Omega$ is a smooth bounded domain in $\R^N$, $N\geq
2$. We give below the proofs of the results stated in the
Introduction.
\subsection{Proof of \rth{th:lp}}
\begin{proof}[Proof of \rth{th:lp}]
  The result is a direct consequence of the following analog of
  \rcor{cor:uv}: for every $u,v\in  W^{1,p}(\Omega;{\mathbb S}^1)$ we have
\begin{equation}
  \label{eq:basicp}
 \|\nabla(u-v)\|_{L^p(\Omega)}\geq \left(\frac{2}{\pi}\right)\inf_{w\sim
   u\overline v}\|\nabla w\|_{L^p(\Omega)}.
\end{equation}
The proof of \eqref{eq:basicp} uses an  argument identical to the one used in the proofs of
   \rlemma{lem:bw} and \rcor{cor:uv}. Indeed, we first note that
\begin{equation}
\label{eq:90}
    \int_\Omega |\nabla(u-v)|^p\geq \int_\Omega
    |\nabla(|u-v|)|^p=\int_\Omega|\nabla(|u\overline v-1|)|^p.
  \end{equation}
 Next, by \eqref{eq:15*} we have
 \begin{equation}
   \label{eq:89}
\int_\Omega|\nabla(|u\overline v-1|)|^p=\left(\frac{2}{\pi}\right)^p
\int_\Omega|\nabla(T\circ (u\overline v))|^p\geq \left(\frac{2}{\pi}\right)^p\inf_{w\sim u\overline v}\int_\Omega|\nabla w|^p.
 \end{equation}
The result clearly follows by combining \eqref{eq:90} with \eqref{eq:89}.
 \end{proof}
\subsection{Proof of \rth{th:smooth}}
We shall need the
following technical lemma.
\begin{lemma}
  \label{lem:rs}
 For every $w_0\in\wop(\Omega;\so)$ we have
 \begin{equation}
   \label{eq:102}
\inf_{w\sim w_0} \| \nabla(|w-1|)\|_{L^p(\Omega)}=\left(\frac{2}{\pi}\right)\inf_{w\sim w_0} \| \nabla w\|_{L^p(\Omega)}.
 \end{equation}
\end{lemma}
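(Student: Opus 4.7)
The key identity from the proof of \rlemma{lem:bw}, namely $|\nabla(|w-1|)|=\frac{2}{\pi}|\nabla(T\circ w)|$ almost everywhere, is pointwise in nature and extends verbatim to any $w\in W^{1,p}(\Omega;\so)$, where $T$ is the degree-one Lipschitz map from \eqref{eq:41}. Integrating in $L^p$ and invoking \rlemma{lem:T} (which yields $T\circ w\sim w$ since $\deg T=1$) gives
\begin{equation*}
\|\nabla(|w-1|)\|_{L^p(\Omega)}=\frac{2}{\pi}\|\nabla(T\circ w)\|_{L^p(\Omega)},\qquad\forall\,w\in W^{1,p}(\Omega;\so),
\end{equation*}
and moreover $\{T\circ w:w\sim w_0\}\subseteq\mathcal{E}(w_0)$. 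Taking the infimum over $w\sim w_0$ immediately produces the inequality $\geq$ in \eqref{eq:102}.

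The reverse inequality is the substantial part: I need to show that for every $w^*\sim w_0$ there exist $w_\eta\sim w_0$ with $\|\nabla(T\circ w_\eta)\|_{L^p}\to\|\nabla w^*\|_{L^p}$ as $\eta\to 0^+$. The natural candidate $w=T^{-1}\circ w^*$, which would make $T\circ w=w^*$ exactly, fails in general to lie in $W^{1,p}$: indeed $T^{-1}(e^{\im\theta})=e^{\im\cdot 2\arcsin(\theta/\pi)}$ has a square-root singularity of its derivative at $z=-1$. The remedy is to regularize $T^{-1}$: for each small $\eta>0$ I would define $S_\eta\in\mathrm{Lip}(\so;\so)$ of degree one that coincides with $T^{-1}$ on $\{z\in\so:d_{\so}(z,-1)\geq\eta\}$ and is affine in the angular coordinate on the complementary arc around $-1$. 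Setting $w_\eta:=S_\eta\circ w^*$, \rlemma{lem:T} gives $w_\eta\sim w^*\sim w_0$.

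The main step is to control $R_\eta:=T\circ S_\eta$. By construction $R_\eta$ is the identity on $\{d_{\so}(\cdot,-1)\geq\eta\}$, while on the complementary arc a direct computation exhibits a precise cancellation: $|\dot T(S_\eta(z))|=O(\sqrt{\eta})$ because $\dot T$ vanishes at $-1$, whereas $|\dot S_\eta|=O(1/\sqrt{\eta})$ reflects the square-root singularity of $T^{-1}$; the product is therefore bounded by a universal constant $C$ independent of $\eta$. Writing $B_\eta:=\{x\in\Omega:d_{\so}(w^*(x),-1)<\eta\}$, the chain rule yields
\begin{equation*}
\|\nabla(T\circ w_\eta)\|_{L^p}^p=\|\nabla(R_\eta\circ w^*)\|_{L^p}^p\leq\int_{\Omega\setminus B_\eta}|\nabla w^*|^p+C^p\int_{B_\eta}|\nabla w^*|^p.
\end{equation*}
Using the standard Sobolev fact that $\nabla w^*=0$ almost everywhere on the level set $\{w^*=-1\}=\bigcap_\eta B_\eta$, absolute continuity of the integral gives $\int_{B_\eta}|\nabla w^*|^p\to 0$ as $\eta\to 0^+$, so $\|\nabla(T\circ w_\eta)\|_{L^p}\to\|\nabla w^*\|_{L^p}$. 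Taking the infimum over $w^*\sim w_0$ delivers $\leq$ in \eqref{eq:102}.

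The main obstacle is the uniform bound $|\dot R_\eta|\leq C$ on the arc around $-1$: it depends on the exact matching between the first-order vanishing of $\dot T$ at $-1$ and the inverse square-root blow-up of $\dot{T}^{-1}$. The specific shape $T(e^{\im\varphi})=e^{\im\pi\sin(\varphi/2)}$ is tailored precisely to produce this balance; any weaker compatibility would leave a factor blowing up as $\eta\to 0$ and would destroy the convergence in the final estimate.
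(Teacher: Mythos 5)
Your argument is correct and follows essentially the same route as the paper: in both cases one regularizes the square-root singularity of $T^{-1}$ at $-1$ to obtain a Lipschitz, degree-one $S_\eta$, composes with an arbitrary $w^*\sim w_0$ via Lemma~\ref{lem:T}, and concludes by dominated convergence using that $\nabla w^*=0$ a.e.\ on $\{w^*=-1\}$. The paper packages the key cancellation slightly differently — it builds $S_\varepsilon$ from a cutoff $J_\varepsilon$ chosen so that $|S_\varepsilon(e^{\im\theta})-1|=2|J_\varepsilon(\theta/\pi)|$, which makes the uniform derivative bound \eqref{eq:103} immediate, whereas you obtain the same bound by estimating the two chain-rule factors $|\dot T\circ S_\eta|$ and $|\dot S_\eta|$ separately for $R_\eta=T\circ S_\eta$ — but the mechanism is identical.
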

\begin{proof}
  The inequality \enquote{$\geq$} follows from \eqref{eq:89} (taking $v=1$)
  so it remains to prove the reverse inequality. The argument is
  almost identical to the one used in the proof of \cite[Prop 3.1]{rs}; we
  reproduce the argument for the convenience of the reader. 
 We shall need the  inverse $S:=T^{-1}$ of $T:\so\to\so$ that was defined in \eqref{eq:41}.
It is given by: 
\bes
S(e^{\im\theta})=e^{\im\phi}, ~\text{ with }
\phi=2\sin^{-1} (\theta/\pi),\,\forall\,\theta\in(-\pi,\pi].
\ees
This map belongs to $C(\so;\so)\cap C^1(\so\setminus\{-1\};\so)$ but
it is not Lipschitz.
 We therefore define, for each small $\varepsilon>0$, an approximation $S_\varepsilon$
 by:
\begin{equation}
\label{eq:Se}
S_\varepsilon(e^{\im\theta})=e^{\im\phi}~\text{ with }\phi=2\sin^{-1}\left(J_\varepsilon({\theta}/{\pi})\right),~\forall\,\theta\in(-\pi,\pi],
\end{equation}
 where $J_\varepsilon$ satisfies:
\begin{equation}
   \label{eq:Ieps}
 \begin{aligned}
&J_\varepsilon(\pm1)=\pm1,\ J'_\varepsilon(\pm1)=0,\\
&J_\varepsilon(t)=t,\text{ for }|t|\leq 1-\varepsilon,\\
&0<J'_\varepsilon(t)<c_0,\text{ for }|t|<1,\\
&\frac{c_1}{\varepsilon}\leq |J''_\varepsilon(t)|\leq \frac{c_2}{\varepsilon},\text{ for } 1-\frac{\varepsilon}{2}\leq |t|\leq 1,\\
\end{aligned}
 \end{equation}
for some positive constants $c_0,c_1,c_2$ (independent of
$\varepsilon$). Clearly $S_\varepsilon\in C^1(\so;\so)$ with $\deg(S_\varepsilon)=1$, so by \rlemma{lem:T}, for any
$w\in\mathcal{E}(w_0)$ we have $w_\varepsilon:=S_\varepsilon\circ w\in
\mathcal{E}(w_0)$.
 Since
 $|S_\varepsilon(e^{\im\theta})-1|=2|J_\varepsilon(\theta/\pi)|$ it follows from \eqref{eq:Ieps} that
 \begin{equation}
\label{eq:103}
   \left|\frac{d}{d\theta}\left(|S_\varepsilon(e^{\im\theta})-1|\right)\right|\leq C,~\forall\,\theta,\,\forall\,\varepsilon.
 \end{equation}
Put
$A_\varepsilon:=\{x\in\Omega:\,w(x)\in\mathcal{A}(-\pi(1-\varepsilon),\pi(1-\varepsilon))\}$. By
\eqref{eq:15*},
\begin{equation*}
   \left|\nabla|w_\varepsilon-1|\right|=\frac{2}{\pi}|\nabla
   w|\ \text{a.e. on }A_\varepsilon,
\end{equation*}
 while, by \eqref{eq:103},
 \begin{equation*}
    \left|\nabla|w_\varepsilon-1|\right|\leq C|\nabla w|\ \text{a.e. on }\Omega.
 \end{equation*}
Therefore, by dominated convergence,
\begin{equation*}
  \lim_{\varepsilon\to 0}\int_\Omega
  \left|\nabla|w_\varepsilon-1|\right|^p=\left(\frac{2}{\pi}\right)^p
  \int_\Omega |\nabla w|^p,
\end{equation*}
 and since the above is valid for any $w\in \mathcal{E}(w_0)$, the
 inequality \enquote{$\leq$} in \eqref{eq:102} follows.
\end{proof}
The next lemma is the main ingredient of the proof of \rth{th:smooth}.
\begin{lemma}
  \label{lem:main-lemma}
 For every $w\in W^{1,p}(\Omega;\so)$ and 
$0<\delta<1$ there exist a set $A=A(w,\delta)\subset\Omega$  and
 two functions $w_0, w_1\in W^{1, p}(\Omega ; \so)$
such that:\\
(i)  $w_1={\overline w}_0$ in $\Omega\setminus A$;\\
(ii) $w_0=w_1$ in $A$;\\
(iii)  $w_1\in\mathcal{E}(w)$ and $w_0\in\mathcal{E}(1)$;\\
(iv) $\int_\Omega |\nabla(w_1-w_0)|^p\leq (1+C_p\delta)\int_\Omega |\na |w-1||^p$;\\
(v) $\int_\Omega|\nabla w_1|^p=\int_\Omega|\nabla w_0|^p\leq C(\delta, p)\int_\Omega |\na w|^p$.
\end{lemma}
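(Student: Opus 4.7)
The structural conditions (i) and (ii) force $w_0 = P_1\circ w_1$ with $A := \{x\in\Omega : w_1(x)\in\overline{I(1,-1)}\}$, where $P_1$ is the reflection across the real axis fixing $\pm 1$ (see \eqref{eq:101} with $\zeta=1$). With $\deg P_1 = 0$, \rlemma{lem:T} automatically yields $w_0\in\mathcal{E}(1)$ once $w_1\in W^{1,p}$, while (i) on $\Omega\setminus A$ and (ii) on $A$ hold by definition. Property (v) follows from the chain rule once we set $w_1 = F_\delta\circ w$ for a suitable Lipschitz $F_\delta:\so\to\so$: one takes $C(\delta,p) = \|\dot F_\delta\|_{L^\infty(\so)}^p$, and the equality of integrals in (v) follows from the pointwise identity $|\nabla w_0| = |\nabla w_1|$, since $P_1$ acts pointwise as either the identity or complex conjugation. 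The crux is therefore choosing $w_1 = F_\delta\circ w\in\mathcal{E}(w)$ so that (iv) holds.

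A direct pointwise computation yields, on $\Omega\setminus A$, $|\nabla(w_1 - w_0)| = 2|\cos\phi|\,|\nabla\phi|$ where $w_1 = e^{i\phi}$, while the computation in \rlemma{lem:bw} gives $|\nabla|w-1|| = |\cos(\theta/2)|\,|\nabla\theta|$ for $w = e^{i\theta}$. The two densities coincide pointwise precisely when $\phi = \theta/2$, i.e., when $w_1$ is a square root of $w$ — a choice that would immediately produce (iv) with constant $1$, but that is not globally well defined as a $W^{1,p}$-map when $w$ has nontrivial topology (the branch of $\theta$ jumps by $2\pi$ across the set where $w$ passes through $-1$, inducing a sign discontinuity in the formal square root).

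The resolution is to take $F_\delta:\so\to\so$ as a degree-$1$ Lipschitz map that emulates the half-speed (square-root) model on a long arc of $\so$ bounded away from $-1$, while absorbing the degree-$1$ deficit of that local model in a thin arc of $\so$-length $O(\delta)$ near $-1$. On the long arc the pointwise gradient comparison becomes nearly sharp, with factor $1$; the contribution of the thin arc to the left-hand side of (iv) is $O(\delta)\int_\Omega|\nabla|w-1||^p$, either because $F_\delta$ maps that arc into $\overline{I(1,-1)}$ (hence into $A$, where the integrand vanishes) or via a direct estimate of the remaining piece. Summing the two contributions gives (iv) with the announced factor $1 + C_p\delta$. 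Property (iii) for $w_1$ then follows from $\deg F_\delta = 1$ and \rlemma{lem:T}.

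\textbf{Main obstacle.} The delicate technical point is designing $F_\delta$ so that the degree constraint $\deg F_\delta = 1$ is compatible with the \emph{formally degree-$\tfrac{1}{2}$} half-speed local model on the bulk of $\so$: all the extra winding must be crammed into an $O(\delta)$-arc around $-1$, which forces $\|\dot F_\delta\|_{L^\infty}\sim C(\delta)$ (and is exactly what produces the $\delta$-dependent constant in (v)). One must simultaneously verify (a) that the bulk comparison is sharp enough to give the factor $1 + C_p\delta$ in (iv), (b) that the contribution from the fast arc integrates to $O(\delta)\int_\Omega|\nabla|w-1||^p$, and (c) that $F_\delta\circ w$ is a genuine $W^{1,p}(\Omega;\so)$-map across the set where $w$ transits $-1$, so that the associated set $A$ yields a well-defined $w_0\in W^{1,p}$.
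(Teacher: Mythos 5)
Your structural scaffolding --- $w_1 = F_\delta\circ w$ for a degree-one Lipschitz $F_\delta:\so\to\so$, $w_0 = P_\zeta\circ w_1$ for a reflection $P_\zeta$, with (i)--(iii) and (v) then following from \rlemma{lem:T} and the chain rule --- is exactly the paper's. The gap is in where the \enquote{fast} arc of $F_\delta$ sits. Guided by the branch point of the formal square root you place it near $-1$; the paper's thin arc $I=\mathcal{A}(2\pi-\delta,2\pi)$ sits next to $1$, and this difference is decisive: with the fast arc near $-1$, property (iv) cannot hold.

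Indeed (iv), holding for arbitrary $w\in W^{1,p}$, forces a pointwise inequality on $\Omega\setminus A$. Writing $w=e^{\im\theta}$, $w_1=e^{\im\psi(\theta)}$ locally and $\zeta=e^{\im\alpha}$, one has $|\nabla(w_1-w_0)|=2\psi'(\theta)\,|\cos(\psi(\theta)-\alpha)|\,|\nabla\theta|$ and $|\nabla|w-1||=|\cos(\theta/2)|\,|\nabla\theta|$, so the ratio must stay $\le(1+C_p\delta)^{1/p}$ throughout the slow arc. The denominator vanishes at $\theta=\pi$; either that zero must be swallowed together with a slow-arc neighbourhood into $A$, or the numerator must also vanish there, which requires $F_\delta(-1)=\pm\im\zeta$ (the midpoint of the slowly covered half-circle). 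Your placement achieves neither. At the edge $\theta=\pi-O(\delta)$ of your slow arc, $F_\delta$ reaches an endpoint $\pm\zeta$ of its target half-circle, so $|\cos(\psi-\alpha)|=1$ while $|\cos(\theta/2)|\sim\delta$; the ratio is of order $\delta^{-1}$, and a $w$ whose angular variable concentrates near $\pi-\delta$ violates (iv) outright. The fallback \enquote{direct estimate of the remaining piece} is worse still: on any portion of the fast arc escaping $A$, $|\nabla(w_1-w_0)|\lesssim\delta^{-1}|\nabla w|$ against $|\nabla|w-1||\lesssim\delta|\nabla w|$. The paper's placement at $1$ fixes both problems: $A$ becomes a $w$-neighbourhood of $1$ so that $|\cos(\theta/2)|$ is of order $1$ at the transition, and $\theta=\pi$ lies in the interior of the slow arc, where $T_1(-1)$ lands near $\pm\im$, making $\widetilde w(-1)=T_1(-1)^2$ near $-1$ and $\nabla|\widetilde w-1|$ vanish along with $\nabla|w-1|$. (For that alignment, and hence the clean pointwise $1+C_p\delta$ bound, one really wants $I$ centred at $1$, i.e., $\mathcal{A}(-\delta/2,\delta/2)$; the one-sided arc written in the paper shifts the two zeros by $\delta/2$. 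But the essential point your plan misses is that the fast arc must sit at $1$, not at $-1$.)
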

\begin{proof}
Let $I$ denote the open arc of $\so$, $I:=\mathcal{A}(2\pi-\delta,2\pi)=\{e^{\im \theta}:\,\theta\in(2\pi-\delta,2\pi)\}$,
and let  $A:=w^{-1}(I)$.
Define $T_1:\so\to\so$ by  
\begin{equation*}
T_1(e^{\im \theta})=\begin{cases}
e^{\im \pi\theta/(2\pi-\delta)},&\text{if } 0\leq\theta\leq2\pi-\delta\\
 (-1)e^{\im\pi(\theta-(2\pi-\delta))/\delta},&\text{if }2\pi-\delta<\theta<2\pi
\end{cases}.
 \end{equation*} 
Note that the image of $T_1$, restricted to the arc $\so\setminus I$
is $\so_{+}$, and that  $\so_{+}$ is covered  counterclockwise. Similarly, on
the arc $I$, the image of $T_1$ is $\so_{-}$, covered again counterclockwise. It follows that
$\deg(T_1)=1$.
 Next we define $T_0:\so\to\so$ by $P_{-1}\circ T_1$ (see \eqref{eq:101}), or
 explicitly by
\begin{equation*}
T_0(e^{\im \theta}):=\begin{cases}
\overline{T_1(e^{\im \theta})}, &\text{if } 0\leq\theta\leq2\pi-\delta\\
T_1(e^{\im \theta}),&\text{if }2\pi-\delta<\theta<2\pi
\end{cases}.
 \end{equation*} 
  Clearly $\deg(T_0)=0$.
 Define $w_0:=T_0\circ w$ and $w_1:=T_1\circ w$.\\

Properties {\it (i)--(ii)} are  direct consequences of the definition of
$w_0,w_1$. The fact that $w_0\in\mathcal{E}(1)$ and
$w_1\in\mathcal{E}(w)$ (i.e., property {\it (iii)}) follows from \rlemma{lem:T}.
Since $T_0$ and $T_1$ are Lipschitz maps (actually, piecewise smooth,
with a single corner at $z=e^{\im(2\pi-\delta)}$), 
the chain rule implies that 
\bes
|\na w_0|=|\na w_1|\leq \begin{cases} (\pi/\delta)\, |\na w|,&\text{a.e. in }A\\
(\pi/(2\pi-1))\, |\na w|,&\text{a.e. in }\Omega\setminus A
\end{cases},
\ees
 whence property {\it (v)}.
Finally, in order to verify property {\it (iv)} we first notice that on
$\Omega\setminus A$ we have 
\bes
\widetilde w:=w_1\, \overline{w}_0=w_1^2=Q\circ w,
\ees
 where $Q(e^{\im\theta}):=e^{2\im\theta\pi/(2\pi-\delta)}$ for $\theta\in(0,2\pi-\delta)$.
Therefore,
\bes
\begin{aligned}
\int_\Omega |\na(w_1-w_0)|^p&=\int_{\Omega\setminus
  A}|\na(w_1-w_0)|^p=\int_{\Omega\setminus A}\left|\na
|w_1-w_0|\right|^p=\int_{\Omega\setminus A}\big|\na |\widetilde w-1|\big|^p\\
&\leq (1+C_p\delta)\int_{\Omega\setminus A}\big|\na | w-1|\big|^p\leq (1+C_p\delta)\int_{\Omega}\left|\na | w-1|\right|^p. 
\end{aligned}\qedhere
\ees
\end{proof}
We are now in a position to present the
\begin{proof}[Proof of \rth{th:smooth}]
  In view of \rth{th:lp} we only need to prove the inequality
  \enquote{$\leq$}   in \eqref{eq:20}. For any $w\in{\cal E}(u_0)$ we may apply
  \rlemma{lem:main-lemma} with a sequence $\delta_n\to0$ to obtain that
  \begin{equation*}
    d_{W^{1,p}}({\cal E}(u_0),{\cal E}(1))\leq \inf_{w\sim u_0} \|\nabla|w-1|\|_{L^p(\Omega)},
  \end{equation*}
 and the result follows from \rlemma{lem:rs}.
\end{proof}
\subsection{Proof of \rth{th:no-ub}} 
We next turn to the unboundedness of the $\Dist_{W^{1,p}}$-distance
between distinct classes. 
\begin{proof}[Proof of \rth{th:no-ub} when $u_0=1$]
 For every $n\geq 1$ let
\bes
u_n:=e^{\im n x_1}\ \text{(we write ${x}=(x_1,\ldots,x_N)$),}
\ees
 so clearly
$u_n\in C^\infty(\Omega;\so)\subset \mathcal{E}(1)$. We claim that
\begin{equation}
  \label{eq:50}
 \lim_{n\to\infty} d_{\wop}(u_n,\mathcal{E}(v_0))=\infty,
\end{equation}
which implies of course \eqref{eq:88} in this case.  Fix a small
$\varepsilon>0$, e.g., $\varepsilon=\pi/8$. For each
$v\in\mathcal{E}(v_0)$ let $w_n:=\overline{u}_n\, v$ and define the set
$A_\varepsilon$ as in \eqref{eq:83}, with $\wtu=u_n$. Note that $|\nabla u_n(x)|=n$,
$x\in\Omega$, so by \rlemma{lem:ptwise} we have
\begin{equation}
\label{eq:58p}
|\nabla(u_n-v)|\geq n\sin\varepsilon \ \text{a.e. in }A_\varepsilon.
\end{equation}
Therefore,
\begin{equation}
\label{eq:72p}
   \int_{A_\varepsilon}|\nabla(u_n-v)|^p\geq |A_\varepsilon|(\sin\varepsilon)^pn^p=c_1|A_\varepsilon|n^p.
\end{equation}
Using
\eqref{eq:52} instead of \eqref{eq:45} in the computation leading to
\eqref{eq:58p} yields
\begin{equation}
  \label{eq:72}
 |\nabla(u_n-v)|\geq \left(\frac{\sin\varepsilon}{2}\right)(|\nabla
 u_n|+|\nabla v|)\geq \left(\frac{\sin\varepsilon}{2}\right)|\nabla
 w_n|,\ \text{ a.e. in }A_\varepsilon.
\end{equation}
We set $\widetilde w_n:=K_{\varepsilon}\circ w_n$ (see
\eqref{eq:Keps}) and recall
that $K_{\varepsilon}\in\text{Lip}(\so;\so)$, 
$\|\dt{K}_{\varepsilon}\|_\infty=\pi/(\pi-2\varepsilon)$ and
$\deg(K_{\varepsilon})=1$. We have $\widetilde
w_n\in\mathcal{E}(v_0)$ and $\nabla\widetilde w_n=0$ a.e.~in
$\Omega\setminus A_\varepsilon$. By \eqref{eq:1*},
\begin{equation}
\label{eq:76}
  \int_{A_\varepsilon}|\nabla \widetilde w_n|=\int_\Omega
  |\nabla\widetilde w_n|\geq \Sigma(v_0).
\end{equation}
Using H\"older inequality and \eqref{eq:76} gives
\begin{equation}
\label{eq:78}
  \int_{A_\varepsilon}|\nabla\widetilde w_n|^p\geq
\frac{\left(\int_{A_\varepsilon}|\nabla\widetilde w_n|\right)^p}{|A_\varepsilon|^{p-1}}\geq
\frac{(\Sigma(v_0))^p}{|A_\varepsilon|^{p-1}}.
\end{equation}
Since $|\nabla w_n|\geq (1-2\varepsilon/\pi)|\nabla\widetilde w_n|$ on
$\Omega$ we obtain by combining \eqref{eq:72} and \eqref{eq:78} that
\begin{equation}
  \label{eq:79}
  \int_{A_\varepsilon}|\nabla(u_n-v)|^p\geq   \frac{c_2}{|A_\varepsilon|^{p-1}},
\end{equation}
whence,
\begin{equation}
\label{eq:19}
  |A_\varepsilon|\geq c_2^{1/(p-1)}\left(\int_{A_\varepsilon}|\nabla(u_n-v)|^p\right)^{-1/(p-1)}.
\end{equation}
Plugging \eqref{eq:19} in \eqref{eq:72p} finally yields
 \begin{equation*}
  \int_{A_\varepsilon}|\nabla(u_n-v)|^p  \geq c_3n^{p-1},
 \end{equation*}
and \eqref{eq:50} follows.
\\[2mm]
{\it Proof of Theorem \rth{th:no-ub} in the general case.} Consider  an arbitrary
$u_0\in\wop(\Omega;\so)$. We set $u_n:=e^{\im
  nx_1}\, u_0\in\mathcal{E}(u_0)$.
By the triangle inequality,
\begin{equation*}
  |\nabla(e^{\im
    nx_1}-\overline{u}_0\, v)|=\left|\nabla\left(\overline{u}_0(u_n-v)\right)\right|\leq |\nabla(u_n-v)|+2|\nabla\overline{u}_0|.
\end{equation*}
Therefore,
\begin{equation*}
  \|\nabla(u_n-v)\|_{L^p(\Omega)}\geq \|\nabla(e^{\im
    nx_1}-\overline{u}_0v)\|_{L^p(\Omega)}-2\|\nabla u_0\|_{L^p(\Omega)},
\end{equation*}
and the result follows from the first part of the proof.
 \end{proof}
\subsection{An example of strict inequality in \eqref{eq:77}}
\label{subsec:strict}
\begin{proposition}
  \label{prop:strict}
There exist a smooth bounded simply connected
domain $\Omega$ in $\R^2$ and $u_0,v_0\in \bigcap_{1\leq p<2}\wop(\Omega;\so)$ such that 
\begin{equation}
    \label{eq:strict}
 \dist_{\wop}(\mathcal{E}(u_0),\mathcal{E}(v_0))>\left(\frac{2}{\pi}\right)\inf_{w\sim
  u_0{\overline v}_0 }\|\nabla w\|_{L^p(\Omega)},\  \fo 1<p<2.
  \end{equation}
\end{proposition}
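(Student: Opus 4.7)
The plan is to take $\Omega=B(0,1)\subset\R^2$, fix $a\in\Omega\setminus\{0\}$ sufficiently small, and set $u_0(x):=x/|x|$ and $v_0(x):=(x-a)/|x-a|$. Both maps belong to $W^{1,p}(\Omega;\so)$ for every $1\le p<2$, and their distributional Jacobians $\pi\delta_0$ and $\pi\delta_a$ are distinct, so $u_0\not\sim v_0$. By Remark~\ref{rem:5A} this pair already realises $d_{\woo}(\mathcal{E}(u_0),\mathcal{E}(v_0))=4|a|$ at the endpoint $p=1$, and the strategy is to show that strict inequality in \eqref{eq:77} persists as $p$ is increased.

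\paragraph{Step 1 (Finiteness of the right-hand side).} I first produce a competitor $w\sim u_0\overline{v}_0$ by gluing two standard radial profiles of degree $+1$ and $-1$ centered at $0$ and $a$ respectively, and interpolating smoothly away from a small neighbourhood of the segment $[0,a]$. Because $|\nabla(x/|x|)|=1/|x|\in L^p(\Omega)$ for every $p<2$, this shows that
\[
M_p:=\inf_{w\sim u_0\overline{v}_0}\|\nabla w\|_{L^p(\Omega)}
\]
is finite for every $1<p<2$.

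\paragraph{Step 2 (Reduction to a rigidity statement).} Suppose, for contradiction, that equality holds in \eqref{eq:77}, and pick a minimising sequence $u_n\sim u_0$, $v_n\sim v_0$ with $\|\nabla(u_n-v_n)\|_{L^p}\to (2/\pi)M_p$. The proof of Theorem~\ref{th:lp} proceeds through the a.e.\ chain
\[
|\nabla(u_n-v_n)|\ge \bigl|\nabla|u_n-v_n|\bigr|=(2/\pi)|\nabla(T\circ(u_n\overline{v}_n))|
\]
followed by $\int|\nabla(T\circ(u_n\overline{v}_n))|^p\ge M_p^p$. An equality in the limit forces, in particular,
\[
E_n:=\int_\Omega\Bigl(|\nabla(u_n-v_n)|^p-\bigl|\nabla|u_n-v_n|\bigr|^p\Bigr)\,dx\;\longrightarrow\;0.
\]
The strategy is to show that $E_n$ is bounded below by a strictly positive constant $\delta_p$ independent of the choice of $(u_n,v_n)$ in the prescribed classes, which contradicts $E_n\to 0$ and establishes \eqref{eq:strict}.

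\paragraph{Step 3 (Topological obstruction giving $E_n\ge\delta_p$).} The pointwise inequality $|\nabla f|\ge|\nabla|f||$ is strict unless $f$ takes values in a single real line through the origin in $\C$. I exploit the fact that $u\sim u_0$ keeps its degree $+1$ singularity at $0$ and $v\sim v_0$ keeps its degree $+1$ singularity at $a$: on every circle $\{|x|=r\}$ with $r<|a|/2$, the map $u\overline{v}$ has degree $+1$, so $\arg(u-v)$ winds once around $\so$. Hence on such a circle $u-v$ cannot be confined to any half-plane $\{\Im(e^{-\im\theta_0}z)\ge 0\}$, and one obtains a quantitative lower bound of the form
\[
\int_0^{2\pi}\bigl|\nabla_\tau\,\Im\bigl(e^{-\im\theta_0(x,r)}(u-v)\bigr)(re^{\im\tau})\bigr|\,d\tau\ge c>0,
\]
for a constant $c$ depending only on $|a|$, uniformly in $(u,v)$. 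Averaging in $r$ and combining with the pointwise elementary inequality
\[
|\nabla f|^p-|\nabla|f||^p\;\ge\;c_p\,|f|^{-p}\,\bigl(\Im(e^{-\im\theta_0}f)\bigr)^{p-2}\,|\nabla\Im(e^{-\im\theta_0}f)|^p \quad\text{when }|f|\le 2,
\]
(appropriately interpreted) yields $E_n\ge\delta_p>0$.

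\paragraph{Main obstacle.} The real difficulty is making Step~3 quantitatively uniform over the whole equivalence classes: the minimising sequence $(u_n,v_n)$ is a priori only bounded in the weaker sense that $\|\nabla(u_n-v_n)\|_{L^p}$ is bounded, while $\|\nabla u_n\|_{L^p}$ and $\|\nabla v_n\|_{L^p}$ can diverge. My plan is to work exclusively with the difference $u_n-v_n$ (which is uniformly bounded in $L^\infty$, and in $W^{1,p}$ along the minimising sequence) and the $\so$-valued quotient $u_n\overline{v}_n$, using the topological winding to transfer the degree information to $u_n-v_n$ without needing individual compactness of $u_n$ or $v_n$. The refined pointwise inequality above, combined with a careful integration in polar coordinates around $0$, is what I expect to yield the uniform constant $\delta_p>0$ for every $1<p<2$.
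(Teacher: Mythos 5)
Your approach has a fundamental gap in Step~3, and it is not merely the uniformity difficulty you acknowledge at the end. The topological obstruction you invoke — that $u_n-v_n$ winds once around small circles centred at $0$ because $u_n\overline{v}_n$ has degree $+1$ there — is already present for the pair $u_1=x/|x|$, $v_1=1$, and for that pair Theorem~\ref{th:smooth} shows that \emph{equality} holds in~\eqref{eq:77}. The winding of $u_n-v_n$ around a singularity of $w_0=u_0\overline v_0$ is exactly the information already encoded in the chain of inequalities in the proof of Theorem~\ref{th:lp}, via $|u_n-v_n|=|w_n-1|$ and the composition with $T$; it therefore cannot, by itself, produce a strictly positive lower bound on the defect $E_n$. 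In fact, wherever $\arg(u_n-v_n)$ is forced to wind because $w_n$ winds, the modulus $|u_n-v_n|=|w_n-1|$ can be made to pass through $0$, so the extra term $|f|^2|\nabla\arg f|^2$ that $E_n$ controls need not be bounded below.

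What is actually needed, and what the paper exploits, is a singularity \emph{shared} by $u_0$ and $v_0$ that cancels in $w_0=u_0\overline{v}_0$: there $w_n$ has degree $0$, so the chain of inequalities sees nothing, while $u_n$ and $v_n$ still both wind nontrivially, and the Claim in Step~3 of the paper's proof forces extra energy from $u_n-v_n$. The paper takes a dumbbell domain $\Omega_\varepsilon$ (three discs joined by thin tubes), giving $u_0,v_0$ matching degree $+1$ singularities at $a_\pm$ so that (i) they cancel in $w_0=x/|x|$, and (ii) the thin tubes force the minimizer $W_\varepsilon$ of $\|\nabla w\|_{L^p}$ to be nearly constant and bounded away from $1$ on $B_\pm$, guaranteeing $|u_n-v_n|=|w_n-1|\gtrsim 1$ there in the limit. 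Both ingredients are absent in your unit-disc example, where the two singularities of $u_0$ and $v_0$ are at distinct points and neither is shared; whether strict inequality even holds for that pair appears to be open (cf.\ the remark following Theorem~\ref{th:smooth}). If you want to salvage the idea, the essential change is to make $u_0$ and $v_0$ share a singularity that disappears in the quotient, and to engineer the domain so that the quotient's minimizer is far from $1$ near that shared singularity.
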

\begin{proof}
 The construction resembles the one used in the proof of
 \cite[Proposition~4.1]{rs} (for a multiply connected domain and $p=2$),
 but the details of the proof are quite different. \\[2mm]
{\it Step 1.} Definition of $\Omega_\varepsilon$ and $u_0,v_0$\\
  Consider the three unit discs with centers at the points
  $a_{-}:=(-3,0),a:=(0,0)$ and $a_{+}:=(3,0)$, respectively: 
\bes
B_{-}:=B(a_{-},1),\ B:=B(a,1)\ \text{ and
}B_{+}:=B(a_{+},1).
\ees
For a small $\varepsilon\in(0,1/4)$, to be determined  later, define
the domain $\Omega_\varepsilon$ by
\begin{equation}
  \label{eq:Om-eps}
  \Omega_\varepsilon:=B_{-}\cup B\cup B_{+}\cup\{(x_1,x_2);\,
  x_1\in(-3,3), x_2\in(-\varepsilon,\varepsilon)\}.
\end{equation}

\begin{figure}[htbp] \label{fig:Omega}
 \begin{center}
 \scalebox{0.6}{\includegraphics{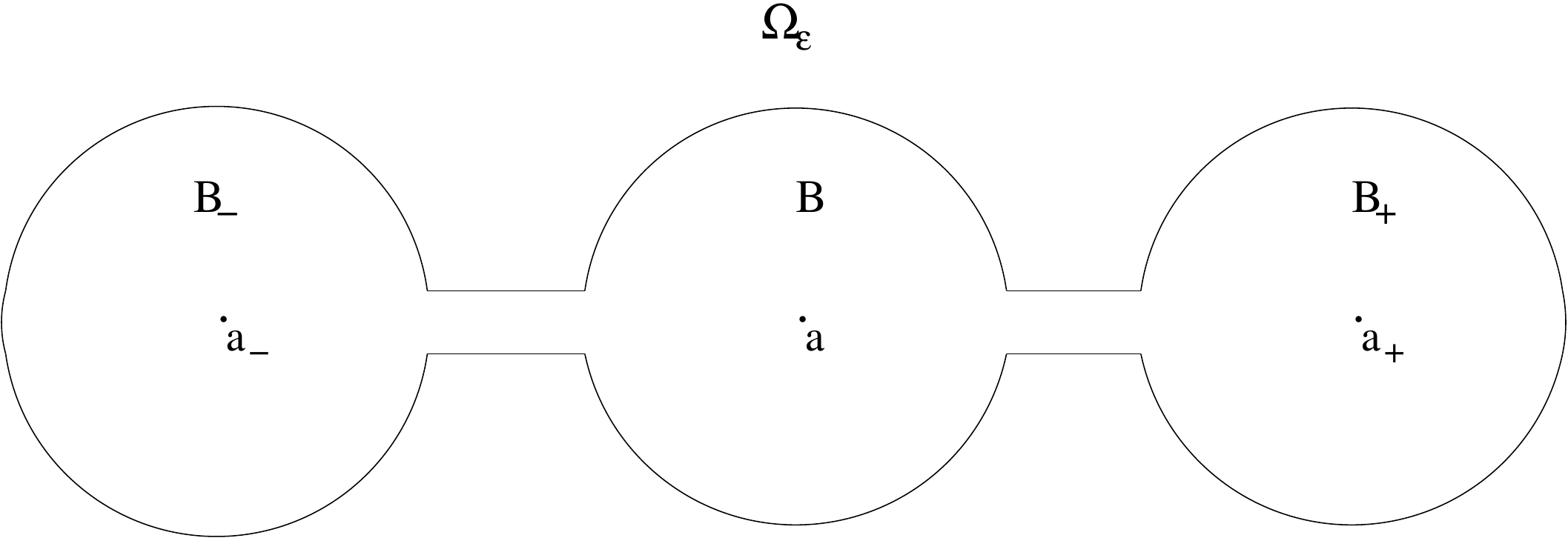}}
 \end{center}
 \caption{The domain $\Omega_\varepsilon$ (before smoothing)}
 \end{figure}
Hence, $B$ is connected to $B_{-}$ and $B_{+}$  by two narrow tubes (see
Figure~1). We can enlarge $\Omega_\varepsilon$ slightly near the
\enquote{corners}=the contact points of the tubes with the circles, in order
to have a smooth $\Omega_\varepsilon$. But we do it keeping the
following property:
\begin{equation}
  \label{eq:sym}
 \Omega_\varepsilon\text{ is symmetric with respect to reflections in both the
   $x$ and $y$-axis.}
\end{equation}
For later use we denote by
$\Omega_{+}$ and $\Omega_{-}$ the two components of
$\Omega_\varepsilon\setminus\overline B$ (with $\Omega_+\subset\{ z;\,
\textrm{Re }z>0\}$).
 We define the maps $u_0,v_0\in \bigcap_{1\leq p<2}\wop(\Omega_\varepsilon;\so)$
by
\begin{equation}
  \label{eq:u0v0}
   u_0:=\left(\frac{x-a_{-}}{|x-a_{-}|}\right)\left(\frac{x}{|x|}\right)^2\left(\frac{x-a_{+}}{|x-a_{+}|}\right)
,~v_0:=\left(\frac{x-a_{-}}{|x-a_{-}|}\right)\left(\frac{x}{|x|}\right)\left(\frac{x-a_{+}}{|x-a_{+}|}\right),
\end{equation}
and then
\begin{equation}
  \label{eq:w0}
 w_0:=u_0\, \overline{v}_0=\frac{x}{|x|}.
\end{equation}
{\it Step 2.} Properties of energy minimizers in $\mathcal{E}(w_0)$\\
We denote by $W_\varepsilon\in \wop(\Omega_\varepsilon;\so)$ a map realizing the
minimum in 
\begin{equation}
  \label{eq:33}
  S_\varepsilon=\inf_{w\sim w_0} \|\nabla w\|_{L^p(\Omega_\varepsilon)}.
\end{equation}
 Note that the minimizer $W_\varepsilon$ is unique, up to 
 multiplication by a complex constant of modulo one. This follows from the
 strict convexity of the functional:
 \begin{equation}
\label{eq:F-Funct}
   F(\varphi)=\int_{\Omega_\varepsilon}\left|\nabla\left(e^{\im\varphi}\frac{x}{|x|}\right)\right|^p \text{
     over }\varphi\in\wop(\Omega_\varepsilon;\R).
 \end{equation}
We next claim that
 \begin{equation}
   \label{eq:23}
   \int_{B} \left|\nabla\left(\frac{x}{|x|}\right)\right|^p\leq
   \int_{B} |\nabla W_\varepsilon|^p\leq\int_{\Omega_\varepsilon}|\nabla W_\varepsilon|^p\leq \int_{B} \left|\nabla\left(\frac{x}{|x|}\right)\right|^p+C\varepsilon^2,
 \end{equation}
 for some constant $C$. Here and in the sequel we denote by $C$
 different constants that are independent of $\varepsilon$ and $p$.
 Indeed, the first inequality in \eqref{eq:23} is clear since the restriction of 
 ${W_\varepsilon}$ to $B$ belongs to the class of $x/|x|$ in $B$,
 and the latter map is a minimizer of the energy in its class (see Remark~\ref{bn10} below). For
 the proof of the last inequality in \eqref{eq:23} it suffices to
 construct a comparison map ${\widetilde w}\in\mathcal{E}(w_0)$ as
 follows. We first set ${\widetilde w}=x/|x|$ in $B$. Then extend it to
 $\Omega_{+}\cap \{x_1\leq 1+\varepsilon\}$ in such a way that $\widetilde w\equiv
 \zeta$ (for some constant $\zeta\in\so$) on $\Omega_{+}\cap
 \{x_1=1+\varepsilon\}$. Such an extension can be constructed with
 $\|\nabla\widetilde w\|_{L^\infty}\leq C$, whence
 \begin{equation*}
   \int_{\Omega_{+}\cap \{x_1<1+\varepsilon\}}|\nabla{\widetilde w}|^p\leq C\varepsilon^2.
 \end{equation*}
In the remaining part of $\Omega_{+}$, namely $\Omega_{+}\cap \{x_1>
1+\varepsilon\}$ we simply set $\widetilde w\equiv \zeta$. We use
a similar construction for $\widetilde w$ on $\Omega_{-}$, and this
completes the proof of \eqref{eq:23}.

We shall also use a certain symmetry property of $W_\varepsilon$. We claim that:
\begin{equation}
  \label{eq:38}
  W_\varepsilon(x)=-W_\varepsilon(-x)~\text{ in }\Omega_\varepsilon.
\end{equation}
Indeed,
 since $W_\varepsilon(-x)$ is also a minimizer in \eqref{eq:33}, we must have
 \begin{equation}
  \label{eq:855}
   W_\varepsilon(-x)=e^{\im\alpha}\, W_\varepsilon(x)\ \text{for
     some constant }\alpha\in\R.
 \end{equation}
Write
\begin{equation}
  \label{eq:866}
W_\varepsilon=e^{\im\Psi_\varepsilon}\, \left(\frac{x}{|x|}\right),\
\text{with }\Psi_\varepsilon\in\wop(\Omega_\varepsilon;\R).
\end{equation}
 Plugging \eqref{eq:866} in \eqref{eq:855} gives
 \begin{equation*}
   -e^{\im\Psi_\varepsilon(-x)}\, \left(\frac{x}{|x|}\right)=e^{\im\alpha}\, e^{\im\Psi_\varepsilon(x)}\, \left(\frac{x}{|x|}\right),
 \end{equation*}
 whence
 $e^{\im(\Psi_\varepsilon(-x)-\Psi_\varepsilon(x))}=-e^{\im\alpha}$.
It follows that $\Psi_\varepsilon(-x)-\Psi_\varepsilon(x)\equiv const$
in $\Omega_\varepsilon$. Since  $\Psi_\varepsilon(-x)-\Psi_\varepsilon(x)$ is odd, 
it follows
that the constant must be zero. Hence
$\Psi_\varepsilon(-x)=\Psi_\varepsilon(x)$ a.e.~in
$\Omega_\varepsilon$, $e^{\im\alpha}=-1$ and \eqref{eq:38} follows
from \eqref{eq:855}.

 The main property of $W_\varepsilon$ that we need is the following: there exists
 $\zeta_\varepsilon\in\so$ such that
 \begin{align}
\label{eq:66}
 |W_\varepsilon-\zeta_\varepsilon|&\leq c_0\, \varepsilon^{2/p} \text{ on
 }B_{+},\\
\label{eq:71}
|W_\varepsilon+\zeta_\varepsilon|&\leq c_0\, \varepsilon^{2/p} \text{ on
 }B_{-}.
\end{align}
 In order to verify \eqref{eq:66}--\eqref{eq:71} we first notice that we may write
 $W_\varepsilon=e^{\im\Phi_\varepsilon}$ in
 $\Omega_\varepsilon\cap\{x_1>1\}$. Using \eqref{eq:23} and Fubini
 Theorem we can find $t_\varepsilon\in(1,3/2)$ such that the
 segment
 $I_\varepsilon=\{(t_\varepsilon,x_2);\,x_2\in(-\varepsilon,\varepsilon)\}$
 satisfies
 \begin{equation*}
   \int_{I_\varepsilon}|\nabla\Phi_\varepsilon|^p=\int_{I_\varepsilon}|\nabla
   W_\varepsilon|^p\leq C\varepsilon^2.
 \end{equation*}
 By H\"older inequality it follows that
 $|\Phi_\varepsilon(z_1)-\Phi_\varepsilon(z_2)|\leq
 C\varepsilon^{2/p}$ for all $z_1,z_2\in I_\varepsilon$. Hence, there
 exists $\alpha_\varepsilon\in\R$ satisfying
 \begin{equation}
   \label{eq:49}
 |\Phi_\varepsilon(z)-\alpha_\varepsilon|\leq
 C\varepsilon^{2/p},\ \forall\, z\in I_\varepsilon.
 \end{equation}
 We claim that \eqref{eq:49} continues to hold in
 $G_\varepsilon:=\Omega_\varepsilon\cap\{x_1>t_\varepsilon\}$, i.e., 
 \begin{equation}
   \label{eq:58}
|\Phi_\varepsilon(x)-\alpha_\varepsilon|\leq
 C\varepsilon^{2/p},\ \forall\, x\in G_\varepsilon.
 \end{equation}
 Indeed, defining
 \begin{equation*}
   {\widetilde\Phi}_\varepsilon(x):=\max\left(\alpha_\varepsilon-C\varepsilon^{2/p},\min(\Phi_\varepsilon(x),\alpha_\varepsilon+C\varepsilon^{2/p})\right),
 \end{equation*}
and then ${\widetilde W}_\varepsilon:=e^{\im\Phi_\varepsilon}$, we
clearly have $\int_{G_\varepsilon}|\nabla{\widetilde
  W}_\varepsilon|^p\leq\int_{G_\varepsilon}|\nabla W_\varepsilon|^p$,
with strict inequality, unless \eqref{eq:58} holds. Setting
$\zeta_\varepsilon:=e^{\im\alpha_\varepsilon}$, we deduce
\eqref{eq:66} from \eqref{eq:58}. Finally, using the symmetry
properties, \eqref{eq:sym} of $\Omega_\varepsilon$ and \eqref{eq:38}
of $\Psi_\varepsilon$, we easily deduce \eqref{eq:71} from
\eqref{eq:66}.\\[2mm]
{\it Step 3.} A basic estimate for maps in $\wop(\so;\so)$\\
The following claim provides a simple estimate which is essential for
the proof. The case
$p=2$ was proved in \cite[Lemma~4.1]{rs} and the generalization to any
$p\geq1$ is straightforward. We include the proof for the convenience
of the reader.
\begin{claim*}
 \label{claim:mindd}
 For any $p\geq1$, let  $f,g\in\wop(\so;\so)$ satisfy: 
\bes
\deg f=\deg g=k\neq 0\text{ and }|(f-g)(\zeta)|=\eta>0,
\ees
 for some point $\zeta\in\so$.
 Then,
\begin{equation}
\label{eq:duv}
\int_{\so}|\dot{f}-\dot{g}|^p\geq \frac{2\eta^p}{\pi^{p-1}}.
\end{equation}
\end{claim*}
\begin{proof}[Proof of Claim]
  Set $w:=f-g=w_1+\im w_2$. We may assume without loss of generality that $w(1)=(f-g)(1)=\eta\, \im$.
 Since $\deg(g)\neq 0$,  there exists a point $\theta_1\in(0,2\pi)$
 such that $g(e^{\im\theta_1})=\im$, whence
 $w_2(e^{\im\theta_1})=-t\im$ for some $t\geq0$.
H\"older's inequality, and a straightforward computation yield  
\bes
 \int_{\so} |w'|^p\geq \int_{\so}|w'_2|^p\geq \frac{(\eta+t)^p}{\theta_1^{p-1}}+\frac{(\eta+t)^p}{(2\pi-\theta_1)^{p-1}}
 \geq 2\frac{(\eta+t)^p}{\pi^{p-1}}>\frac{2\eta^p}{\pi^{p-1}},
\ees
 and \eqref{eq:duv} follows.
\end{proof}
\begin{remark}
  We thank an anonymous referee for suggesting a simplification of our
  original argument for the proof of the Claim, and for pointing out
  that it holds under the weaker assumption:
  either $f$ or $g$ has a nontrivial degree.
\end{remark}
\noindent
{\it Step 4.} Conclusion\\
Consider two sequences $\{u_n\}\subset\mathcal{E}(u_0)$ and
$\{v_n\}\subset\mathcal{E}(v_0)$ such that
\begin{equation}
\label{eq:75}
  \lim_{n\to\infty}\|\nabla(u_n-v_n)\|_{L^p(\Omega_\varepsilon)}=\dist_{\wop}(\mathcal{E}(u_0),\mathcal{E}(v_0)).
\end{equation}
By a standard density argument we may assume that
$u_n,v_n\in
C^\infty(\overline{\Omega}_\varepsilon\setminus\{a_{-},a,a_{+}\})$ for
all $n$.
 Assume by contradiction that
 \begin{equation}
   \label{eq:48}
  \dist_{\wop}(\mathcal{E}(u_0),\mathcal{E}(v_0))
  =\left(\frac{2}{\pi}\right)S_\varepsilon=\left(\frac{2}{\pi}\right)\min_{w\sim
    w_0} \|\nabla w\|_{L^p(\Omega_\varepsilon)}~\text{(see \eqref{eq:33})}.
 \end{equation}
By \eqref{eq:48} and \eqref{eq:23} there exists a constant $C_0$ such
that
\begin{equation}
  \label{eq:for-cont}
  \int_{\Omega_\varepsilon}|\nabla(u_n-v_n)|^p\leq \left(\frac{2}{\pi}\right)^p\int_{B}
  \left|\nabla\left(\frac{x}{|x|}\right)\right|^p+C_0\varepsilon^2,\
  \forall\,  n\geq n_0(\varepsilon).
\end{equation}

Set $w_n:=u_n\, {\overline v_n}$ and note that by the same computation as
in \eqref{eq:90}--\eqref{eq:89} we have
\begin{equation}
\label{eq:80}
  \int_{\Omega_\varepsilon}|\nabla(u_n-v_n)|^p\geq \int_{\Omega_\varepsilon}|\nabla(|u_n-v_n|)|^p=\left(\frac{2}{\pi}\right)^p
\int_{\Omega_\varepsilon}|\nabla(T\circ w_n)|^p\geq \left(\frac{2}{\pi}\right)^pS^p_\varepsilon
\end{equation}
 (recall that $T$ is defined in \eqref{eq:41}). Combining
 \eqref{eq:75},\eqref{eq:48} and \eqref{eq:80} yields that
 $\wtw_n:=T\circ w_n$ satisfies
 \begin{equation*}
   \lim_{n\to\infty}\|\nabla \wtw_n\|_{L^p(\Omega_\varepsilon)}=S_\varepsilon,
 \end{equation*}
and up to passing to a subsequence we have
\begin{equation}
  \label{eq:84}
W_\varepsilon=\lim_{n\to\infty} \wtw_n~\text{ in }\wop(\Omega;\so),
\end{equation}
 where $W_\varepsilon$ is a minimizer in \eqref{eq:33}. 
 Recall that $W_\varepsilon$ is unique up to rotations; the particular
 $W_\varepsilon$ in \eqref{eq:84} is chosen by the subsequence. For
 any $\zeta\in\so$ we have
 $\max(|\zeta-1|,|\zeta+1|)\geq\sqrt{2}$. In
 particular, for
 $\zeta_\varepsilon$ associated with $W_\varepsilon$ (see
 \eqref{eq:66}--\eqref{eq:71}) 
 we may assume without loss of generality that
 \begin{equation}
     \label{eq:85}
 |\zeta_\varepsilon-1|\geq\sqrt{2}.
 \end{equation}
 By  \eqref{eq:84} and Egorov Theorem there exists
 $A_\varepsilon\subset\Omega_\varepsilon$ satisfying
 \begin{equation}
   \label{eq:Egorov}
 |A_\varepsilon|\leq\varepsilon~\text{ and }\wtw_n\to W_\varepsilon\text{
   uniformly on }\Omega_\varepsilon\setminus A_\varepsilon, 
 \end{equation}
 again, after passing to a subsequence.
Combining \eqref{eq:Egorov} with \eqref{eq:85} and \eqref{eq:66} yields
\begin{equation*}
 |\wtw_n-1|\geq \sqrt{2}-2c_0\, \varepsilon\ \text{ on
 }B_{+}\setminus A_\varepsilon,\ \fo n\geq n_1(\varepsilon),
\end{equation*}
and choosing $\varepsilon<(\sqrt{2}-1)/(2c_0)$ guarantees that
\begin{equation}
  \label{eq:86}
 |\wtw_n-1|\geq 1\ \text{ on
 }B_{+}\setminus A_\varepsilon,\ \fo n\geq n_1(\varepsilon).
\end{equation}
Going back to the definition of $T$ in \eqref{eq:41}, we find by a
simple computation the following equivalences for
$e^{\im\theta}=T(e^{\im\varphi})$ (with $\theta\in(-\pi,\pi)$):
\begin{equation}
  \label{eq:93}
 |T(e^{\im\varphi})-1|\geq
 1~\Longleftrightarrow~|\theta|=\pi|\sin(\varphi/2)|\geq\pi/3~\Longleftrightarrow~|e^{\im\varphi}-1|=2|\sin(\varphi/2)|\geq 2/3.
\end{equation}
Using \eqref{eq:93} we may rewrite \eqref{eq:86} in terms of the
original sequence $\{w_n\}$:
\begin{equation}
  \label{eq:94}
 |u_n-v_n|= |w_n-1|\geq 2/3~\text{ on
 }B_{+}\setminus A_\varepsilon,\ \fo  n\geq n_1(\varepsilon).
\end{equation}
Consider the set
\begin{equation}
\label{eq:91}
  \Lambda_\varepsilon=\{r\in(1/2,1);\,\partial B(a_{+},r)\subset A_\varepsilon\}.
\end{equation}
By \eqref{eq:Egorov} we clearly have $\varepsilon\geq
|A_\varepsilon|\geq (1/2)|\Lambda_\varepsilon|\cdot2\pi$, whence
\begin{equation}
  \label{eq:92}
 |\Lambda_\varepsilon|\leq \frac{\varepsilon}{\pi}.
\end{equation}
For $n\geq n_1(\varepsilon)$ we have: on each circle $\partial B(a_{+},r)$ with $r\in
(1/2,1)\setminus\Lambda_\varepsilon$ there exists at least one point
where $|u_n-v_n|\geq2/3$. Thus we may apply the Claim from Step 3 with $\eta:=2/3$, $f:=u_n\big|_{\partial B(a_{+},r)}$ and
$g:=v_n\big|_{\partial B(a_{+},r)}$ to obtain by \eqref{eq:duv} (after a suitable rescaling):
\begin{equation}
  \label{eq:95}
 \int_{\partial B(a_{+},r)}|\nabla(u_n-v_n)|^p\geq 2(r\pi)^{1-p}\left(\frac{2}{3}\right)^p\geq(2\pi)\left(\frac{2}{3\pi}\right)^p:=\gamma_p.
\end{equation}
Integrating \eqref{eq:95}, taking into account \eqref{eq:92},  yields
\begin{equation}
  \label{eq:96}
 \int_{B_{+}}|\nabla(u_n-v_n)|^p\geq
 \int_{(1/2,1)\setminus\Lambda_\varepsilon} \int_{\partial
   B(a_{+},r)}|\nabla(u_n-v_n)|^p\geq (1/2-\varepsilon/\pi)\gamma_p.
\end{equation}
In addition, by \eqref{eq:77}, applied to $u_n\big|_B, v_n\big|_B$, we clearly have
\bes\int_{B}|\nabla(u_n-v_n)|^p\geq\left(\frac{2}{\pi}\right)^p\int_{B}\left|\nabla\left(\frac{x}{|x|}\right)\right|^p,\ees
which together with \eqref{eq:96} gives 
\begin{equation}
  \label{eq:97}
 \int_{\Omega_\varepsilon}|\nabla(u_n-v_n)|^p\geq\left(\frac{2}{\pi}\right)^p\int_{B}\left|\nabla\left(\frac{x}{|x|}\right)\right|^p+
 (1/2-\varepsilon/\pi)\gamma_p,\  \fo  n\geq n_1(\varepsilon).
\end{equation}
The inequality \eqref{eq:97} clearly contradicts \eqref{eq:for-cont}
for $n$ large enough
if $\varepsilon$ is chosen sufficiently small.
\end{proof}
\begin{remark}
\label{bn10}
In the course of the proof of \rprop{prop:strict} we used the following fact:\\[2mm]
Let $1\le p<2$ and let $\Omega$ be the unit disc. Set $u_0(x):=x/|x|$, $\fo x\in\Omega$. Then 
\be
\label{bn11}
\int_\Omega |\na u|^p\ge \int_\Omega |\na u_0|^p ,\ \fo u\in{\cal{E}}(u_0).
\ee
We sketch the proof for the convenience of the reader.\\[2mm]
Let $u\in{\cal{E}}(u_0)$. Let $C_r:=\{ z;\, |z|=r\}$. Since we may write $u=e^{\im\va}\, u_0$, with $\va\in W^{1,p}$, for a.e. $r\in (0,1)$ we have $u\big|_{C_r}\in W^{1,p}(C_r;\so)$ and $\d\deg \left(u\big|_{C_r}\right)=1$. This implies that for a.e. $r\in (0,1)$ we have
\be
\label{bn12}
\int_{C_r}|\na u|\ge \int_{C_r}\left|\dot u\right|=\int_{C_r}\left|u\wedge\dot u\right|\ge \int_{C_r} u\wedge\dot u=2\pi=\int_{C_r}u_0\wedge\dot {u}_0=\int_{C_r}|\na u_0|.
\ee
In case $p=1$ integration over $r\in(0,1)$ of \eqref{bn12} yields
\eqref{bn11}. In case $1<p<2$ we use
\eqref{bn12} and H\"older inequality, and then integration over $r$ yields
\bes
\int_\Omega |\na u|^p\ge
2\pi\int_0^1\frac{dr}{r^{p-1}}=\frac{2\pi}{2-p}=\int_\Omega |\na
u_0|^p, \text{ and \eqref{bn11} follows.}
\ees
\\[2mm]
Examining  the equality cases for the inequalities in \eqref{bn12}
(and in H\"older inequality when $1<p<2$) we obtain in addition the
following conclusion: equality holds in \eqref{bn11} if and only if\\[2mm]
(i) for $1<p<2$, $u=e^{\im\alpha}u_0$  for some constant $\alpha$;\\
(ii) for $p=1$, $u(re^{\im\theta})=e^{i\varphi(\theta)}$ where
$\varphi\in W^{1,1}([0,2\pi];\R)$ satisfies
$\varphi(2\pi)-\varphi(0)=2\pi$ and $\varphi'\geq0$ a.e.~on
$[0,2\pi]$.\\
The difference between  (i) and (ii) is the main
reason why for the same $u_0$ and $v_0$ as in the proof of
\rprop{prop:strict}, we have  the strict inequality 
\eqref{eq:strict} for $1<p<2$, while for $p=1$  the equality \eqref{eq:19a} holds.
\end{remark}
\begin{remark}
 Consider the maps $u_1:=x/|x|$ and $v_1:=1$ in $\Omega_\varepsilon$ (as in the proof of
 \rprop{prop:strict}).
 By \rth{th:smooth} we have
 $\dist_{\wop}(\mathcal{E}(u_1),\mathcal{E}(v_1))=(2/\pi)\inf_{w\sim
   u_1{\overline v}_1}\|\nabla
 w\|_{L^p(\Omega_\varepsilon)}$. Therefore, we have
 \begin{equation*}
   \dist_{\wop}(\mathcal{E}(u_1),\mathcal{E}(v_1))\neq \dist_{\wop}(\mathcal{E}(u_0),\mathcal{E}(v_0))
 \end{equation*}
 although $u_1{\overline v}_1=u_0{\overline v}_0$. This shows that in
 general it is not even true that
 $\dist_{\wop}(\mathcal{E}(u),\mathcal{E}(v))$ depends only on
 $\mathcal{E}(u{\overline v})$ when $1<p<2$. A similar phenomenon
 occurs when $\Omega$ is multiply connected and $p=2$ (see \cite[Remark~4.1]{rs});
  a comparable argument works for $p>2$.
\end{remark}

\section*{Appendix. Proof of \rprop{prop:dipole}}
 \setcounter{equation}{0}
 \renewcommand{\theequation}{\Alph{section}.\arabic{equation}}
  \setcounter{section}{1}
  \renewcommand{\thelemma}{A}
 \begin{proof}[Proof of \rprop{prop:dipole}]
   We fix a sequence $\varepsilon_n\searrow0$ and use \eqref{eq:1*} to
   find a sequence $\{v_n\}\subset\mathcal{E}(u)$ such that
   \begin{equation}
     \label{eq:109}
     \int_\Omega |\nabla v_n|\leq
     \Sigma(u)+\varepsilon_n,\ \forall\, n.
   \end{equation}
For $\theta\in[0,2\pi)$ define $\Psi_{n,\theta}\in\text{Lip}(\so;\so)$ by
\begin{equation}
\label{eq:110}
 \Psi_{n,\theta}(z):=\begin{cases}
 e^{\im\pi (1+2(\varphi-\theta)/\varepsilon_n)},&\text{if } z=e^{\im\varphi}\in \mathcal{A}(\theta-\varepsilon_n/2,\theta+\varepsilon_n/2)\\
1, &\text{if } z\notin \mathcal{A}(\theta-\varepsilon_n/2,\theta+\varepsilon_n/2)
\end{cases}.
\end{equation}
 Clearly $\deg \Psi_{n,\theta}=1$, so setting
 $w_{n,\theta}:=\Psi_{n,\theta}\circ v_n$, we have, by \rlemma{lem:T},
 $w_{n,\theta}\sim v_n\sim u$. Moreover,
 \begin{equation}
\label{eq:111}
  |\nabla w_{n,\theta}(x)|=\begin{cases}
(2\pi/\varepsilon_n)\, |\nabla v_n(x)|,&\text{if } v_n(x)\in\mathcal{A}(\theta-\varepsilon_n/2,\theta+\varepsilon_n/2)\\
0,  &\text{if } v_n(x)\notin\mathcal{A}(\theta-\varepsilon_n/2,\theta+\varepsilon_n/2)
\end{cases}.
\end{equation}

Set $A_{n}(x):=\{\theta\in[0,2\pi);
  v_n(x)\in\mathcal{A}(\theta-\varepsilon_n/2,\theta+\varepsilon_n/2)\}$.
    We have
\begin{equation}
\label{eq:112}
  \int_0^{2\pi} \int_\Omega |\nabla
  w_{n,\theta}|\,dx\,d\theta=\frac{2\pi}{\varepsilon_n}\int_\Omega|\nabla
  v_n(x)|\,|A_{n}(x)|\,d
x=2\pi\int_\Omega |\nabla v_n|,
\end{equation}
and
\begin{equation}
  \label{eq:113}
\int_0^{2\pi} |\{w_{n,\theta}\neq 1\}|\,d\theta=\int_\Omega |A_n(x)|\,dx=\varepsilon_n\, |\Omega|.
\end{equation}
Combining \eqref{eq:109} with \eqref{eq:112}--\eqref{eq:113} yields
\begin{equation}
  \label{eq:114}
\int_0^{2\pi} \left(|\{w_{n,\theta}\neq
1\}|/{\varepsilon}^{1/2}_n+\int_\Omega|\nabla w_{n,\theta} |\right)\,d\theta\leq |\Omega|\, {\varepsilon}^{1/2}_n+2\pi(\Sigma(u)+\varepsilon_n). 
\end{equation}
From \eqref{eq:114} it follows that there exists $\theta_n\in[0,2\pi)$
such that
\begin{equation*}
  |\{w_{n,\theta_n}\neq
1\}|/{\varepsilon}^{1/2}_n+\int_\Omega|\nabla w_{n,\theta_n} |\leq |\Omega|\, {\varepsilon}^{1/2}_n/(2\pi)+\Sigma(u)+\varepsilon_n;
\end{equation*}
 so clearly a subsequence of $u_n:=w_{n,\theta_n}$ satisfies \eqref{eq:dipole}.

 \end{proof}

\end{document}